\newtheorem{thm}{Theorem}[section]
\newtheorem{lem}[thm]{Lemma}
\newtheorem{prop}[thm]{Proposition}
\theoremstyle{definition}
\newtheorem{defn}[thm]{Definition}
\theoremstyle{remark}
\newtheorem{rem}[thm]{Remark}
\numberwithin{equation}{section}
\newcommand{\norm}[1]{\left\Vert#1\right\Vert}
\newcommand{\abs}[1]{\left\vert#1\right\vert}
\newcommand{\set}[1]{\left\{#1\right\}}
\newcommand{\Real}{\mathbb R}
\newcommand{\Epc}{\mathbb E\,}
\newcommand{\eps}{\varepsilon}
\newcommand{\R}{\mathbb{R}}
\newcommand{\Rd}{\mathbb{R}^d}
\newcommand{\alp}{\alpha}
\newcommand{\sign}{\text{ sign }}
\newcommand{\supp}{\text{ supp }}
\newcommand{\tr}{\text{ tr}}
\DeclareMathOperator{\essinf}{ess\,inf\,}
\DeclareMathOperator{\esssup}{ess\,sup\,}
\def\l@subsection{\@tocline{2}{0pt}{2.5pc}{5pc}{}}
\begin{document}

\title[$L^1$ semigroup generation for Fokker-Planck operators]
{$L^1$ semigroup generation for Fokker-Planck operators associated with
  general L\'evy driven SDEs}

\author[L. Chen]{Linghua Chen}%
\address{Norwegian University of Science and Technology, NO-7491, Trondheim, Norway}%
\email{linghua.chen@ntnu.no}%

\author[E. R. Jakobsen]{Espen R. Jakobsen}%
\address{Norwegian University of Science and Technology, NO-7491, Trondheim, Norway}%
\email{espen.jakobsen@ntnu.no}%


\thanks{E. R. Jakobsen is supported by the Toppforsk (research
  excellence) project Waves and Nonlinear Phenomena (WaNP), grant
  no. 250070 from the Research Council of Norway.}%
\subjclass{
  47D06,  
  47D07,  
  47G20,  
  35K10,  
  60H10,  
  60G51}  
\keywords{Semigroup generation, elliptic regularity, nonlocal operators, Fokker-Planck equation, Kolmogorov forward equation,
  stochastic differential equation, L\'evy process}%

\date{\today}%
\makeatletter
\def\l@subsection{\@tocline{2}{0pt}{2.5pc}{5pc}{}}
\makeatother

\begin{abstract}
  We prove a new generation result in $L^1$ for a large class of non-local
    operators with non-degenerate local terms. This class contains the
    operators appearing in Fokker-Planck or Kolmogorov forward
    equations associated with L\'evy driven SDEs,~i.e.~the adjoint
    operators of the infinitesimal generators of these SDEs.
    As a byproduct, we also
    obtain a new elliptic regularity result of independent interest.
    The main novelty in
    this paper is that we can consider very general L\'evy operators,
    including state-space depending coefficients with linear growth and general
    L\'evy measures which can be singular and have fat
    tails.
\end{abstract}
\maketitle



\section{Introduction}\label{sec intro}
In this paper we prove an $L^1$ generation result for Fokker-Planck (FP)
or 
Kolmogorov forward operators associated to autonomous L\'evy driven
SDEs. In their most general form such SDEs can be written as (cf. \cite{GS1972,
applebaum2004levy, tankov2003financial, Oksendal2007})
\begin{align}\label{eq sde levy}
        dY_t =&\ b(Y_{t-})dt + \sigma(Y_{t-})dB_t\\ & + \int_{{|z|<1}}
        p(Y_{t-},z)\tilde N(dz,dt) +\int_{{|z|\geq 1}} p(Y_{t-},z)N(dz,dt),\nonumber
\end{align}
where $b: \Real^d \to \Real^d$, $\sigma: \Real^d \to \Real^{d\times
  n}$, $p:\Rd\times\Rd \to \R^{d\times m}$, $B_t$ is a $n$-dimensional Brownian
motion,  and $N$ and $\tilde N$ are
 $m$-dimensional Poisson and compensated Poisson random
measures, respectively. Under suitable assumptions
(cf. \cite{protter2005stochastic}), the solution $Y_t$ of \eqref{eq sde levy}
is a Markov process with infinitesimal generator $L^*$,
\begin{align}\label{eq def of L*}
  L^*f(y)= & \sum_{i=1}^d b_i(y) \partial_i f(y)
    +\frac{1}{2}\sum_{i,j=1}^d a_{ij}(y) \partial_{i}\partial_j f(y)\\
    \notag & + \sum_{k=1}^m \int_{{|z|<1}}
        \big[f(y+p_k(y,z))-f(y)-D f(y) p_k(y,z)\big]\nu_k(dz)\\
    \notag & + \sum_{k=1}^m \int_{{|z|\geq 1}}\big[f(y+p_k(y,z))-f(y)\big]\nu_k(dz),
\end{align}
where $a:=\sigma\sigma^T$, $\nu(dz)dt:=\Epc N(dz,dt)$. For convenience
we assign $\nu(\set{0})=0$, $p=(p_1,\dots,p_m)$, and $\nu=(\nu_1,\dots,\nu_m)$.

In many cases, the process $Y_t$ admits a probability density function (PDF)
$u(t,x)$, a function $u\geq0$ such that 
$\Epc \phi(Y_t)=\int_{\Real^d} \phi(x) u(t,x)dx$ for all $\phi\in C_b(\Real^d)$.
Formally the PDF $u$ solves the Fokker-Planck or forward
Kolmogorov equation
\begin{equation}\label{eq FP}
        \partial_t u(t,x)  =L u(t,x),
\end{equation}
where $L$ is the adjoint of $L^*$. It is this operator $L$ that we
call the Focker-Planck operator.
%
 To be precise, we define $L$
on the domain $D(L):= C_c^{\infty}(\Real^d)$ as the adjoint operator of $L^*$
in \eqref{eq def of L*}, i.e.
\begin{align}
 \notag   L u(x) = &  \frac 12 \sum_{i,j=1}^d  \partial_{ij}\big(a_{ij} u(x)\big)
            - \text{div}\big(b(x)u(x)\big)+ \text{div}\Big(\int_{\set{r\leq |z|<1}}  p(x,z)\nu(dz)u(x)\Big)\\
\label{eq L}  & +\int_{{|z|<r}} \big[u(x-q(x,z))-u(x)+D u(x)q(x,z)\big]  m(x,z) \nu (dz)\\
    \notag & + D u(x)^T \int_{{|z|<r}}\big[p(x,z)-q(x,z) m(x,z)
    \big]\nu (dz) \\
    \notag & + u(x)  \int_{{|z|<r}}\big[ m(x,z) +\text{div}_x
      p(x,z)-1\big]\nu (dz)+ J_r u(x),
\end{align}
for $r>0$ small enough, \ $y(x,z) =x-p(y(x,z),z)=:x-q(x,z)$ \ and \ $m(x,z) :=\det
\big(D_x y(x,z)\big)$, and $J_r$ is the adjoint of
$J_r^* f(y) := \sum_{k=1}^m \int_{|z|\geq r}
[f(y+p_k(y,z))-f(y)]\nu_k(dz).$

To obtain $L^p$ or Sobolev space theories for
such complicated $x$-depending non-local operators, the literature resorts to the global invertibility assumption \cite{garroni2002second,Bally2014},
\begin{align}
    \label{eq global est of 1+p'x}
    0<C^{-1}
    \leq \det\left( 1_d  + D_y p(y,z) \right)\leq C\qquad\text{for
      all}\qquad y,z\in\R^d.
\end{align}
Such an assumption is crucial and e.g. allows one to show  (under some further
assumptions) that $Lu$ belongs to $L^p$ for any
$u\in C_c^\infty$ and $p\in[1,\infty]$, that $L$ is indeed the adjoint
of $L^*$, and that 
$J_r$ then takes the explicit form
(cf. Section 2.4 in \cite{garroni2002second})
\begin{align}\label{def Jr}
          J_r u(x)= &\int_{{|z|\geq r}} [u(x-q(x,z))m(x,z)-u(x)] \nu (dz).
\end{align}
But  assumption \eqref{eq global est of 1+p'x} is very restrictive
and excludes many applications,
including most $x$-depending cases of interest! One of the main
contributions of this paper is to show how it can be dropped
completely, even in the borderline $L^1$ setting.
We will see that we can still work with $L$ even though
e.g.~$J_r$ now will be defined through duality only,
without an explicit representation.

{\em The main result of this paper is that under quite general assumptions,
$L$ generates a strongly continuous contraction
  semigroup on $L^1(\Real^d)$.}

A standard consequence is then that there exists a
unique mild solution in  $L^1$ of the
Cauchy problem for \eqref{eq FP} \cite{engel2006one}, and under
further assumptions, one
can prove that this solution is the PDF of the process $Y_t$ \cite{BKRS,CNJLocal,LC2016}.
Here it is crucial that we work in the
space $L^1$ since PDFs by definition belong to this
space but  in general not to $L^p$ for any $p>1$.
An other application is the
convergence of approximations and numerical methods. Many such results
follow from Kato-Lie-Trotter or Chernoff formulas where the generation
result is a prerequisite \cite{CNJLocal, Butko2016}. In
\cite{CNJLocal} generation is the most difficult step of the proof,
and in many cases, our new generation result provides the generation result
needed in \cite{Butko2016}  (Assumption 6).

The assumptions of our generation result include a uniformly elliptic
local part,
unbounded coefficients with finite differentiability, and general
L\'evy measures (can be singular and have fat tails etc.). In particular the conditions on the
non-local part are very general, covering most jump models in applications
\cite{applebaum2004levy, Barndorff-Nielsen1997, tankov2003financial, schoutens2003levy}.
The restrictive assumption is mainly the uniform ellipticity,
which means the local part can not degenerate/vanish in any
direction. In the literature, such ellipticity or weaker
hypo-ellipticity are typically used to guarantee the existence of
(smooth) PDFs.

The main tools of the proofs are taken from semigroup
theory. We essentially use the Lumer-Phillips theorem to prove
the semigroup generation of dissipative operators in $L^1$.
This is not an easy task.
The difficulty arises not only from the space $L^1$ being
non-reflexive, as we have already encountered in the case without
jumps \cite{CNJLocal},
but also because of the complicated non-local terms
in the FP operator. 
Since we treat very general L\'evy  models and unbounded
coefficients, we can not use the standard global invertibility
assumption \eqref{eq global est of 1+p'x} and show
 semigroup generation directly.
In stead, our strategy is to write the operator as the sum of three
parts that we analyze separately:
the local part, the small jumps part, and the large jumps part.
Through a non-trivial extension of the analysis of \cite{CNJLocal}
(see below), we show that (the sum of) the two first parts generates a
strongly continuous contraction semigroup on
$L^1(\Real^d)$. The presence of the third part is new in this setting
and crucial for the analysis. We show that it is a bounded operator on
$L^1(\Real^d)$ and then treat it as a perturbation to the semigroup
generated by the sum of the other two parts.

Note that in this new approach,
no invertibility assumption is needed. This is true even
though we need invertibility to handle the small jumps term. But
since we have split of the large
jumps, we only need {\em local} invertibility now. By localizing as much as
we need (taking $r$ in \eqref{eq L} small enough), we observe that
 invertibility follows from a standard Lipschitz
assumption on $p$ (cf. Proposition \ref{prop Ir and Jr well def} (b) and proof).
For this argument to work, we also have to handle
the remaining large jumps term using only duality arguments.

A key next step in the generation argument is then to show that the first
and the second parts of the FP operator are dissipative in $L^1$ and that their
corresponding adjoints are dissipative in
$L^\infty$. Both results rely on the negativity of the
corresponding operators. In the $L^1$ setting it translates into
the inequality $\int_{\set{u\neq 0}}L|u|\, dx\leq0$ where
$L$ denotes the FP operator. The proof is technical and involve
separation and approximation of the domains $\{u>0\}$ and $\{u<0\}$
where $|u|$ is smooth. The non-local case is more difficult and requires
additional arguments because the
domains can no longer be separated as in the local case.
On the other hand, to show dissipativity of the adjoint in $L^\infty$,
we first prove that the maximal domain of the adjoint is contained in
certain Sobolev spaces. To this end, we obtain new elliptic regularity
results for non-local operators, extending recent local results in
\cite{ZhangBao2013}. In the local case,
dissipativity then follows from an argument using the Bony maximum
principle for Sobolev functions \cite{CNJLocal}. Here this argument is extended to our
non-local operators using additional ideas from \cite{GimbertLions1984}.

Our elliptic regularity
result is of independent interest: It applies to very general L\'evy
operators, operators with degenerate non-local parts, unbounded
and variable coefficients, and general L\'evy  measures.

Let us now briefly discuss the background setting of our problem.
Over the past decades, there has been a large number of publications in
the field of stochastic dynamics and its various application areas --
including physics, engineering, and finance. In these fields, the
response of dynamical systems to stochastic excitation is studied, and
the typical model is (a system of) stochastic differential equations
(SDEs). Traditionally, the driving noise has been Gaussian, but there
is a large and increasing number of applications that need more
general L\'evy  driving noise like e.g. anomalous diffusions in physics and
biology and advanced market models in finance and insurance
\cite{applebaum2004levy, Barndorff-Nielsen1997, tankov2003financial,
schoutens2003levy, kyprianou2006exotic, mikosch2009non}. A
common feature and difficulty of such models are that the corresponding
processes may have sudden jumps and hence discontinuous realizations
or sample paths.

Then we take a look the literature related to  the  semigroup
generation result. For local forward equations and SDEs driven by Brownian
motion, many classical generation results are given e.g. in
\cite{engel2006one}. More recent results for $L^1$ and
unbounded coefficients can be found in \cite{Fornaro2007747}.
Also for many non-local operators like fractional
Laplacian or generators of L\'evy  processes such generation results are classical,
see e.g. Theorem 3.4.2 in \cite{applebaum2004levy}. That book also gives
generation results in $C_0$ for more complicated generators of L\'evy
driven SDEs in Theorem 6.7.4.
When it comes to generation in $L^1$, we
have only been able to find one paper on non-local operators with
variable coefficients. Theorem 1.1 in \cite{Wang2013} gives such a
result for the operator $L=-(-\Delta)^{\alpha/2}+b(x)\cdot\nabla$.
Note well that these results do not apply to the FP
operator directly, but to its {\em adjoint}. In the local case, the regularity
of the coefficients allows us to rewrite the FP operator as an
adjoint operator plus a  (possibly unbounded) zero-order term. Hence
generation may follow from results for this augmented
``adjoint'' operator as discussed in
\cite{CNJLocal}. However, in the non-local case, this trick is not available
unless we assume also the very restrictive global invertibility
assumption \eqref{eq global est of 1+p'x}.

Generation results can also be obtained in a completely
  different way as a consequence of so-called heat kernal
  analysis. There the aim is
  to obtain sharp bounds on the heat kernals
  or transition probabilities $p(t,x;s,y)$ of the Markov
  process defined by \eqref{eq sde levy}. The semigroup $P_t$
  generated by $L$, can
  then be explicitly defined as $P_tf(x)=\int_{\R^d}\rho(t,x;0,y)f(y)dy$
  for suitable functions $f$.
  This research area dates back to \cite{Ar68}, and more
recently also includes jump processes and non-local operators (e.g. \cite{BL02,BJ07,CK10}).
We will focus on \cite{CHXZ17} which seems to have the most
general results that apply to L\'evy driven SDEs with variable coefficients.
The assumptions include uniform local ellipticity, ``bounded''
coefficients,  and a non-local part that satisfies some moment
condition and is comparable (from one side) to the fractional Laplacian.
In this case $P_t$ is
a strongly continuous contraction semigroup on $L^1$ (and $L^p$ for any
$\rho\in[1,\infty]$) by Theorem 1.1 (1), (2), (5), and (6) of
\cite{CHXZ17} and the application of standard arguments.

Compared with existing results, our generation result applies to FP
operators with much more general jump/non-local parts and
unbounded coefficients. Moreover, we do not use heat kernel analysis,
but rather a direct semigroup approach.

\subsection*{Outline.}
In Section \ref{subsec hypo} we state the assumptions and the main result.
Then we prove our main results in Section \ref{sec:pfm}.
In Section \ref{sec dspt of L} we prove that the generator of the
SDE and its adjoint are dissipative.
Many required properties of the non-local operators are obtained in
Section \ref{sec well-def of L}, including that the long jump part of
the operator is bounded on $L^1$. Finally, Section \ref{sec ell reg} is
 devoted to the proof of the  elliptic
regularity result.

\subsection*{Notation}\label{subsec lay and note}

The following notation will be used throughout the paper:
$\partial_t:= \frac{\partial}{\partial t}$,
$D=D_x:= \left(\frac{\partial}{\partial x_1},\cdots,\frac{\partial}{\partial x_d}\right)^T
            =:(\partial_1 ,\cdots,\partial_d )^T$,$\norm{\cdot}_1:= \norm{\cdot}_{L^1(\Real^d)}$,
$\norm{\cdot}_{\infty}:= \norm{\cdot}_{L^{\infty}(\Real^d)}$,
$\essinf$ is the
essential infimum, $\Epc$ denotes the mathematical expectation;
$1_d$ the identity matrix in $\R^{d\times d}$;
$C_b^k(\Rd)$ and
$C_c^\infty(\Rd)$ the spaces of functions with bounded continuous
derivatives up to $k$-th order and smooth compactly supported
functions, respectively;
$\mathcal D'(\Rd)$ the dual space of $C_c^\infty(\Rd)$.

The following abbreviations are used: PDF - probability density
function, SDE - stochastic differential equation,
FP - Fokker-Planck.

\section{Semigroup generation}
\label{subsec hypo}
In this section, we state the assumptions, our main result on semigroup
generation, a related elliptic regularity result, and remarks. Elliptic
regularity is needed for our proof of generation.
The properties of the operator $L$ and the proof of the generation
result will be given the next section.

We will use the following assumptions:
\begin{itemize}
 \item[\bf (H1)]
    $b\in C^1(\R^d,\R^d)$ and $\sigma\in C^2(\R^d,\R^{d\times n})$,
     and there exists a constant $K>0$ such that for all
    $x\in \Real^d$, $j=1,\dots,n$, and $j,k=1,\dots,d$,
    \[
        |\partial_k\sigma_{ij}(x)|+|\partial_k b_i(x)|  \leq K.
    \]\end{itemize}
For all $k=1,\cdots,m$,
\begin{itemize}
  \item[\bf(H2)] $p_k:\Rd\times \Rd \to \Rd$ is Borel
    measurable, $C^1$ in $y$, and for $\nu$-a.e. $|z|<1$,
    $p_k(\cdot,z)$ is $C^2$ in $y$ and
        \begin{align*}
          & \abs{p_k(y,z)}\leq K(1+|y|)|z|&&\text{for all }y\in\Rd,\\
          & \abs{D_y p_k(y,z)}\leq K|z|&&\text{for all }y\in\Rd,\\
           & \abs{D_y^2 p_k(y,z)}\leq C_R|z|&&\text{for all }|y|\leq R.
        \end{align*}
  \item[\bf(H3)] $\nu_k$ is a non-negative Radon measure satisfying
   \[
        \int_{\Rd} (1\wedge |z|^2)\nu_k(dz)<\infty.
   \]
\end{itemize}

We will also use the following more abstract assumption:
\begin{itemize}
    \item[\bf(E)] (Elliptic regularity)  Let $J_r^* f(y) :=
          \sum_{k=1}^m \int_{|z|\geq r} [f(y+p_k(y,z))-f(y)]\nu_k(dz)$,
          for some $r>0$ small enough. If
        \begin{equation}\label{eq distr sol}
            f,\,g\in
          L^\infty(\Rd)\qquad\text{and}\qquad (L^*-J_r^*)f=g \quad\text{in}\quad \mathcal D'(\Rd),
        \end{equation}
        then  $f\in W^{2,p}_{\text{loc}}(\R^d)$ for some $p>d$.
\end{itemize}

\begin{rem}
Any L\'evy measure $\nu_k$ and most
$p$'s from applications satisfy assumptions (H1) - (H3). E.g. the
$\alpha$-stable processes with $p(y,z)=z$ and $\nu(dz)=\frac{c_\alpha
  dz}{|z|^{d+\alpha}}$, $\alp\in(0,2)$. Unbounded $p$'s appear in
finance and insurance \cite{tankov2003financial, Benth2001OptPort,mikosch2009non, CaiYang2014}, e.g.  $p(y,z)=yz$ and
$p(y,z)=y(e^z-1)$.
The jump term $p$ is allowed
   to vanish on arbitrary large sets, and then the
   non-local part of the FP operator degenerates.
\end{rem}

Assumptions (H1) -- (H3) (except the $C^2$ regularity) are
standard assumptions for the existence and uniqueness of strong
solutions of L\'evy driven SDEs \eqref{eq sde levy}
\cite{applebaum2004levy, Oksendal2007}. They imply that the
coefficients may be unbounded in $y$ (with linear growth), and the
assumptions on the non-local operator are very general indeed: SDEs
with arbitrary L\'evy jump terms, even strongly degenerate ones, are
included. In particular, we do not require any invertibility of
$y+p_k(y,z)$ to define $L$ as the adjoint of the generator
$L^*$ like in \cite{garroni2002second,Bally2014} where the global
assumption \eqref{eq global est of 1+p'x} is used.
Note that this global condition is always satisfied when
$p$ does not depend on $y$, and that this paper is  probably the first
work on semigroup generation not to explicitly or implicitly assume
such a condition.

When it comes to assumption (E), it is most likely already satisfied under
assumptions (H1) -- (H3) if we assume also uniform ellipticity. See
e.g. \cite{CNJLocal} for local
operators. The general case seems not be covered in the literature, so
we will prove that (E) holds under ellipticity and mild additional
assumptions below.

Now we can state the main result of this paper:

\begin{thm}[Semigroup generation]\label{thm L gen s.g.}
  Assume (H1) -- (H3) and (E). Then the closure of $L$ generates a
  strongly continuous contraction semigroup on $L^1(\Real^d)$.
\end{thm}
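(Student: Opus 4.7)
The plan is to apply the Lumer--Phillips theorem in $L^1(\Rd)$ after splitting
\[
L = L_0 + J_r,
\]
where $L_0$ collects the local drift/diffusion part together with the small-jump non-local terms in \eqref{eq L} coming from $|z|<r$, and $J_r$ is the large-jump part. I would first verify that for $r$ small enough $J_r$ is a bounded operator on $L^1$, so that after showing $L_0$ (defined on $C_c^\infty(\Rd)$) is closable with closure generating a strongly continuous contraction semigroup on $L^1$, Phillips' bounded-perturbation theorem delivers the analogous statement for $L$ itself.

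Boundedness of $J_r$ is the easy half. By (H3), $\nu_k(\{|z|\ge r\})<\infty$ for any $r>0$, so
\[
|J_r^*\phi(y)|\;\le\;2\|\phi\|_\infty\sum_{k=1}^m\nu_k(\{|z|\ge r\}) =: C_r\|\phi\|_\infty,
\]
and consequently $J_r$, defined by duality via $\langle J_ru,\phi\rangle:=\langle u,J_r^*\phi\rangle$, extends to a bounded operator on $L^1$ with $\|J_ru\|_1\le C_r\|u\|_1$. No explicit representation \eqref{def Jr} and no global invertibility \eqref{eq global est of 1+p'x} are needed. For the small-jump terms living in $L_0$, (H2) gives $|D_yp_k(y,z)|\le K|z|<Kr$, so choosing $r<1/K$ makes $y\mapsto y+p_k(y,z)$ a global diffeomorphism for $|z|<r$, which legitimates the use of $m(x,z)$ and $q(x,z)$ in \eqref{eq L} (cf.\ Proposition~\ref{prop Ir and Jr well def}).

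The real work is verifying the Lumer--Phillips hypotheses for $L_0$ on the core $C_c^\infty(\Rd)$. $(\mathrm{i})$ Dissipativity of $L_0$ in $L^1$: for $u\in C_c^\infty$ one must show $\int_{\{u\ne 0\}} L_0|u|\,dx\le 0$. For the local part this follows from \cite{CNJLocal} by smoothing $|u|$ near $\{u=0\}$. For the small-jump non-local part, the sign regions $\{u>0\}$ and $\{u<0\}$ can no longer be separated (a small jump may cross $\{u=0\}$); instead I would use the pointwise convexity inequality $|u(x-q)|-|u(x)|\ge\mathrm{sgn}(u(x))(u(x-q)-u(x))$, combine with $D|u|(x)\cdot q=\mathrm{sgn}(u(x))Du(x)\cdot q$ on $\{u\ne 0\}$, integrate against $m(x,z)\,\nu(dz)$ and then over $x$, and exploit the compensator term to absorb the linear-in-$q$ contribution; a standard regularization of $\mathrm{sgn}$ handles the null set $\{u=0\}$ rigorously. $(\mathrm{ii})$ Dissipativity of $L_0^*$ in $L^\infty$: by assumption (E) every $f$ in the maximal domain of $L_0^*$ lies in $W^{2,p}_{\mathrm{loc}}$ with $p>d$, hence has a continuous representative with a.e.\ pointwise second derivatives. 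At a point $x_0$ of essential supremum of $f$, Bony's maximum principle (as in \cite{CNJLocal}) renders the local part of $L_0^*f$ non-positive a.e., while for the non-local integrand $f(y+p_k(y,z))-f(y)-Df(y)p_k(y,z)$ one adapts the integro-differential maximum principle of \cite{GimbertLions1984} to obtain the same sign.

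Combining $(\mathrm{i})$ and $(\mathrm{ii})$ with Lumer--Phillips shows that the closure of $L_0$ generates a $C_0$-contraction semigroup on $L^1(\Rd)$. Since $J_r$ is bounded on $L^1$ and itself dissipative (again by duality, using that $J_r^*$ attains a non-positive value at any point of maximum), the bounded perturbation theorem preserves the contraction property and yields the desired generation result for the closure of $L=L_0+J_r$. The main obstacle I anticipate is step $(\mathrm{i})$ for the small-jump non-local term: lacking the clean separation of sign regions from the local case, one must rely on convexity of $|\cdot|$ together with the compensator to balance the first-order term $D|u|\cdot q$; symmetrically, the non-local Bony-type argument of step $(\mathrm{ii})$ is delicate in the present generality of variable, unbounded coefficients and arbitrary L\'evy measures.
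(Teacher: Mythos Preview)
Your plan matches the paper's proof almost exactly: decompose $L=(A_r+I_r)+J_r$, prove $J_r$ is bounded and dissipative on $L^1$, prove $A_r+I_r$ and its adjoint are dissipative (the latter via (E) and a Bony-type maximum principle), apply Lumer--Phillips to $A_r+I_r$, then perturb by $J_r$.

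Two technical points your sketch glosses over and which the paper handles explicitly. First, duality alone gives $J_r:L^1\to (L^\infty)^*$, not $J_r:L^1\to L^1$; the paper shows the finitely additive measure representing $J_ru$ is in fact countably additive and then invokes Radon--Nikodym to land back in $L^1$ (Lemma~\ref{lem J_r u sig-add} and the proof of Proposition~\ref{prop Ir and Jr well def}(c)). Second, for the $L^\infty$ dissipativity of $L_0^*$ you invoke ``a point $x_0$ of essential supremum of $f$'', but for $f\in L^\infty(\Rd)$ such a point need not exist; the paper penalizes by $f_n=f-|y|^2/n$, picks near-maximizers $y_n$ with controlled growth, and uses (H1)--(H3) together with $|y_n|^2/n\to 0$ to kill the error terms (proof of Theorem~\ref{prop Ar+Ir and adj dspt}(b)). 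Your convexity idea for the $L^1$ dissipativity of the small-jump part is morally the same as the paper's Lemma~\ref{lem Iu vs Iw} (which records $S|u|=\text{sgn}(u)\,Su+R$ with $R\ge 0$), but note that the paper needs in addition the separate bound $\int_{\{u\neq 0\}} I_r|u|\le 0$ (Lemma~\ref{lem int Iw leq 0}), proved by truncating the singular integral and a Fatou argument; the compensator alone does not immediately close the estimate.
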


We now give results verifying assumption (E) under uniform ellipticity
and mild additional assumptions on the jump-terms:
\begin{itemize}
  \item[\bf(HE1)]
   There exists $\alpha>0$ such that for all $x, y\in \Real^d$,
    \[
        y^T a(x) y \geq \alpha |y|^2.
    \]
  \item[\bf (HE2)] There exists some $s\in [1,2)$ such that
    $\int_{|z|<1}|z|^s \nu_k(dz)<\infty$.

\end{itemize}
For all $k=1,\cdots,m$,
\begin{itemize}
      \item[\bf(H2')] (H2) holds, and $p_k(\cdot,z)$ is $C^3$ in $y$ for
        $\nu_k$-a.e. $z$, and there exists
        $\tilde p_k(z)\geq 0$ such that for all $R>0$, $|y|\leq R$,
        and $\nu_k$-a.e. $z$,
         $$\abs{p_k(y,z)}+\abs{D_y p_k(y,z)}+\abs{D_y^2 p_k(y,z)}+\abs{D_y^3 p_k(y,z)}
         \leq C_R\big( |z|\wedge \tilde p_k(z)\big).$$
   \item[\bf(H3')] (H3) holds, and
        $\tilde C:= \max_{k=1,\dots,m}\int_{|z|\geq 1} \tilde p_k(z) \nu_k(dz)<\infty$.
\end{itemize}

Under (HE2), $s<2$ is the maximal (pseudo) differential order of the
non-local part of the FP operator. Since the bound is only from above, the
L\'evy measures $\nu_k$ may be degenerate.

When the L\'evy measure is not too singular ($s=1$) we only need
(HE1) and (HE2). In the general case all assumptions are needed.
\begin{thm}[Elliptic regularity]\label{prop ell reg 01}
Assumption (E) holds if either one of the two sets of assumptions
below hold:
\vspace{-0.1cm}
\begin{itemize}
    \item[(a)] (H1), (H2), (H3), (HE1), and (HE2) with $s=1$.
    \medskip
    \item[(b)] (H1), (H2'), (H3'), (HE1), and (HE2) with $s\in (1,2)$.
\end{itemize}
\end{thm}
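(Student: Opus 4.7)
The plan is to view $\mathcal{L}:=L^*-J_r^*$ as an elliptic integro-differential operator whose principal (second-order) part is the local operator $L_0:=b\cdot D+\tfrac12 a:D^2$ and whose residual nonlocal part has pseudo-differential order strictly less than $2$, as enforced by (HE2). The verification of (E) would then be a bootstrap built on the local $W^{2,p}$ elliptic regularity of \cite{ZhangBao2013} applied to $L_0$. Concretely, I would decompose $(L^*-J_r^*)f = L_0 f + I_r f + D_r f$, where $I_r f(y):=\sum_k\int_{|z|<r}[f(y+p_k)-f(y)-Df(y)\cdot p_k]\,\nu_k(dz)$ collects the small jumps and $D_r f(y):=-\sum_k\int_{r\leq|z|<1}Df(y)\cdot p_k\,\nu_k(dz)$ is a bounded first-order drift that can be absorbed into the lower-order coefficients of $L_0$; the large-jump contributions ($|z|\geq 1$) cancel between $L^*$ and $J_r^*$.

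Next, I would prove a ``lower-order'' estimate for $I_r$ with small constant. In case (a) ($s=1$), the first-order Taylor identity
\begin{equation*}
  f(y+p_k)-f(y)-Df(y)p_k=\int_0^1\big[Df(y+tp_k)-Df(y)\big]\cdot p_k\,dt,
\end{equation*}
combined with (H2) and (HE2), yields $\|I_r f\|_{L^p(B)}\leq C(r)\,\|Df\|_{L^p(B')}$ on compact sets $B\Subset B'$, with $C(r)\to 0$ as $r\to 0$. In case (b), $s\in(1,2)$, the heavier singularity of $\nu$ forces a second-order Taylor expansion; this is exactly why (H2') requires $C^3$ regularity of $p_k$ in $y$ and why (H3') imposes integrability of the large-jump bound, and it gives $\|I_r f\|_{L^p(B)}\leq C(r)\,\|D^2 f\|_{L^p(B')}$. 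In either case, the nonlocal term is dominated by a strictly-lower-order object with a constant tunable by $r$.

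Then I would run a bootstrap: fix a cutoff $\phi\in C_c^\infty$, write $u:=\phi f$, and from $(L^*-J_r^*)f=g\in L^\infty$ derive, in the distributional sense,
\begin{equation*}
  L_0 u = \phi g - \phi I_r f - \phi D_r f + [L_0,\phi]f + [I_r,\phi]f .
\end{equation*}
The nonlocal commutator $[I_r,\phi]f$ only involves shifts $|p_k|\leq Kr(1+|y|)$, so it is controlled by derivatives of $\phi$ and the estimates already derived for $I_r$. Applying the interior $W^{2,p}$ estimate of \cite{ZhangBao2013} for $L_0$ (available under (H1) and (HE1)) promotes $L^p_{\text{loc}}$ data on the right to $W^{2,p}_{\text{loc}}$ regularity of $u$, hence of $f$. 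Iterating, one gains at least $\min(2-s,1)>0$ derivatives per round, so after finitely many rounds $f\in W^{2,p}_{\text{loc}}$ for any finite $p$, in particular for some $p>d$.

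The hardest step is closing the bootstrap in case (b) with $s$ close to $2$. There $I_r$ has order strictly larger than $1$, so the first-order bound used in case (a) no longer applies and a second-order Taylor expansion becomes essential; (H2')--(H3') are exactly tuned so that the cost of this expansion at large $|z|$ is finite while the cost at small $|z|$ shrinks with $r$. One must then verify that, when coupled with the commutators $[L_0,\phi]f$ (which are harmless since (H1) forces $\partial b,\partial\sigma$ bounded), the perturbative constant in front of $\|D^2 f\|_{L^p}$ can be made small enough by choosing $r$ once and for all small, so that the perturbation is absorbed by the elliptic estimate for $L_0$ at the final step. Managing the unboundedness of $b$ and $\sigma$ inside the local bootstrap -- in particular the $|y|$-dependent jump size $|p_k|\lesssim|z|(1+|y|)$ -- is the recurring technical difficulty, and is what forces the localization argument to be carried out on each compact set separately.
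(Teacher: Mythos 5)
Your high-level reading -- treat the nonlocal part as a lower-order perturbation of a uniformly elliptic local operator, localize, bootstrap -- is the right intuition, but the proposal has a gap that prevents the bootstrap from even starting, and it is precisely this gap that the paper's argument is designed to handle.

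The starting regularity is only $f\in L^\infty$ and the equation $(L^*-J_r^*)f=g$ holds only distributionally. You propose to write $L_0 u=\phi g-\phi I_r^* f-\phi D_r f+[L_0,\phi]f+[I_r^*,\phi]f$ for $u=\phi f$ and then invoke an interior $W^{2,p}$ estimate for $L_0$. But every term you move to the right-hand side -- $I_r^* f$, $D_r f$, $[L_0,\phi]f$ -- already presupposes that $f$ has at least one classical derivative; $I_r^* f$ does not even make pointwise sense for $f\in L^\infty$ when $s>1$. There is no $L^p_{\mathrm{loc}}$ right-hand side to feed into the elliptic estimate, so the bootstrap cannot take its first step. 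This is exactly why the paper works with the {\em very weak} formulation: instead of differentiating $f$ directly, it tests the distributional equation against $u:=\eta\,\Delta^k_{-h}v_h$, where $v_h$ solves a Dirichlet problem whose right-hand side is $\abs{\eta\Delta^k_hf}^{p-1}\mathrm{sign}(\Delta^k_h f)$ -- the duality/difference-quotient device from \cite{ZhangBao2013}. All regularity is placed on $v_h$, not on $f$, and the delicate commutator estimate \eqref{eq est of R} of Step III is what makes this work for the nonlocal terms. Your sketch omits this device entirely.

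There is also a logical inconsistency in your treatment of case (b). You claim $\norm{I_r^* f}_{L^p}\le C(r)\norm{D^2f}_{L^p}$ with small $C(r)$, and simultaneously that you ``gain $\min(2-s,1)>0$ derivatives per round.'' An order-$2$ estimate with a small constant supports an {\em absorption} argument (once $W^{2,p}$ regularity is already in hand), not a gain-of-derivatives argument; to gain derivatives you would need an estimate in terms of a fractional Sobolev norm of order $s<2$, which you do not prove and which the paper also does not use. In the paper, case (b) is in fact closed differently: after obtaining $W^{1,p}_{\mathrm{loc}}$ via the duality argument, the jump from $W^{1,p}$ to $W^{2,p}$ uses existence and uniqueness of weak and strong solutions for the {\em full} integro-differential Dirichlet problem $(A_r+I_r)v=\cdot$ (Theorems 3.1.22, 3.2.3, 3.2.5 in \cite{garroni2002second}), rather than treating $I_r^*$ as a perturbation of $L_0$ at that stage. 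You would need to either supply a genuine fractional-order estimate to make the gain-of-derivatives loop rigorous, or adopt the paper's weak/strong-solution machinery for the nonlocal Dirichlet problem.
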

    This result will be proved in Section \ref{sec ell reg}.
\begin{rem}
  {\bf (a)}
    If $p\equiv0$ and the operator is local, then Theorem \ref{thm L gen
      s.g.} has been proven in \cite{CNJLocal} with assumption (HE1)
    replacing assumption (E).
%

  {\bf (b)}
    To do our generation proof (to prove the dissipativity of $L^*$) we
    need enough regularity for the
    equation $L^*u=f$ to hold a.e. for any $f\in L^\infty$ and some version
    of the Bony maximum principle to apply. This is encoded in (E), and such a
    condition can only be true under some sort of non-degeneracy conditions
    on the second order local terms (e.g. (HE1)).

    {\bf (c)} Assumption (E) can be relaxed when there are no second-order terms
    in the operator. Then the principal non-local term must be
    non-degenerate. To extend our proofs in this direction, new Bony type
    maximum principles are needed for fractional Sobolev spaces. We will not
    pursue this idea in this paper.

    {\bf (d)}     Non-degeneracy conditions like (HE1) or weaker  H\"ormander
    conditions, along with smoothness assumptions on the coefficients, are
    standard assumptions in the literature to ensure the
    existence of (smooth) PDFs for \eqref{eq sde levy}, see e.g.
    \cite{hiraba1992, Kolokoltsov2000,Komatsu2001,Bodnarchuk2008cond,
    Cass20091416, zhang2014} and references therein.


    {\bf (f)} Elliptic regularity results are well-known for local
    operators (and PDEs), and results that cover the local part of our
    operators ($L^*$
    with $p\equiv0$) can be found in the recent paper
    \cite{ZhangBao2013}. Theorem \ref{prop ell reg 01} (a) is essentially a corollary
    of results in \cite{ZhangBao2013}
    where the non-local term is treated as a lower-order
    perturbation. Part (b) is much more complicated and requires
    additional regularity on $p(x,z)$.

    There are also very general results for
    pseudo-differential operators.
    These results require that the symbols
    are smooth and satisfy certain decay assumptions which are not
    in general satisfied by the operators we consider here, see e.g. Section
    7.3.1 in \cite{abels2012pseudodifferential}.
\end{rem}

In the rest of the paper we set $m=1$ and $p_k(x,z)=p(x,z)$ to
simplify the notation. The general case is similar and will be
omitted.

\section{Properties of $L$ and proof of generation}
\label{sec:pfm}
In this section, we show that $L$ is well-defined and dissipative in
$L^1$, that $L^*$ is dissipative in $L^\infty$, and use a version of
the Lumer-Phillips theorem along with a perturbation result to show
semigroup generation for $L$ in $L^1$.

To work with $L$, we decompose it along with $L^*$ into three
parts. For any $r\in(0,1)$,
\begin{align*}
  L^*= A_r^*+I_r^*+J_r^*,
\end{align*}
where
\begin{align*}
  A_r^* f(y) & = b^T(y) D f(y)+\frac{1}{2}\sum_{i,j=1}^d a_{ij} \partial_{i}\partial_j f(y)+[D f(y)]^T \int_{\set{r\leq |z|<1}} p(y,z)\nu(dz),\\
  I_r^* f(y) & =\int_{\set{|z|<r}} [f(y+p(y,z))-f(y)-[D f(y)]^T p(y,z)]\nu(dz),\\
  J_r^* f(y) & = \int_{\set{|z|\geq r}} [f(y+p(y,z))-f(y)]\nu(dz).
\end{align*}

By integration by parts and the change of variables $x=
y+p(y,z)$ (assuming it is invertible), it follows
that the adjoint
\[
    L=A_r+I_r+J_r,
\]
where
\begin{align}
\nonumber A_r u(x) =\, &  \frac 12 \sum_{i,j=1}^d  \partial_{i}\partial_{j}(a_{ij} u(x))
            - \text{div}\left[ \left(b(x)+ \int_{\set{r\leq |z|<1}}
                p(x,z)\nu(dz)\right)u(x)\right]\\ \label{eq I}
  I_r u(x) = & \int_{{|z|<r}} [u(x-q(x,z))-u(x)+D u(x)q(x,z)]  m(x,z) \nu (dz)\\
    \notag & + (D u(x))^T \int_{{|z|<r}}\left[p(x,z)-q(x,z) m(x,z) \right]\nu (dz) \\
    \notag & + u(x)  \int_{{|z|<r}}\left[ m(x,z) +\text{div}_x
      p(x,z)-1\right]\nu (dz),
\end{align}
for \ $y(x,z) =x-p(y(x,z),z)=:x-q(x,z)$ \ and \ $m(x,z) :=\det
\big(D_x y(x,z)\big)$. The derivation can be found in Section 2.4 in
\cite{garroni2002second}.
If we assume  global invertibility of $y\mapsto y+p(y,z)$,
assumption \eqref{eq global est of 1+p'x}, then $J_r$ has the
explicit form \eqref{def Jr}. One contribution of this paper is to
relax this condition, and not
work with a $J_r$ given by an explicit formula, but rather defined
only by the duality $J_r=(J_r^*)^*$. Moreover, without global invertibility, the
derivation of $I_r$
from $I^*_r$ only holds for $r$ small enough. In this case, we still get
the (local) invertibility needed to do the above-mentioned change of
variables (see Proposition \ref{prop Ir and Jr well def} and Section \ref{sec well-def of L}).

Note that $A_r,I_r,A_r^*,I_r^*$ are unbounded operators while $J_r$ and $J_r^*$
are bounded.
\begin{rem}\label{rem B and L measurable}
    $J_r$ and $J_r^*$ can be defined on $L^\infty$ and $L^1$
    respectively (see below). To make the integrands
    well-defined (Borel or $\nu$-measurable) for functions in $L^1$
    and $L^\infty$, we always work with Borel representatives
    (cf. Remark 2.1 in \cite{AlibaudCJ2012}).
\end{rem}

Now we show that our operators are well-defined on $L^1$.
\begin{prop}\label{prop Ir and Jr well def}\

   \textbf{(a)}   Assume (H1) and $r>0$. Then $A_r$: $D(L)\to L^1(\Rd)$ is
   well-defined.

    \textbf{(b)} Assume (H2) and (H3). Then there is $r_0<\frac 1{4dK}$
    such that $I_r:
    D(L)\to L^1(\Rd)$ is well-defined for all $0<r<r_0$.

    \textbf{(c)} Assume (H2) and (H3) and $r>0$. Then
          $J_r:L^1(\Rd)\to L^1(\Rd)$ is well-defined and bounded,
    $$\norm{J_r}\leq 2\nu\left(\set{|z|\geq r}\right).$$
\end{prop}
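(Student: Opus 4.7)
For part (a), the strategy is direct expansion. Set $\tilde b(x) := b(x) + \int_{\{r\le|z|<1\}} p(x,z)\,\nu(dz)$; I would first verify $\tilde b \in C^1(\R^d,\R^d)$, noting that (H2) gives $|p(x,z)|\le K(1+|x|)|z|$ and $|D_y p(x,z)|\le K|z|$ for $\nu$-a.e.\ $|z|<1$, while (H3) yields $\int_{\{r\le|z|<1\}}|z|\,\nu(dz)\le r^{-1}\int_{|z|<1}|z|^2\,\nu(dz)<\infty$, which both justifies absolute convergence and allows differentiation under the integral. Combined with $a=\sigma\sigma^T\in C^2$ from (H1), expanding the derivatives and divergence in $A_r u$ for $u\in C_c^\infty$ produces continuous functions supported in $\mathrm{supp}\,u$, hence in $L^1(\R^d)$; no invertibility of $T_z$ is needed here.

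For part (b), the essential step is local invertibility of $T_z:y\mapsto y+p(y,z)$ for $|z|<r$. Choosing $r<r_0<\tfrac{1}{4dK}$ ensures $|D_y p(y,z)|\le K|z|<\tfrac{1}{4d}$ by (H2), making $1_d + D_y p$ uniformly non-singular; a Hadamard / contraction-mapping argument then shows $T_z$ is a global $C^1$-diffeomorphism of $\R^d$. This defines $y(x,z)$, $q(x,z):=x-y(x,z)$ and $m(x,z):=\det D_x y(x,z)$, with $|m-1|=O(|z|)$ locally in $x$. Second-order Taylor expansion in $z$, using the (H2) bounds on $D_y p$ and $D_y^2 p$, yields the cancellations
\[
|p(x,z)-q(x,z)m(x,z)|\le C_R|z|^2,\qquad |m(x,z)+\mathrm{div}_x p(x,z)-1|\le C_R|z|^2,
\]
for $x$ in any ball $B_R$. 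For $u\in C_c^\infty$ with $\mathrm{supp}\,u\subset B_R$, the Taylor estimate $|u(x-q)-u(x)+Du(x)\cdot q|\le\tfrac{1}{2}\|D^2 u\|_\infty|q|^2$ together with the displayed bounds imply each of the three integrands in $I_r u$ is pointwise $O(|z|^2)$ and supported in a bounded enlargement of $B_R$. Since $\int_{|z|<r}|z|^2\,\nu(dz)<\infty$ by (H3), this gives $I_r u\in L^\infty$ with compact support, hence $I_r u\in L^1(\R^d)$.

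For part (c), the plan is pure duality, with no invertibility assumption whatsoever. The pointwise estimate
\[
|J_r^* f(y)|\le\int_{\{|z|\ge r\}}\bigl(|f(y+p(y,z))|+|f(y)|\bigr)\,\nu(dz)\le 2\nu(\{|z|\ge r\})\|f\|_\infty,
\]
which is finite by (H3), shows $J_r^*$ is bounded on bounded Borel functions with norm at most $2\nu(\{|z|\ge r\})$, using the Borel-representative convention of Remark~\ref{rem B and L measurable} to make the integrand well defined. Defining $J_r$ on $L^1$ by the duality $\langle J_r u,f\rangle := \langle u,J_r^* f\rangle$ for $f\in L^\infty$, Fubini (justified by the $L^1$-finite integrand above) together with the estimate give $|\langle J_r u,f\rangle|\le 2\nu(\{|z|\ge r\})\|u\|_1\|f\|_\infty$, so $J_r u$ is a bounded linear functional on $L^\infty$ of the claimed norm. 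I expect the principal obstacle here to be identifying this functional with an actual $L^1(\R^d)$ element rather than a more general element of $(L^\infty)^*$: without global invertibility of $T_z$ for $|z|\ge r$, the pushforward $(T_z)_*(u\,dy)$ need not be absolutely continuous for each $z$. This identification is likely resolved by first verifying absolute continuity on a dense subclass (say $u\in C_c^\infty$, via a straightforward pushforward analysis that can tolerate non-smooth behavior on $\nu$-null sets in $z$) and then extending $J_r$ to $L^1$ by the above uniform bound.
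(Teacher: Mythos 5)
Your arguments for parts (a) and (b) match the paper's in substance: (a) is a direct expansion once one checks $\tilde b\in C^1$, and (b) runs on exactly the same three ingredients the paper uses (local invertibility of $y\mapsto y+p(y,z)$ for $r<1/(4dK)$, the cancellation $|p-qm|\lesssim|z|^2$, and the cancellation $|m+\mathrm{div}_xp-1|\lesssim|z|^2$, integrated against $\nu$ on a compact). The one thing worth noting in (b) is the source of the strict threshold $r_0<\frac{1}{4dK}$: the paper needs $r_0$ slightly smaller so that $\det(1_d+M)=1+\mathrm{tr}(M)+P(x,z)$ stays bounded away from zero uniformly in $x$ (Lemmas \ref{lem decomps 1+M} and \ref{lem bound 1-m(x,z)} do the work); your sketch asserts the bounds rather than deriving them, but the route is the same.

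For part (c), you correctly identify the duality estimate $\abs{\langle J_ru,f\rangle}\leq 2\nu(\{|z|\geq r\})\norm{u}_1\norm{f}_\infty$ and you correctly flag the real obstruction: this gives an element of $(L^\infty)'$, not of $L^1$. But the resolution you propose does not close the gap, and the reason is precisely the one you anticipate. First, the representation theorem $(L^\infty)'\cong\mathrm{ba}$ (Theorem IV.8.16 in \cite{schwartz1958linear}) already hands you a bounded, finitely additive, absolutely continuous measure $\lambda_u$ for free --- absolute continuity is not what is missing. What is missing, and what is not automatic, is \emph{countable} additivity, which is exactly the hypothesis needed to invoke Radon--Nikodym and extract an $L^1$ density. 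Your plan of ``verify absolute continuity on $C_c^\infty$ via pushforward analysis, then extend'' therefore aims at the wrong target, and the pushforward route is genuinely blocked: for $|z|\geq r$ nothing in (H2)--(H3) prevents $y\mapsto y+p(y,z)$ from collapsing sets of positive measure (even being constant in $y$ for all $z$ in a set of positive $\nu$-measure), so $(T_z)_*(u\,dy)$ can develop atoms and your hope that this only happens on a $\nu$-null set is not justified by the hypotheses. The paper sidesteps the pushforward entirely: it tests $\lambda_u$ against indicator functions, writes $\lambda_u(\cup_k A_k)=\langle u, J_r^*\chi_{\cup_k A_k}\rangle$, and interchanges sum and integral by dominated convergence (legitimate because $u\in L^1$ and (H3) gives $\nu(\{|z|\geq r\})<\infty$) to get $\lambda_u(\cup_k A_k)=\sum_k\lambda_u(A_k)$. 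That countable-additivity computation, followed by Radon--Nikodym (Theorem \ref{thm Radon-Nkd}), is the step your proposal lacks and should be supplied.
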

It follows that $L:D(L)\to L^1(\R^d)$ is well-defined if (H1) -- (H3) holds.
The proof will be given in Section \ref{sec well-def of L}.

Next, we show that the operators (and their adjoints) are dissipative
in the sense of the following definition (see e.g. Section II.3 of \cite{engel2006one}):

\begin{defn}
    A linear operator $(B,D(B))$ on a Banach space $(\mathbb  X, \norm{\cdot})$ is
    \emph{dissipative} if \ $\norm{(\lambda-B)u}\geq \lambda\norm{u}$ \ for all $\lambda>0$ and all $u\in D(B)$.
\end{defn}

\begin{thm}\label{prop Ar+Ir and adj dspt}\

  \textbf{(a)} Assume (H1) -- (H3) and $r<r_0$, where $r_0$ is defined in
  Proposition \ref{prop Ir and Jr well def}.
    Then $A_r+I_r$ is dissipative on $D(L)\subset L^1(\Rd)$.\smallskip

  \textbf{(b)} Assume (H1) -- (H3), and (E).
    Then $A_r^*+I_r^*$ is dissipative on
    $D(A_r^*+I_r^*)\subset L^{\infty}(\Real^d)$.

  \textbf{(c)} Assume (H2) and (H3). Then $J_r$ is dissipative on
  $L^1(\Real^d)$ for any $r>0$.
\end{thm}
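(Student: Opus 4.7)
The plan is to handle the three parts by different techniques: (c) by a positive-operator decomposition, (a) by a smooth convex regularization of $|\cdot|$ together with a nonlocal Kato-type inequality, and (b) by combining the Sobolev regularity supplied by (E) with a Bony-type maximum principle extended to the nonlocal term.

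For (c), write $J_r = B_r - cI$ with $c := \nu(\{|z|\geq r\}) < \infty$ by (H3), where $B_r$ denotes the $L^1$-adjoint of
\[
    B_r^* f(y) := \int_{|z|\geq r} f(y+p(y,z))\,\nu(dz).
\]
Since $B_r^*$ is positivity-preserving on $L^\infty$ and $\|B_r^* f\|_\infty \leq c\|f\|_\infty$ with equality at $f \equiv 1$, duality gives that $B_r$ is positive on $L^1$ with $\|B_r\|_{L^1}\leq c$ (because $\|B_r u\|_1 = c\|u\|_1$ when $u \geq 0$, and $|B_r u|\leq B_r|u|$ in general). The standard characterization of dissipativity in $L^1$ via the duality element $\|u\|_1\,\mathrm{sgn}(u)$ then yields
\[
   \int_{\R^d} J_r u \cdot \mathrm{sgn}(u)\,dx = \int_{\R^d} B_r u \cdot \mathrm{sgn}(u)\,dx - c\|u\|_1 \leq \|B_r u\|_1 - c\|u\|_1 \leq 0.
\]

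For (a), the task is to prove $\int (A_r+I_r) u \cdot \mathrm{sgn}(u)\,dx \leq 0$ for $u \in C_c^\infty(\R^d)$. I would regularize with a smooth convex $\Phi_\eps$ whose derivative $\eta_\eps$ approximates $\mathrm{sgn}$, test against $\eta_\eps(u)$, and pass to the limit $\eps \to 0$. Integration by parts in the local term generates the nonpositive diffusion contribution $-\tfrac12\int a_{ij}\,\partial_i u\,\partial_j u\,\Phi_\eps''(u)\,dx$ together with first- and zero-order remainders that stay bounded under (H1) and vanish in the limit on the $\{u\neq 0\}$-part of the support. For the small-jump term, pointwise convexity of $\Phi_\eps$ yields the Kato-type inequality
\[
   \eta_\eps(u(x))\bigl[u(x-q)-u(x)+Du(x)\cdot q\bigr] \leq \Phi_\eps(u(x-q)) - \Phi_\eps(u(x)) + D[\Phi_\eps(u)](x)\cdot q,
\]
whose $(x,\nu)$-integral against $m(x,z)$ has zero mean by the same change of variables that produced \eqref{eq I}. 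The auxiliary $Du$- and $u$-coefficients in \eqref{eq I} contribute remainders controlled by the $O(|z|^2)$-behaviour of $p - qm$ and $m + \mathrm{div}_x p - 1$ near $z=0$, which is $\nu$-integrable by (H2)--(H3). The main obstacle is the $\eps\to 0$ passage: $\{u=0\}$ is generally not separated from $\{u>0\}\cup\{u<0\}$ by an open gap, and, in contrast to the local case, nonlocal jumps can connect $\{u>0\}$ to $\{u<0\}$ across arbitrary distances, so the separation and approximation of the sign-domains announced in the introduction must be carried out with extra care.

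For (b), let $u \in D(A_r^* + I_r^*)$ with $(\lambda - A_r^* - I_r^*)u = f$; the aim is $\lambda\|u\|_\infty \leq \|f\|_\infty$. By (E), $u \in W^{2,p}_{\mathrm{loc}}$ for some $p>d$, so $u$ is continuous (Sobolev embedding) and the equation holds a.e. Reducing to $M := \esssup u > 0$, pick $x_n$ with $u(x_n) \to M$. A near-maximum version of Bony's maximum principle for $W^{2,p}$-functions ($p>d$) gives $|Du(x_n)|\to 0$ and $\limsup_n \tfrac12 \sum_{i,j} a_{ij}(x_n)\partial_{ij} u(x_n) \leq 0$, hence $\limsup_n A_r^* u(x_n) \leq 0$. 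For the nonlocal part, $u(x_n + p(x_n,z)) - u(x_n) \leq 0$ (because $u\leq M$, after a further small perturbation of $x_n$ if $M$ is not attained), while the compensator $-Du(x_n)\cdot p(x_n,z)$ is dominated by $K|Du(x_n)|(1+|x_n|)|z|$, which is $\nu$-integrable near $z=0$ by (H3) and vanishes in $n$ by dominated convergence; hence $\limsup_n I_r^* u(x_n) \leq 0$. Substituting into $\lambda u(x_n) = f(x_n) + A_r^* u(x_n) + I_r^* u(x_n)$ yields $\lambda M \leq \|f\|_\infty$, and the same argument applied to $-u$ completes $\lambda\|u\|_\infty \leq \|f\|_\infty$. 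The principal technical difficulty is making the near-maximum Bony argument rigorous when the essential supremum is only asymptotically attained and transferring it to $I_r^*$, which is where the ideas of Gimbert--Lions referenced in the introduction enter.
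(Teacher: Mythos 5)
Part (c) is essentially the paper's argument repackaged: your decomposition $J_r = B_r - cI$ with $B_r$ positive and $\|B_ru\|_1 \leq c\|u\|_1$ is just a dual rewriting of the paper's direct inequality $u(x)\,\mathrm{sign}\,u(x+p(x,z)) \leq |u(x)|$ inside the computation of $\langle J_ru, f_u\rangle$; both are correct and equally elementary.

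Part (a) pursues a genuinely different route (Kato-type convex regularization) from the paper, which works directly with $w := |u|$ on $V^c := \{u\neq 0\}$: with $I_r = S + T$, the paper derives $Sw = \pm Su + R$ with a pointwise nonnegative remainder $R$, establishes $\int_{V^c} I_r w\,dx \leq 0$ by truncating the L\'evy measure near the origin and invoking Fatou, and then closes using $\int_{V^c}A_rw \leq 0$ from the local case. Your sketch, however, stops at exactly the hard part. You yourself flag the $\eps\to 0$ passage as an obstacle, and note that jumps connect $\{u>0\}$ to $\{u<0\}$; but in addition, your claim that the $\Phi_\eps$-integral "has zero mean by the same change of variables" is inaccurate as stated: the change of variables kills $\int(\Phi_\eps(u(x-q))m - \Phi_\eps(u))\nu\,dx$, not the full $S$-applied-to-$\Phi_\eps(u)$ integral; what vanishes is $\int_{\R^d}I_r(\Phi_\eps(u))\,dx$, and the discrepancy $(\eta_\eps(u)u - \Phi_\eps(u))\int(m + \mathrm{div}_x p - 1)\nu$ is an $\eps$-dependent remainder that has to be controlled against an $x$-unbounded coefficient. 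So the approach is plausible but not a proof.

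Part (b) contains a concrete wrong step. You assert $u(x_n + p(x_n,z)) - u(x_n) \leq 0$ "because $u\leq M$", but if the essential supremum $M$ is not attained then this difference can be as large as $M - u(x_n) > 0$, and a "further small perturbation of $x_n$" does not manufacture an actual maximum. This is precisely the obstruction the paper's device removes: one sets $f_n := f - |y|^2/n$, which (by Lemma 5.9 of \cite{CNJLocal}) has a genuine global maximizer $y_n'$ satisfying $|f(y_n')| \to \|f\|_\infty$, $(1+|y_n'|)|Df(y_n')|\to 0$, and $|y_n'|^2/n\to 0$. The Bony-type maximum principle (Proposition 3.1.14 in Garroni--Menaldi, built on Gimbert--Lions) is applied to $f_n$ at its global maximum, giving the sign of $-(A_r^*+I_r^*)f_n$, and the correction $(A_r^*+I_r^*)(|y|^2/n)(y_n)$ is then computed explicitly and shown to vanish using the above limits and (H1)--(H3). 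Your "near-maximum" version of Bony is never actually formulated, and for the nonlocal term no such version exists without the quadratic penalization; this is the crux of (b) and is missing from your argument.
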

$D( A_r^*+I_r^*)$ is the maximal domain of $A_r^*+I_r^*$ on
$L^\infty(\Rd)$. It will be characterized in Section
\ref{subsec dspt of Ar*+Ir*}.
The proposition is proved in Sections \ref{subsec dspt of Ar+Ir} --
\ref{subsec dspt of Jr}. These proofs and the proofs of related auxiliary
results constitute the main technical innovation of this paper. They
are highly non-trivial, and the PDE-inspired way of doing the proofs
seems to be unconventional.

\begin{rem}
    One can easily check that also $J_r^*$ is dissipative on $L^{\infty}(\Rd)$.
    Hence both $L$ and $L^*$ are dissipative by Section III.2 in
    \cite{engel2006one}.
\end{rem}

We are in a position to use the Lumer-Phillips theorem to prove the
following preliminary generation result.
\begin{prop}\label{prop Ir+Ar gen sg}
  Assume (H1) -- (H3), (E), and $r<r_0$, where $r_0$ is defined in
  Proposition \ref{prop Ir and Jr well def}. Then $A_r+I_r$
  generates a strongly continuous contraction semigroup on $L^1(\Real^d)$.
\end{prop}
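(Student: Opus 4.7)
The plan is to apply the Lumer--Phillips theorem in a form suited to the non-reflexive space $L^1$: a densely defined dissipative operator $B$ whose range $(\lambda-B)D(B)$ is dense for some $\lambda>0$ has a closure that generates a strongly continuous contraction semigroup (see Theorem II.3.15 and the following corollaries in \cite{engel2006one}). All the heavy technical work---the two dissipativity statements---is already packaged in Theorem \ref{prop Ar+Ir and adj dspt}, so what remains is essentially a duality argument that converts dissipativity of the adjoint into density of the range of $A_r+I_r$.

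Concretely, I would proceed in four short steps. First, note that $D(L)=C_c^\infty(\Rd)$ is dense in $L^1(\Rd)$, so $A_r+I_r$ is densely defined there. Second, invoke Theorem \ref{prop Ar+Ir and adj dspt}(a) to get dissipativity of $A_r+I_r$ on $D(L)$ for $r<r_0$. Third, show that the range $(\lambda-(A_r+I_r))D(L)$ is dense in $L^1(\Rd)$ for some (in fact, every) $\lambda>0$. Fourth, invoke Lumer--Phillips to conclude that $\overline{A_r+I_r}$ generates a strongly continuous contraction semigroup on $L^1(\Rd)$.

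The substance of the argument sits in step three, which I would handle by Hahn--Banach duality. It suffices to prove that any $g\in L^\infty(\Rd)$ satisfying
\[
\int_{\Rd} g(x)\,\bigl(\lambda u(x)-(A_r+I_r)u(x)\bigr)\,dx=0\qquad\text{for all }u\in C_c^\infty(\Rd)
\]
must vanish. Because $A_r^*+I_r^*$ is by construction the formal adjoint of $A_r+I_r$ (this is exactly how the decomposition of $L$ in Section \ref{sec:pfm} was derived), the above identity rewrites as the distributional equation $(\lambda-(A_r^*+I_r^*))g=0$ in $\mathcal D'(\Rd)$. Since $\lambda g\in L^\infty(\Rd)$, this forces $g$ into the maximal $L^\infty$-domain $D(A_r^*+I_r^*)$, where $A_r^*+I_r^*$ is dissipative by Theorem \ref{prop Ar+Ir and adj dspt}(b). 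Dissipativity then gives
\[
\lambda\,\norm{g}_\infty\leq \norm{(\lambda-(A_r^*+I_r^*))g}_\infty=0,
\]
so $g=0$ and the range is dense.

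The main obstacle is not in this proof but is already packed into part (b) of Theorem \ref{prop Ar+Ir and adj dspt}: it is precisely there that assumption (E) is used to lift any such $g$ from $L^\infty$ into a Sobolev space sufficient for a Bony-type maximum principle to apply to the nonlocal operator $A_r^*+I_r^*$. Given that ingredient (and the well-definedness result Proposition \ref{prop Ir and Jr well def}, which makes the identification of $A_r^*+I_r^*$ as the adjoint rigorous for $r<r_0$), the Lumer--Phillips argument sketched above is essentially routine.
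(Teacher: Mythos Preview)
Your proposal is correct and is essentially the same approach as the paper's: the paper simply cites Corollary II.3.17 in \cite{engel2006one}, which packages precisely the Hahn--Banach duality argument you spell out (dissipativity of the adjoint $\Rightarrow$ dense range of $\lambda-(A_r+I_r)$ $\Rightarrow$ Lumer--Phillips applies). The only cosmetic difference is that you write out the proof of that corollary explicitly rather than invoking it as a black box.
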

\begin{proof}
  Since  $A_r+I_r$ and $A_r^*+I_r^*$ are dissipative by Proposition
  \ref{prop Ar+Ir and adj dspt},
  and $A_r+I_r$ is densely defined ($D(L)$ is dense in $L^1(\R^d)$),
  $A_r+I_r$ generates a strongly continuous contraction semigroup $P_t$ on $L^1(\Real^d)$ by a
  version of the Lumer-Phillips theorem -- see Corollary II.3.17 in
  \cite{engel2006one}.
\end{proof}

To get a generation results for the full operator $L$, we view it
as a bounded perturbation of $A_r+I_r$ and use the following result:
\begin{thm}[Theorem 3.3.4 in \cite{pazy1992semigroups}]\label{thm dspt ptb pazy}
  Let $B_1$ generate a contraction semigroup
  on a Banach space $(\mathbb  X,\norm{\cdot})$ and $B_2$ be dissipative.
  Assume $D(B_1)\subset D(B_2)\subset \mathbb X$ and there is a $\lambda>0$ such that
  \[
    \norm{B_2 x}\leq \norm{B_1 x}+ \lambda \norm{x}\qquad\text{ for all }
        x\in D(B_1).
  \]
  If $B_2^*$, the adjoint of $B_2$, is densely defined,
  then the closure of $B_1+B_2$ generates a strongly continuous contraction semigroups.
\end{thm}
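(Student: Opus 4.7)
The strategy is to apply a version of the Lumer-Phillips theorem to $\overline{B_1+B_2}$, verifying that (i) its domain is dense in $\mathbb{X}$, (ii) it is closed and dissipative, and (iii) $\mu I-\overline{B_1+B_2}$ has dense range for some $\mu>0$. Density is immediate since $D(B_1+B_2)=D(B_1)$ and $B_1$ is a $C_0$-semigroup generator. Closability of $B_2$, and hence of $B_1+B_2$, follows from the assumption that $D(B_2^*)$ is dense: if $x_n\in D(B_2)$ with $x_n\to 0$ and $B_2 x_n\to y$, then for every $\phi\in D(B_2^*)$ one has $\langle y,\phi\rangle=\lim\langle x_n,B_2^*\phi\rangle=0$, so $y=0$. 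Combined with the relative bound $\|B_2 x\|\leq\|B_1 x\|+\lambda_0\|x\|$ (writing $\lambda_0$ for the constant in the hypothesis, to avoid collision with the spectral parameter) and closedness of $B_1$, this gives closedness of $\overline{B_1+B_2}$.

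The key step is the range condition, and my plan is a homotopy argument in the perturbation parameter $s\in[0,1]$. For each $s<1$, $sB_2$ has $B_1$-relative bound $s<1$ and additive constant $s\lambda_0$, so the classical Kato perturbation theorem (applied via a Neumann series on $(\mu-B_1)^{-1}$ for sufficiently large $\mu$) produces a strongly continuous contraction semigroup $T_s(t)$ with generator $B_1+sB_2$, and the resolvents $R_s(\mu):=(\mu-B_1-sB_2)^{-1}$ satisfy $\|R_s(\mu)\|\leq 1/\mu$. Letting $s\to 1^-$, one shows that $R_s(\mu)y$ converges strongly to some $R(\mu)y$ for every $y\in\mathbb{X}$; the natural bound on $B_2 R_s(\mu)$ blows up as $s\to 1$, so to get Cauchy convergence one tests against $\phi\in D(B_2^*)$ and uses duality on a dense subspace, then extends to all of $\mathbb{X}$ by uniform boundedness. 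The limit $R(\mu)$ inherits the resolvent equation and the bound $\|R(\mu)\|\leq 1/\mu$, identifying it as the resolvent of a closed densely defined generator $G$ of a contraction semigroup. Finally, one verifies $G=\overline{B_1+B_2}$: the inclusion $\overline{B_1+B_2}\subseteq G$ is immediate from $(\mu-B_1-sB_2)x\to(\mu-B_1-B_2)x$ for $x\in D(B_1)$, while the reverse inclusion invokes the density of $D(B_2^*)$ once more, via a dual Lumer-Phillips check on $\mathbb{X}^*$ that rules out strict extensions.

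The main obstacle is that the relative bound has coefficient exactly $1$ on $\|B_1 x\|$, the borderline case of Kato's theorem: a direct Neumann-series proof at $s=1$ fails because $\|B_2(\mu-B_1)^{-1}\|$ is typically at least $1$ no matter how large $\mu$ is taken. The density of $D(B_2^*)$ is precisely the hypothesis that compensates: it grants closability, enables the duality-based extraction of convergence of the $R_s(\mu)$ as $s\to 1^-$, and pins down the limit resolvent as that of the closure of $B_1+B_2$ rather than of a strictly larger closed extension.
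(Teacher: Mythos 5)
The paper does not prove this statement; it is cited verbatim from Pazy's book (Theorem 3.3.4), so there is no internal proof to compare against. Your plan does recover the right ingredients of the standard argument (the homotopy $s\mapsto B_1+sB_2$, Kato's perturbation theorem for $s<1$, and the duality pairing with $D(B_2^*)$), but the central step as you state it has a gap. You claim that testing against $\phi\in D(B_2^*)$ together with uniform boundedness yields \emph{strong} Cauchy convergence of $R_s(\mu)y$ as $s\to 1^-$; this mechanism only yields \emph{weak} convergence. Boundedness plus convergence of $\langle R_s(\mu)y,\phi\rangle$ for a dense set of $\phi$'s gives $R_s(\mu)y\rightharpoonup R(\mu)y$ and nothing more, and with only a weak limit the subsequent steps — passing the resolvent identity to the limit, identifying $R(\mu)$ as the resolvent of a bona fide generator, and showing that generator is exactly $\overline{B_1+B_2}$ and not a larger extension — all become genuinely problematic (products of operators do not pass to weak limits, and your final "dual Lumer-Phillips check" is not spelled out enough to fill the hole).

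The standard and much shorter route (and the one Pazy actually takes) avoids constructing a limit resolvent entirely. Note first that $B_1+B_2$ on $D(B_1)$ is densely defined and dissipative (for $B_1$ a generator one has $\mathrm{Re}\langle B_1x,x^*\rangle\le0$ for \emph{every} $x^*$ in the duality set $J(x)$, so one can pick the $x^*\in J(x)$ that works for $B_2$), hence automatically closable — density of $D(B_2^*)$ is not needed for closability, so your argument there is correct but superfluous. By Lumer-Phillips it suffices to show $\overline{R\big(\mu-(B_1+B_2)\big)}=\mathbb X$ for some $\mu>0$. Fix $y$ and let $x_s:=(\mu-B_1-sB_2)^{-1}y$ for $s<1$. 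The relative bound gives $(1-s)\|B_2x_s\|\le C\|y\|$ uniformly, so $(\mu-B_1-B_2)x_s=y-(1-s)B_2x_s$ is a bounded family, and for $\phi\in D(B_2^*)$ one has $(1-s)\langle B_2x_s,\phi\rangle=(1-s)\langle x_s,B_2^*\phi\rangle\to0$. Thus $(\mu-B_1-B_2)x_s\rightharpoonup y$; by Mazur's theorem $y$ lies in the norm closure of the (convex) range, so the range is dense and Lumer-Phillips closes the proof. I suggest replacing your resolvent-limit argument with this direct density-of-range argument, which is where the hypothesis on $D(B_2^*)$ does its real work.
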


\begin{proof}[Proof of Theorem \ref{thm L gen s.g.}]
   Take $r<r_0$ where $r_0$ is defined in
  Proposition \ref{prop Ir and Jr well def}. Then note that
  $L-J_r=A_r+I_r$ generates   a strongly continuous contraction
  semigroup on $L^1(\Real^d)$
  by Proposition \ref{prop Ir+Ar gen sg}, that
  $J_r$ is bounded and
  dissipative on $L^1(\Real^d)$ by Propositions
  \ref{prop Ir and Jr well def} (c) and
  \ref{prop Ar+Ir and adj dspt} (c), and $J^*_r$ is bounded and hence
  an everywhere defined operator on $L^\infty = (L^1)^*$ by definition.
  Hence the result follows from Theorem \ref{thm dspt ptb pazy}
  with  $B_1=L-J_r$, $B_2=J_r$, and $\lambda\geq \norm{J_r}$.
\end{proof}

\section{Operators in $L^1$ -- proof of Proposition \ref{prop Ir and Jr well def}}
\label{sec well-def of L}
In this section prove Proposition \ref{prop Ir and Jr well def},
i.e. we show that the operators $A_r$ and $I_r$ are well-defined from
$D(L)$ into $L^1$ and $J_r$ well-defined and bounded on $L^1$. For $A_r$ this is
immediate from the definition of this operator, so we will focus on
the other two operators.
If we assume the global invertibility \eqref{eq global est of 1+p'x}, then
the results follow from arguments similar to those given in Section
2.4 of \cite{garroni2002second}. However, the general case is more
complicated and will be dealt with now.

We recall from Section \ref{sec intro} that
$$y(x,z) =x-p(y(x,z),z)=:x-q(x,z)\quad\text{and}\quad m(x,z) :=\det
\big(D_x y(x,z)\big),$$ and note that by the implicit function
theorem
    \begin{align}\label{m-eq}
        m(x,z)=\det \big( 1_d - D_x q(x,z) \big)
            =\frac{1}{\det \big( 1_d + (D_y p)(y(x,z),z) \big)}.
    \end{align}

\subsection{Proposition \ref{prop Ir and Jr well def} (b) -- the
  operator $I_r$}\label{subsec well def of I_r}
Throughout this section, we assume
(H2) -- (H3) with $r<1/(4dK)$, and we define the set
\[
    U_r:=\Real^d\times\set{z\in \Real^d: |z|<r}.
\]
Before we prove the result, we give a long list of technical results.

\begin{lem}\label{lem bound of y(x,z)}
   $|y(x,z)|\leq 2|x|+1$ for $(x,z)\in U_r$.
\end{lem}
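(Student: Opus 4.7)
The defining relation $y(x,z) = x - p(y(x,z),z)$ reduces the lemma to a fixed-point style estimate once we know that $y(x,z)$ is actually well-defined. So the first step is to record that for $|z|<r<1/(4dK)$ the map $y\mapsto y+p(y,z)$ is a global diffeomorphism of $\R^d$: by (H2), $|D_y p(y,z)|\leq K|z|<1/(4d)<1$, so this map is a Lipschitz perturbation of the identity whose Jacobian $1_d+D_yp(y,z)$ is uniformly invertible, and a standard Hadamard/contraction-mapping argument gives the existence and uniqueness of the inverse $x\mapsto y(x,z)$. (This invertibility is anyway the content that underlies the change-of-variables used to derive $I_r$, so it belongs to this part of the paper in any case.)

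With $y(x,z)$ in hand, the plan is to take absolute values in the defining identity and apply the linear growth bound in (H2):
\begin{equation*}
 |y(x,z)| \;\leq\; |x| + |p(y(x,z),z)| \;\leq\; |x| + K\bigl(1+|y(x,z)|\bigr)|z|.
\end{equation*}
Rearranging,
\begin{equation*}
 \bigl(1-K|z|\bigr)\,|y(x,z)| \;\leq\; |x| + K|z|.
\end{equation*}
Since $|z|<r<1/(4dK)$ implies $K|z|<1/(4d)\leq 1/4$, the factor $1-K|z|$ is at least $3/4>1/2$, and solving yields
\begin{equation*}
 |y(x,z)| \;\leq\; 2\bigl(|x|+K|z|\bigr) \;\leq\; 2|x| + \tfrac{1}{2d} \;\leq\; 2|x|+1,
\end{equation*}
which is the claim.

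\textbf{Main obstacle.} The only nontrivial ingredient is the preliminary step of showing that $y(x,z)$ exists globally (not just locally near a given $x$) under the smallness condition $r<1/(4dK)$; the growth estimate itself is then a one-line manipulation. This is precisely why the constant $1/(4dK)$ appears: the factor $d$ absorbs the discrepancy between the component-wise bound on $D_yp$ in (H2) and the operator-norm bound needed to invoke the contraction principle, and the factor $4$ leaves enough room both to invert and to conclude $1-K|z|>1/2$ in the final arithmetic.
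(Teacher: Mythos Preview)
Your proof is correct and follows essentially the same route as the paper: both start from the defining relation $y(x,z)=x-p(y(x,z),z)$, apply the linear-growth bound $|p(y,z)|\leq K(1+|y|)|z|$ from (H2), and use $K|z|<1/(4d)\leq 1/2$ to absorb the $|y|$-term on the right into the left, arriving at $|y|\leq 2|x|+1$. The paper writes the inequality as $|y|\leq |x|+\tfrac12(1+|y|)$ and solves, while you write $(1-K|z|)|y|\leq |x|+K|z|$ and solve; the arithmetic is the same. Your preliminary paragraph on the global well-definedness of $y(x,z)$ via the contraction/Hadamard argument is a welcome addition that the paper leaves implicit in this lemma.
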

\begin{proof}
    Observe that for $(x,z)\in U_r$ with $r<1/(4dK)$,
    \begin{align*}
      |y(x,z)| & \leq |y(x,z)+q(x,z)|+|q(x,z)| \\
                & = |y(x,z)+q(x,z)|+|p(y(x,z),z)|\\
                & \leq |x|+K(1+|y(x,z)|)|z|\\
                & \leq |x|+\frac{1}{4d}(1+|y(x,z)|)\\
                & \leq |x|+\frac{1}{2}(1+|y(x,z)|),
    \end{align*}
and the result follows.
\end{proof}

\begin{lem}\label{lem est of q(x,z)}
    For some $C>0$ and all $(x,z)\in U_r$,
    \begin{equation*}
      \abs{q(x,z)}\leq C(1+|x|)|z|.
    \end{equation*}

\end{lem}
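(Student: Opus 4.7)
The plan is to combine the growth bound on $p$ from (H2) with the bound on $y(x,z)$ from Lemma~\ref{lem bound of y(x,z)}. By definition $q(x,z) = p(y(x,z),z)$, so the first step is simply to write
\[
    |q(x,z)| = |p(y(x,z), z)| \leq K\bigl(1+|y(x,z)|\bigr)|z|,
\]
using the linear-growth estimate $|p(y,z)| \leq K(1+|y|)|z|$ from (H2), valid for $(x,z) \in U_r$ since $|z|<r<1$.

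Next I would substitute the pointwise bound $|y(x,z)| \leq 2|x|+1$ from the preceding lemma into the right-hand side to get
\[
    |q(x,z)| \leq K\bigl(1 + 2|x| + 1\bigr)|z| = 2K(1+|x|)|z|,
\]
so the claim holds with $C = 2K$. The argument is a routine chaining of the two estimates and there is no real obstacle; I expect the only mild subtlety to be making sure that the choice $r < 1/(4dK)$ is enough for both the previous lemma and for $|z|<1$ (so that (H2) applies), which it trivially is since $1/(4dK)<1$.
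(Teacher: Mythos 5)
Your argument is correct and follows exactly the same chain of estimates as the paper: apply the linear-growth bound on $p$ from (H2), then substitute the bound $|y(x,z)|\leq 2|x|+1$ from Lemma~\ref{lem bound of y(x,z)}. You have simply made the constant $C=2K$ explicit, which the paper leaves implicit.
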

\begin{proof}
    Just note that
    \[
      |q(x,z)|  =|p(y(x,z),z)|
       \leq K(1+|y(x,z)|)|z|
       \leq C(1+|x|)|z|
    \]
    by (H2) and Lemma \ref{lem bound of y(x,z)}.
\end{proof}

Next we show that invertibility \eqref{eq global est of 1+p'x}
holds if we restrict to the set $U_r$ (compare with (2.2.7) in
\cite{garroni2002second}).
\begin{lem}\label{lem bound of m(x,z)}
    There is $C>1$ such that for all $(x,z)\in U_r$, \eqref{eq global est of 1+p'x}
    holds and hence
$$0<C^{-1} \leq m(x,z)\leq C\quad\text{ for all }\quad (x,z)\in U_r.$$
\end{lem}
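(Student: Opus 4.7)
The plan is to exploit the smallness condition $r < 1/(4dK)$ in the definition of $U_r$ to treat $D_y p(y(x,z), z)$ as a uniformly small perturbation of the zero matrix on $U_r$. This makes $1_d + D_y p$ uniformly close to the identity, so its determinant is pinched away from $0$ and $\infty$, and the bound on $m(x,z)$ follows immediately from \eqref{m-eq}.

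First I would apply (H2) to obtain, for every $(x,z) \in U_r$, the uniform bound $\abs{D_y p(y(x,z), z)} \leq K|z| < Kr < 1/(4d)$; note that the estimate in (H2) is uniform in $y$, so composition with $y(x,z)$ is harmless and Lemma \ref{lem bound of y(x,z)} is not needed at this step. Writing $A := D_y p(y(x,z), z)$ and interpreting $\abs{\cdot}$ as the operator norm (possibly after shrinking $r$ by a fixed matrix-norm equivalence constant), every eigenvalue $\mu$ of $1_d + A$ then satisfies $\abs{\mu - 1} < 1/(4d)$, hence $\abs{\mu} \in (1 - 1/(4d),\, 1 + 1/(4d))$.

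Next I would combine the eigenvalue bound with $\abs{\det(1_d + A)} = \prod_i \abs{\mu_i}$ to sandwich $\abs{\det(1_d + A)}$ between $(1 - 1/(4d))^d$ and $(1 + 1/(4d))^d$, both of which lie in an interval $[c_1, c_2]$ with $0 < c_1 < c_2 < \infty$ depending only on $d$. A short deformation argument along the path $t \mapsto 1_d + tA$, which stays invertible for $t \in [0,1]$ since $\abs{tA} < 1$, shows that $\det(1_d + tA)$ is a continuous non-vanishing real-valued function of $t$ equal to $1$ at $t=0$, hence positive at $t=1$. This upgrades the sandwich on $\abs{\det}$ to a two-sided bound on $\det$ itself, which is exactly \eqref{eq global est of 1+p'x} restricted to $U_r$. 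Finally, since $m(x,z)$ is the reciprocal of this determinant by \eqref{m-eq}, the claimed bound $C^{-1} \leq m(x,z) \leq C$ follows with the reciprocated constant.

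I do not foresee any real obstacle: the factor $1/(4dK)$ in the definition of $r_0$ was chosen precisely to make this elementary perturbation argument go through. The only minor care needed is to promote the bound on $\abs{\det(1_d + A)}$ to one on $\det(1_d + A)$ itself, and the continuous deformation handles this without further difficulty.
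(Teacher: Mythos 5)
Your proposal is correct, but it takes a genuinely different route from the one the paper uses. The paper's proof of this lemma is stated as a one-liner, but the detailed argument behind it is visible in the proof of Lemma \ref{lem bound 1-m(x,z)}: there the authors observe from the entrywise bound $|(\partial_{y_i}p_j)(y,z)|\le K|z|\le 1/(4d)$ that the matrix $1_d+M$ is diagonally dominant with positive diagonal, and then invoke Price's theorem \cite{price1951} to obtain $\det(1_d+M)\ge\prod_i(|\alpha_{ii}|-\beta_i)\ge 2^{-d}$ directly, with positivity for free; the matching upper bound follows from, e.g., Hadamard's inequality or the Leibniz expansion with the same entrywise estimates. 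You instead pass through the spectral radius: since $\rho(A)\le\|A\|<1/(4d)$ (after the matrix-norm-equivalence adjustment to $r$ you correctly flag), every eigenvalue $\mu$ of $1_d+A$ satisfies $|\mu-1|<1/(4d)$, giving a two-sided pinch on $|\det(1_d+A)|=\prod_i|\mu_i|$, and then you recover the sign of $\det$ by a homotopy $t\mapsto\det(1_d+tA)$. Both arguments are elementary and sound; the diagonal-dominance route yields an explicit constant and handles sign automatically without the extra deformation step, while your eigenvalue route is perhaps cleaner conceptually but needs the two caveats you already identify (norm equivalence, and a separate positivity argument, which could alternatively be obtained by noting that complex eigenvalues of the real matrix $1_d+A$ occur in conjugate pairs and the real ones are forced into $(1-1/(4d),1+1/(4d))$, hence positive). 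Your remark that Lemma \ref{lem bound of y(x,z)} is not needed here is also correct, since the bound on $D_yp$ in (H2) is uniform in $y$.
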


\begin{proof}
    Straightforward by definition, assumptions, and \eqref{m-eq}.
\end{proof}

\begin{lem}\label{eq est div(p-q)}
    Define
    $f(\cdot,z):=(\text{div}_y p)(\cdot,z)=\sum_{k=1}^d f_k (\cdot,z)$.
    Then
    \begin{equation*}
      \abs{f(x,z)- f(y(x,z),z)}\leq C_R |z|^2,
        \text{ for all }|x|\leq R\text{ and }|z|<r.
    \end{equation*}
    where $C(x)>0$ locally bounded with respect to $x$.
\end{lem}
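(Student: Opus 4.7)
The plan is to estimate $f(x,z)-f(y(x,z),z)$ by a first-order mean value expansion along the segment joining $y(x,z)$ and $x$. The two factors produced this way will each contribute one power of $|z|$: the displacement $x - y(x,z) = q(x,z)$ is $O(|z|)$ by Lemma \ref{lem est of q(x,z)}, and the gradient $D_x f$, which involves only second derivatives of $p$, is also $O(|z|)$ by assumption (H2).

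More precisely, first I would observe that under (H2), $p(\cdot,z)$ is $C^2$ in $y$ for $\nu$-a.e. $z$, so $f(\cdot,z) = \mathrm{div}_y p(\cdot,z)$ is $C^1$, and by the fundamental theorem of calculus
\[
 f(x,z) - f(y(x,z),z) = \int_0^1 D_x f\bigl(y(x,z) + t\,q(x,z),\,z\bigr)\cdot q(x,z)\,dt.
\]
Next I would bound the integrand on the segment. For $|x|\leq R$ and $|z|<r$, Lemma \ref{lem bound of y(x,z)} gives $|y(x,z)|\leq 2R+1$, so any point $\xi = y(x,z)+t\,q(x,z)$ with $t\in[0,1]$ lies in the convex hull of $\{x,y(x,z)\}$ and satisfies $|\xi|\leq 2R+1$. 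Since $D_x f$ is a sum of entries of $D_y^2 p$, assumption (H2) yields $|D_x f(\xi,z)|\leq d\,C_{2R+1}|z|$. Combining this with the bound $|q(x,z)|\leq C(1+R)|z|$ from Lemma \ref{lem est of q(x,z)} and integrating gives the claimed $C_R|z|^2$.

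The only potential pitfall is making sure the intermediate points $\xi$ remain in a fixed compact set so that the local $C^2$ bound from (H2) actually applies; this is handled purely by convexity from Lemma \ref{lem bound of y(x,z)}. Otherwise the argument is routine, so I do not expect any substantive obstacle beyond tracking constants. Note also that everything is performed at fixed $z$ and the resulting constant $C_R$ depends on $R$ only through the bounds $C_{2R+1}$ in (H2) and the constant from Lemma \ref{lem est of q(x,z)}, so the estimate is uniform in $|z|<r$ as required.
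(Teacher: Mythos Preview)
Your proposal is correct and takes essentially the same approach as the paper: a mean value expansion in the first variable, with one factor of $|z|$ coming from $|q(x,z)|$ via Lemma~\ref{lem est of q(x,z)} and the other from the second-derivative bound on $p$ in (H2). The paper applies the mean value theorem componentwise to each $f_k$ with an intermediate point $x-\theta q(x,z)$, whereas you use the integral form on $f$ directly; your explicit check that the intermediate points stay in $B_{2R+1}$ (so that the local $C_R$ bound from (H2) applies) is actually more careful than the paper's write-up.
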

\begin{proof}
    Observe
    \[
        f(x,z)- f(y(x,z),z)=
            \sum_{k=1}^d [f_k(x,z)-f_k(y(x,z),z)].
    \]
    For each $k$,
    \begin{align*}
       f_k(x,z)-f_k(y(x,z),z)  & = f_k(x,z)-f_k(x-q(x,z),z)\\
       & =q^T(x,z) (Df_k)(x-\theta q(x,z),z).
    \end{align*}
    By Lemma \ref{lem est of q(x,z)} and (H2),
    \begin{align*}
        \abs{q^T(x,z) (Df_k)(x-\theta q(x,z),z)}  \leq C(1+|x|)|z|K|z|
        =:C(x)|z|^2.
    \end{align*}
    The result follows since
    $\abs{f_k(x,z)-f_k(y(x,z),z)}\leq C(x) |z|^2$.
\end{proof}

\begin{lem}\label{lem decomps 1+M}
  Let $M=M(x,z):=(D_y p)(y(x,z),z)$, then
  \begin{equation*}
    \det(1_d +M)=1+\text{tr}(M)+P(x,z),
  \end{equation*}
    where $\abs{P(x,z)}\leq C|z|^2$ for all $x\in\R^d$.
\end{lem}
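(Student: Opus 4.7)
The plan is to use the elementary symmetric polynomial expansion of the determinant. For any $d\times d$ matrix $M$ one has the identity
$$\det(1_d + M) = \sum_{k=0}^{d} e_k(M),$$
where $e_0(M)=1$, $e_1(M)=\operatorname{tr}(M)$, and for $k\geq 2$ the quantity $e_k(M)$ is the $k$-th elementary symmetric polynomial in the eigenvalues of $M$, equivalently the sum of the principal $k\times k$ minors of $M$. This suggests defining
$$P(x,z) := \sum_{k=2}^{d} e_k\bigl(M(x,z)\bigr),$$
after which the required decomposition holds by construction; only the bound on $P$ remains to be verified.

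The bound follows from the uniform Lipschitz estimate in (H2). Indeed, $|D_y p(y,z)|\leq K|z|$ for every $y\in\R^d$, so evaluating at $y=y(x,z)$ yields $\|M(x,z)\|\leq K|z|$ uniformly in $x$. Each principal $k\times k$ minor of $M$ is a polynomial of degree $k$ in the entries of $M$, hence $|e_k(M)|\leq C_{d,k}\|M\|^k \leq C_{d,k} K^k |z|^k$. Summing and using $|z|<r<1$, so that $|z|^k \leq |z|^2$ for $k\geq 2$,
$$|P(x,z)| \leq \sum_{k=2}^{d} C_{d,k} K^k |z|^k \leq C(d,K,r)\,|z|^2,$$
with the constant independent of $x$.

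There is no real obstacle here: this is standard multilinear algebra combined with the global (in $y$) Lipschitz bound from (H2). Note that the stronger local bound on $D_y^2 p$ is not needed for this lemma, and that the estimate is genuinely uniform in $x$ because (H2) gives a uniform bound on $D_y p$, not just a bound on bounded sets in $y$.
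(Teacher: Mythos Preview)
Your proof is correct and is precisely the argument the paper has in mind: the paper's proof just says this follows from ``the definition of determinant, the definition of $M$ and assumption (H2)'' and cites \cite{Brooks2006} for the symmetric-polynomial expansion of $\det(1_d+M)$, which is exactly the identity $\det(1_d+M)=\sum_{k=0}^d e_k(M)$ you wrote out. You have simply filled in the details the paper omits, including the observation that uniformity in $x$ comes from the global bound $|D_y p(y,z)|\leq K|z|$ in (H2).
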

\begin{proof}
  This is easily seen by the definition of determinant, the definition of $M$
  and assumption (H2).
  One can also refer to Section 2 of \cite{Brooks2006}. The constant $C$ is uniform in $x$ by (H2).
\end{proof}

\begin{lem}\label{lem bound 1-m(x,z)}
    There exists $C>0$ such that
  \begin{equation*}
    \abs{m(x,z)-1}\leq C|z| \qquad\text{for all}\qquad (x,z)\in U_r.
  \end{equation*}
\end{lem}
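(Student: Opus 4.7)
The plan is to combine the identity
\[
    m(x,z)=\frac{1}{\det\bigl(1_d+(D_y p)(y(x,z),z)\bigr)}
\]
from \eqref{m-eq} with the expansion provided by Lemma \ref{lem decomps 1+M}. Writing $M=M(x,z):=(D_y p)(y(x,z),z)$, that lemma gives
\[
    \det(1_d+M)=1+\tr(M)+P(x,z),\qquad |P(x,z)|\leq C|z|^2.
\]
By (H2) we have $|M|\leq K|z|$, so $|\tr(M)|\leq dK|z|$. Combining these two bounds yields
\[
    \bigl|\det(1_d+M)-1\bigr|\leq dK|z|+C|z|^2.
\]

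Next I would show that $\det(1_d+M)$ is uniformly bounded away from zero on $U_r$. Since $|z|<r<1/(4dK)$, the estimate above gives $|\det(1_d+M)-1|\leq \frac14+Cr$, so for $r$ small enough (shrinking $r_0$ if necessary, which is harmless since this is already the regime of Proposition \ref{prop Ir and Jr well def} (b)) we may assume $\det(1_d+M)\geq \tfrac12$. This nondegeneracy is essentially equivalent to Lemma \ref{lem bound of m(x,z)}, so it is already in hand.

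The conclusion is then a one-line calculation:
\[
    |m(x,z)-1|=\left|\frac{1}{\det(1_d+M)}-1\right|
    =\frac{|\det(1_d+M)-1|}{|\det(1_d+M)|}
    \leq 2\bigl(dK|z|+C|z|^2\bigr)\leq C'|z|,
\]
where the last bound uses $|z|<r<1$ to absorb the quadratic term.

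I do not expect any serious obstacle here; the estimate is essentially a first-order Taylor expansion of $\det(1_d+M)$ about $M=0$, made uniform in $x$ by the global-in-$x$ bound in (H2) on $D_y p$. The only mild point worth noting is that the constant $C'$ does not depend on $x$ precisely because (H2) provides an $x$-independent bound on $|D_y p(y,z)|$ (and likewise for the quadratic remainder $P$ in Lemma \ref{lem decomps 1+M}), which is why the final bound is uniform over all of $U_r$ rather than only locally.
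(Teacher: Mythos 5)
Your proof is correct, and the core computation is the same as the paper's: combine \eqref{m-eq} with Lemma \ref{lem decomps 1+M} to get $|\det(1_d+M)-1|\leq dK|z|+C|z|^2$, then divide by a uniform lower bound on $\det(1_d+M)$. The only place you diverge from the paper is in obtaining that lower bound. The paper proves $\det(1_d+M)\geq 2^{-d}$ directly for all $r<1/(4dK)$ by a diagonal-dominance argument (Price's theorem, \cite{price1951}), noting that each entry of $M$ is $\leq 1/(4d)$ in modulus so $1_d+M$ is strictly diagonally dominant. You instead either shrink $r$ further or appeal to Lemma \ref{lem bound of m(x,z)}. Your second justification (citing Lemma \ref{lem bound of m(x,z)}) is the cleaner of the two and is perfectly legitimate, since that lemma is stated and proved before the present one; it makes your proof slightly shorter than the paper's, which in effect re-derives the lower bound rather than citing the earlier lemma. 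Your first justification (shrink $r_0$) is a touch weaker in principle because the lemma as stated is for the whole regime $r<1/(4dK)$, though in practice the parameter $r$ is free to be taken as small as needed elsewhere; but since you also offer the cleaner route, there is no gap.
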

\begin{proof}
    Denote
    \[
        D_y(y+p(y,z))=1_d +D_y p(y,z)=:1_d+M.
    \]
    Then by definition
     \begin{align*}
          \abs{m(x,z)-1} & =\abs{\frac{1}{\det(1_d +M)}-1}
      = \abs{\frac{1-\det(1_d +M)}{\det(1_d +M)}}. 
    \end{align*}
    By Lemma \ref{lem decomps 1+M} and (H2),
    $\det(1_d +M)=1+\text{tr}(M)+P(x,z)$, and then for 
    $|z|<r$,
    \[
       |\det(1_d +M)-1|\leq |\text{tr}(M)+P(x,z)|\leq C|z|.
    \]
    The proof is then complete if we can get a lower bound on $|\det(1_d
    +M)|$.

    We claim that $\det(1_d +M)\geq 2^{-d}$. For each entry in $M$ and $|z|<r=\frac{1}{4dK}$,
    \[
        \abs{(\partial_{y_i} p_j)(y,z)}\leq K|z|\leq \frac{1}{4d},
    \]
    and the matrix $1_d +M$ is diagonally dominant.
    By Theorem 1 in \cite{price1951},
    \[
        \det(1_d +M)\geq \prod_{i=1}^d(|\alpha_{ii}|-\beta_i),
    \]
    where $\alpha_{ii}=1+(\partial_{y_i}p_i)(y,z)$ and
    $\beta_i=\sum_{j=i+1}^d\abs{(\partial_{y_i}p_j)(y,z)}$.
    Hence $\abs{\alpha_{ii}} \geq 1-\frac{1}{4d}$ and $\beta_i\leq \frac{1}{4}$, and thus
        $ |\alpha_{ii}|-\beta_i\geq 1-\frac{1}{4d}-\frac{1}{4}\geq \frac{1}{2}$
     and $\det(1_d +M)\geq 2^{-d}.$ The proof is complete.
\end{proof}

\begin{proof}[Proof of Proposition \ref{prop Ir and Jr well def} \textbf{(b)}]
    First note that since $r<\frac1{4dK}$ and the supports of $u$ and
    $I_r$ are compact, we only need to consider $x,z$ on compact sets
    depending on $u$ (and $p$, $q$ -- see below) but not on $r$. With this in mind we
    bound the different terms $I_r$, see \eqref{eq I}. For the first integral,
    \begin{align*}
       & \abs{\int_{{|z|<r}}[u(x-q(x,z))-u(x)+q(x,z)D u(x)]  m(x,z)\nu
         (dz)} \\
            &= \abs{\int_{{|z|<r}} \int_0^1 (1-\theta)q^T(x,z) [D^2 u(x)] q(x,z) d\theta  m(x,z)\nu (dz)}\\
      & \leq  C(u)\int_{{|z|<r}} |z|^2\nu (dz)<\infty.
    \end{align*}
    Here we also used that $q$ is bounded on compact sets. For the second
    integral, we keep in mind that $x=y+p(y,z)$. The integrand is then
    \begin{align*}
        & p(x,z)-q(x,z) m(x,z) \\
       &= p(x,z)-q(x,z)+q(x,z)\left(1- m(x,z)\right) \\
       &= p(x,z)-p(x-q(x,z),z)+q(x,z)\left(1- m(x,z)\right)\\
       &= (D_yp)(x-\theta q(x,z),z) q(x,z) +q(x,z)\left(1- m(x,z)\right)\\
       &= q(x,z)[ (D_yp)(x-\theta q(x,z),z)+ (1- m(x,z))].
    \end{align*}
    Hence by (H2) and Lemmas \ref{lem est of q(x,z)} and
    \ref{lem bound 1-m(x,z)}, for $x,z$ in the compact,
    \begin{align*}
        \abs{p(x,z)-q(x,z) m(x,z)}
            \leq  C(u)|z|(|z|+|z|),
    \end{align*}
    and hence
    \begin{align*}
       & \abs{\int_{{|z|<r}} (D u(x))^T\left[p(x,z)-q(x,z)
        m(x,z)\right]\nu (dz)} \leq C(u)\int_{{|z|<r}} |z|^2 \nu(dz).
    \end{align*}

    For the third integral,
    we take $f(\cdot,z):=(\text{div}_y p)(\cdot,z)$, and note that integrand
    \begin{align*}
       &  m(x,z)+\text{div}_x p(x,z)-1  = m(x,z)+f(x,z)-1 \\
       & = [ m(x,z)+f(y(x,z),z)-1]
            +[f(x,z)- f(y(x,z),z)].
    \end{align*}
    The last term can be estimated by Lemma \ref{eq est div(p-q)},
    \[
        |f(x,z)- f(y(x,z),z)|\leq C(u)|z|^2.
    \]
    For the first term, recall that $M=M(x,z)=(D_y p)(y(x,z),z)$ and note that
    $\text{tr}(M)=f(y(x,z),z)$.
    Then by Lemma \ref{lem decomps 1+M},
    \begin{align*}
        m(x,z)+f(y(x,z),z)-1
      & =\det(1_d+M)^{-1}+\text{tr}(M)-1\\
      & = \frac{1}{1+\text{tr}(M)+P(x,z)}+\text{tr}(M)-1\\
      & =\frac{-P(x,z)+\text{tr}(M)P(x,z)+(\text{tr}(M))^2}{1+\text{tr}(M)+P(x,z)}.
    \end{align*}
    By Lemma \ref{lem decomps 1+M} again,
    \begin{align*}
      \abs{-P(x,z)+\text{tr}(M)P(x,z)+(\text{tr}(M))^2}  \leq C|z|^2\quad\text{and}\quad|M|+|P(x,z)|\leq C|z|
    \end{align*}
    where $C$ does not depend on $x$ and hence
    the support of $u$. We may therefore take a sufficiently small $r_0<\frac1{4dK}$ (independently
    of $u$) such that for $|z|<r<r_0$,
    $$1+\text{tr}(M)+P(x,z) \geq \frac 12.$$

    Hence $\abs{m(x,z)+f(y(x,z),z)-1}\leq C|z|^2$, and it follows that
    the third integral in \eqref{eq I} is well defined.

    From the above estimtates and the compactness of the support, it then
    follows that  there is $r_0>0$ such that $\|I_ru\|_1=\int_{\Rd}
    |I_r u(x)|dx<\infty$ for any $0<r<r_0$ and any $u \in D(L)$. The proof
    is complete.
\end{proof}

\begin{rem}
    In the proof of Lemma 2.4.3 in  in \cite{garroni2002second}, the
    authors claim that if $M$ is a symmetric matrix such that $\det(1_d
    +M)\neq 0$, then
        \[
            \abs{\frac{1}{\det(1_d +M)}-1-\text{tr}(M)}\leq C\norm{M}^2.
        \]
    If we could take $M=D_y p(y,z)$ in this inequality, it would simplify
    our proofs. However, in our setting $D_y p(y,z)$ is not symmertric in general.
\end{rem}

\subsection{Proposition \ref{prop Ir and Jr well def} (c) -- the operator $J_r$}
\label{subsec well-def Jr} We start by two auxilliary results.
\begin{lem}\label{lem J_r u sig-add}
    Assume (H3) and $u\in L^1(\R^d)$. Then $J_ru$ can be represented
    by a bounded, absolutely continuous, and finitely additive signed
    measure $\lambda_u$ such that
    \begin{align}\label{lambda_u}\langle J_ru,
        f\rangle=\int_{\Rd} f(x) \lambda_u(dx) \qquad \text{for all}\qquad
        f\in L^\infty(\R^d).
    \end{align}
    Moreover, the total variation norm \ $|\lambda_u|(\R^d)
    \leq 2\|u\|_{L^1}\nu(\{|z|>r\}).$
\end{lem}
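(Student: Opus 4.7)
The plan is to construct $\lambda_u$ explicitly via Fubini applied to the duality definition $J_r=(J_r^*)^*$, and then read off the total variation bound. For $f\in L^\infty(\Rd)$ with a chosen Borel representative, expanding the duality gives
\[\langle J_r u,f\rangle=\int_{\Rd}u(y)\,J_r^*f(y)\,dy=\int_{\Rd}u(y)\int_{|z|\geq r}[f(y+p(y,z))-f(y)]\,\nu(dz)\,dy.\]
The integrand is dominated by $2\|f\|_\infty|u(y)|$, and since (H3) forces $\nu(\{|z|\geq r\})<\infty$ and $u\in L^1$, Fubini's theorem applies.

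Next, for each fixed $z$ with $|z|\geq r$, the map $T_z(y):=y+p(y,z)$ is Borel measurable by (H2), so the pushforward $\mu^u_z:=T_z\#(u\,dy)$ is a bounded Borel signed measure with $|\mu^u_z|(\Rd)\leq\|u\|_1$, via the Jordan decomposition $u=u_+-u_-$. Interchanging the order of integration converts the $y$-inner integral into the pairing $\int f\,d\mu^u_z-\int f\,u\,dy$, which motivates the definition
\[\lambda_u:=\int_{|z|\geq r}\bigl(\mu^u_z-u\,dy\bigr)\,\nu(dz),\]
interpreted as a measure-valued integral in the Banach space of bounded signed Borel measures. Countable additivity of $\lambda_u$ follows by dominated convergence applied to the uniform bound $|\mu^u_z-u\,dy|\leq 2\|u\|_1$ together with $\nu(\{|z|\geq r\})<\infty$. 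The total variation is estimated by
\[|\lambda_u|(\Rd)\,\leq\,\int_{|z|\geq r}\bigl(|\mu^u_z|(\Rd)+\|u\|_1\bigr)\nu(dz)\,\leq\, 2\|u\|_1\nu(\{|z|\geq r\}),\]
which is the announced bound, and the representation $\langle J_r u,f\rangle=\int f\,d\lambda_u$ is just the Fubini identity read backwards.

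The delicate point is the absolute-continuity assertion: without global invertibility of the jump map $T_z$, the individual pushforwards $\mu^u_z$ may carry singular parts, so $\lambda_u$ need not be absolutely continuous as a classical Borel measure. The correct interpretation is through the Yosida--Hewitt identification $(L^\infty(\Rd))^*=ba(\Rd)$: the functional $\Phi_u:f\mapsto\langle u,J_r^*f\rangle$ is bounded on $L^\infty$ with norm at most $2\|u\|_1\nu(\{|z|\geq r\})$ (by the trivial $L^\infty$-bound $\|J_r^*f\|_\infty\leq 2\|f\|_\infty\nu(\{|z|\geq r\})$ paired with $u\in L^1$), hence corresponds uniquely to a bounded, absolutely continuous (in the finitely additive sense), finitely additive signed measure on $\Rd$; this is the intended $\lambda_u$, and on bounded Borel sets it agrees with the Fubini construction above. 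I expect the main technical obstacle to be precisely the verification that $\Phi_u$ is well defined on $L^\infty$-equivalence classes despite the potential singular masses of $\mu^u_z$, which must be handled by passing to the $ba$-theoretic viewpoint rather than insisting on a Borel-measure interpretation of $\lambda_u$.
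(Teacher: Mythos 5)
Your final argument coincides with the paper's: bound $\|J_r^*\|_{L^\infty\to L^\infty}\leq 2\nu(\{|z|\geq r\})$ directly from the definition, deduce that the adjoint $J_r=(J_r^*)^*$ is bounded on $(L^\infty)'$ with the same norm, and then invoke the Dunford--Schwartz duality (Theorem IV.8.16 of \cite{schwartz1958linear}, which is what you call the Yosida--Hewitt identification) to represent $J_ru\in(L^\infty)'$ by a bounded, absolutely continuous, finitely additive signed measure, with the total variation bound coming from the isometry of that identification. This is exactly the paper's proof.

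The preliminary construction you attempt --- building $\lambda_u$ as a measure-valued integral $\int_{|z|\geq r}(\mu^u_z-u\,dy)\,\nu(dz)$ of pushforwards $\mu^u_z=T_z\#(u\,dy)$ --- is not in the paper and, as you yourself observe, is not safe: without invertibility of the large-jump maps $T_z(y)=y+p(y,z)$, the $\mu^u_z$ may carry singular parts, and a $\sigma$-additive Borel measure with a singular part cannot agree on $L^\infty$-equivalence classes with the absolutely continuous finitely additive $\lambda_u$ that the duality produces. You are right to abandon that detour; it contributes nothing to the final argument. Your closing caveat --- whether $\Phi_u(f)=\langle u,J_r^*f\rangle$ descends to $L^\infty$-equivalence classes, i.e.\ whether $J_r^*$ is well defined modulo Lebesgue-null modifications of $f$ --- is a genuine subtlety (it amounts to the maps $T_z$ having Luzin's property $(\mathrm{N}^{-1})$ for $\nu$-a.e.\ $|z|\geq r$), but the paper does not address it either beyond invoking the Borel-representative convention of Remark \ref{rem B and L measurable}, so you should not regard it as a gap in your write-up specific to your approach. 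It would have been cleaner to state that well-definedness as part of the definition of $J_r^*$ rather than as an anticipated obstacle, but substantively your proof matches the paper's.
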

\begin{proof}
    This is quite standard. By the definition and (H3), $J_r^*$ is a
    bounded linear operator on $L^{\infty}(\Real^d)$, and $\norm{J_r^*
    }\leq 2\nu (\set{|z|\geq r})$ since $\abs{J_r^* f(y)}\leq 2\norm{f}_{\infty}
                \nu (\set{|z|\geq r})$ for all $y\in \Real^d$.
    Hence its adjoint operator $J_r$ is a bounded linear operator
    on the dual space of $L^{\infty}(\Real^d)$ with
    $\norm{J_r}=\norm{J_r^* }$, cf. Theorem 3.3 in
    \cite{schechter2001principles}. Hence also $\norm{J_r}\leq 2\nu
    (\set{|z|\geq r})$, and since $\|u\|_{(L^\infty)'}=\|u\|_{L^1}$
    for $u\in L^1$, we have
    $$\norm{J_ru}_{(L^\infty)'}\leq \|J_r\|\|u\|_{(L^\infty)'}\leq
    2\|u\|_{L^1}\nu (\set{|z|\geq r}).$$ Then by Theorem IV.8.16 in
    \cite{schwartz1958linear},
    there is an isometric isomorphism between the dual of
    $L^{\infty}(\Real^d)$ and the bounded, absolutely continuous,
    finitely additive signed (ACFAS) measures. That is, $J_ru\in
    \big(L^\infty(\Rd)\big)'$ corresponds uniquely to a ACFAS measure
    $\lambda_u$ such that \eqref{lambda_u} holds.
    The integral is here defined in the standard way by
    first defining it for finitely(!) valued simple functions and then
    take the limit of total variation. The isometry part of the result
    means that the norm of $J_ru$ equals the total variation
    of $\lambda_u$, $\|J_ru\|_{(L^\infty)'}=|\lambda_u|(\R^n).$ The proof is
    complete.
\end{proof}

We will need the following version of the Radon-Nikodym Theorem.
\begin{thm}[Theorem 10.39 in \cite{wheeden1977measure}]\label{thm Radon-Nkd}
    Let $\mu$ be a (finite and countably) additive set function. If $\mu$ is
    absolutely continuous with respect to the Lebesgue measure,
    then there exists an integrable function $w\in L^1(\Real^d,dx)$,
    such that
    \[
        \mu(E)=\int_E w(x)dx\quad\text{ for all }E\in \Sigma,
    \]
    where $\Sigma$ is the $\sigma$-algebra of all Lebesgue measurable sets.
\end{thm}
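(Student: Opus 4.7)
The plan is to recognize the statement as the classical Radon--Nikodym theorem in $\R^d$ for a finite, countably additive set function that is absolutely continuous with respect to Lebesgue measure. I would prove it by von Neumann's Hilbert-space argument combined with a localization, since Lebesgue measure is not finite on all of $\R^d$ and so the standard one-shot Riesz argument cannot be applied globally.

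First, I would localize to balls. On each ball $B_R := \set{|x|\leq R}$, both the restriction of $\mu$ and Lebesgue measure $dx$ are finite, so $\rho_R := \mu + dx$ is a finite positive measure on $B_R$. The map $\Phi_R(f) := \int_{B_R} f\,d\mu$ is a bounded linear functional on $L^2(B_R,\rho_R)$, with $|\Phi_R(f)|\leq \mu(B_R)^{1/2}\|f\|_{L^2(\rho_R)}$ by Cauchy--Schwarz. The Riesz representation theorem then produces a unique $g_R \in L^2(B_R,\rho_R)$ with
\[
\int_{B_R} f\,d\mu \;=\; \int_{B_R} f g_R\,d\rho_R \qquad \text{for every } f\in L^2(B_R,\rho_R).
\]

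Second, I would test this identity against indicators. Choosing $f = \chi_{\set{g_R<0}}$ and then $f = \chi_{\set{g_R\geq 1}}$, and using $\mu\ll dx$, I would conclude that $0\leq g_R<1$ a.e.\ on $B_R$. Rearranging the identity gives $\int_E (1-g_R)\,d\mu = \int_E g_R\,dx$ for every measurable $E\subset B_R$; applying this to simple approximations of $\chi_E/(1-g_R)$ produces a nonnegative $w_R := g_R/(1-g_R)$ on $B_R$ with $\mu(E) = \int_E w_R\,dx$ for all measurable $E\subset B_R$, hence $w_R\in L^1(B_R,dx)$ with $\|w_R\|_{L^1(B_R)} = \mu(B_R)$.

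Third, I would glue and extend. Uniqueness of the Radon--Nikodym derivative on each ball forces $w_{R'}|_{B_R} = w_R$ a.e.\ whenever $R<R'$, so there is a well-defined measurable $w\geq 0$ on $\R^d$. Monotone convergence together with $\int_{B_R} w\,dx = \mu(B_R) \leq \mu(\R^d)<\infty$ yields $w\in L^1(\R^d,dx)$, and the identity $\mu(E) = \int_E w\,dx$ for general measurable $E\subset\R^d$ follows by applying countable additivity of $\mu$ and monotone convergence to $E\cap B_R$ as $R\to\infty$. The main obstacle is precisely the infiniteness of Lebesgue measure on $\R^d$, which forces the localize-and-glue procedure; the other delicate point is the step in which absolute continuity of $\mu$ is used to rule out $g_R\equiv 1$ on a set of positive $\mu$-measure, since otherwise the formula $w=g_R/(1-g_R)$ would not make sense.
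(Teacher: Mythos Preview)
The paper does not prove this theorem; it is quoted verbatim as Theorem 10.39 of Wheeden--Zygmund and used as a black box in the proof of Proposition \ref{prop Ir and Jr well def} (c). So there is no ``paper's own proof'' to compare against, and any correct argument is acceptable here.

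Your outline is von Neumann's Hilbert-space proof with a $\sigma$-finite localization, and for a nonnegative $\mu$ it is correct. There is one genuine gap relative to the stated hypothesis and the paper's use of the result: the theorem is stated for a finite countably additive \emph{set function} (i.e.\ a finite signed measure), and in the application $\lambda_u$ is signed. Your construction $\rho_R := \mu + dx$ is only a positive measure when $\mu\geq 0$, and the subsequent Cauchy--Schwarz bound and the inequalities $0\leq g_R<1$ all rely on this positivity. The fix is standard: apply the Jordan decomposition $\mu=\mu^+-\mu^-$, run your argument for each of the finite positive measures $\mu^\pm$ (both absolutely continuous with respect to $dx$ since $|\mu|\ll dx$), obtain densities $w^\pm\in L^1(\R^d,dx)$, and set $w:=w^+-w^-$. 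With that one sentence added, your proof is complete.
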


\begin{proof}[Proof of Proposition \ref{prop Ir and Jr well def} \textbf{(c)}]\

    \noindent 1.\quad By Lemma \ref{lem J_r u sig-add}, $J_ru$ can be
    represented by a
    bounded, absolutely continuous, and finitely additive signed measure
    $\lambda_u$ such that for any measurable set $E\subset \Rd$,
    \begin{equation}\label{eq iso meas and op}
        \abs{\lambda_u}(E)<\infty\quad\text{ and }\quad
            \lambda_u(E) =\int_{\Rd} \chi_E(x)\lambda_u(dx)
            =\langle J_ru, \chi_E\rangle.
    \end{equation}

    \noindent 2.\quad We check that $\lambda_u$ is in fact also countably additive.
    Suppose $\set{A_k \subset \Rd:\, k=1,2,3,\dots}$ is a sequence of pair-wise
    disjoint Lebesgue measurable sets.
    Then $\chi_{\cup_k A_k}(y)=\sum_k \chi_{A_k}(y)$,
    and by \eqref{eq iso meas and op},
    \begin{align*}
      \lambda_u(\cup_k A_k) & = \langle J_r u,\,\chi_{\cup_k
        A_k}\rangle
            = \langle u,\,J_r^* \chi_{\cup_k A_k}\rangle \\
       & = \int_{\Rd}u(x) \int_{|z|\geq r}\big[\chi_{\cup_k
         A_k}(x+p(x,z))-\chi_{\cup_k A_k}(x)\big] \nu(dz)dx\\
       & = \int_{\Rd}u(x)\int_{|z|\geq r}\sum_k \big[\chi_{A_k}(x+p(x,z))-\chi_{A_k}(x)\big] \nu(dz)dx\\
       & = \sum_k \int_{\Rd}u(x)\int_{|z|\geq r}\big[\chi_{A_k}(x+p(x,z))-\chi_{A_k}(x)\big] \nu(dz)dx\\
       & = \sum_k \lambda_u( A_k).
    \end{align*}
    In view of Remark \ref{rem B and L measurable},
    $\chi_{A_k}(x+p(x,z))$ is $\nu$-measurable for almost
    every $x$, and integration and summation commute by the dominated
    convergence theorem since $u\in L^1$ and (H3) holds.

    \noindent 3.\quad      By the Radon-Nikodym theorem,
    there is a unique $w_u\in L^1(\Real^d)$ such that $\lambda_u(A)=\int_{A}w_u(x)dx$
    for  measurable $A\subset \R^d$. In other words, for any $u\in
    L^1(\R^d)$, we may identify $J_ru$ with $w_u$. Moreover, by the
    definition of total variation of $\lambda_u$ and Lemma
    \ref{lem J_r u sig-add} again, $\|w_u\|_{L^1}=|\lambda_u|(\R^d)\leq
    2\|u\|_{L^1}\nu(\{|z|>r\}).$ The proof is complete.
\end{proof}

\section{Dissipative operators -- proof of Theorem \ref{prop Ar+Ir and adj dspt}}\label{sec dspt of L}
This whole section is devoted to the proof that the operators $A_r, I_r,
J_r$ and their adjoints are dissipative, i.e. to prove Theorem
\ref{prop Ar+Ir and adj dspt}.

\subsection{Analysis on $I_r$}\label{subsec dspt of Ir}
Consider $u\in D(L)$, and let
\[
    V:=\set{x\in \Real^d: u(x)=0}.
\]
Denote $w:=|u|$, and  decompose
\[
    V^c=\set{ u(x)>0}\cup \set{u(x)<0}=:V^+\cup V^-.
\]

Denote $I_r:=S+T$, where $S$ is the principal non-Local term as
\begin{equation}\label{eq principal small jump op}
  S u(x):=\int_{{|z|<r}} [u(x-q(x,z))-u(x)+q(x,z)D u(x)] m(x,z)\nu (dz).
\end{equation}

\begin{lem}\label{lem Iu vs Iw}
    Assume (H2) and (H3). Then $A_r w$, $Tw$, and $Sw$ are well-defined on $V^c$.
    In addition, there exists a non-negative function $R(x)$ such that
  \begin{equation}\label{eq Sw =Su+R}
    Sw(x)= \begin{cases}
                R(x)+Su(x),    & x\in V^+, \\
                R(x)-Su(x),    & x\in V^-.
            \end{cases}
  \end{equation}
\end{lem}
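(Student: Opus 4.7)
My plan is to first check that $A_rw$, $Tw$, and $Sw$ each make pointwise sense on $V^c$ by exploiting the fact that $w=|u|$ coincides locally with $u$ on $V^+$ and with $-u$ on $V^-$, and then to compute $Sw-Su$ on $V^+$ and $Sw+Su$ on $V^-$ explicitly in order to read off a non-negative remainder $R$.

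For well-definedness, since $u\in D(L)=C_c^\infty(\R^d)$ the sets $V^+$ and $V^-$ are open, and in a neighborhood of any $x\in V^\pm$ one has $w=\pm u$, hence $w$ is $C^\infty$ locally on $V^c$. This immediately gives meaning to $A_rw(x)$ and $Tw(x)$, which only involve pointwise derivatives of $w$ up to second order. For $Sw(x)$ with $x\in V^+$ (the case $V^-$ being symmetric) I would choose $\delta>0$ with $B_\delta(x)\subset V^+$ and use Lemma \ref{lem est of q(x,z)} to find $\delta'>0$ such that $x-q(x,z)\in B_\delta(x)$ whenever $|z|<\delta'$. On this small-$|z|$ piece $w$ coincides with $u$ along the whole segment $[x,x-q(x,z)]$, so Taylor's theorem together with boundedness of $D^2u$ yields the standard quadratic bound $|w(x-q(x,z))-w(x)+q(x,z)Dw(x)|\le C|z|^2$; on the annulus $\delta'\le|z|<r$ the integrand is merely bounded, and $m(x,z)$ is bounded by Lemma \ref{lem bound of m(x,z)}. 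Combined with (H3), this gives absolute convergence of the integral defining $Sw(x)$.

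For the decomposition, on $V^+$ one has $w(x)=u(x)$ and $Dw(x)=Du(x)$, so direct subtraction inside the integrand of \eqref{eq principal small jump op} yields
\begin{align*}
Sw(x)-Su(x)=\int_{|z|<r}\bigl[|u(x-q(x,z))|-u(x-q(x,z))\bigr]\,m(x,z)\,\nu(dz),
\end{align*}
which is non-negative because $|a|-a=2a^-\ge 0$ and $m>0$ by Lemma \ref{lem bound of m(x,z)}. Symmetrically, on $V^-$ with $w(x)=-u(x)$ and $Dw(x)=-Du(x)$,
\begin{align*}
Sw(x)+Su(x)&=\int_{|z|<r}\bigl[|u(x-q(x,z))|+u(x-q(x,z))\bigr]\,m(x,z)\,\nu(dz)\\
&=2\int_{|z|<r}u^+(x-q(x,z))\,m(x,z)\,\nu(dz)\ge 0.
\end{align*}
Defining $R(x)$ piecewise as these non-negative right-hand sides on $V^+$ and $V^-$ produces a single non-negative function $R$ for which \eqref{eq Sw =Su+R} holds.

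The main technical obstacle is the well-definedness of $Sw$ at points of $V^c$ near the zero set $V$: because $w$ fails to be $C^2$ across $V$, the quadratic Taylor bound used for $Su$ is unavailable globally, and one must split $\{|z|<r\}$ into a small-$|z|$ region where $x-q(x,z)$ remains inside the open set $V^+$ or $V^-$ (so the quadratic bound does apply) and a moderate-$|z|$ region where mere boundedness of the integrand plus (H3) suffices. Once this bookkeeping is done, the sign analysis producing $R\ge 0$ is straightforward.
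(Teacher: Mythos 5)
Your proof is correct and reaches the same decomposition as the paper, but by a cleaner route. The paper derives \eqref{eq Sw =Su+R} by splitting $\{|z|<r\}$ into the three sets $F_x^\pm:=\{|z|<r:u(x-q(x,z))\gtrless 0\}$ and $F_x^0$, rewriting $Sw$ piece by piece in terms of the corresponding pieces of $Su$, and then collecting the extra contribution into $R(x)=-2\int_{F_x^-}u(x-q)m\,\nu(dz)$ on $V^+$ (similarly on $V^-$). You instead subtract $Su$ from $Sw$ directly inside the integrand, using that $w-u$ and its gradient vanish at $x$ when $x\in V^+$, and read off $R(x)=2\int_{|z|<r}u^-(x-q(x,z))m(x,z)\,\nu(dz)$ via the identity $|a|-a=2a^-$. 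These two expressions for $R$ are identical (the integral over $F_x^-$ is exactly where $u^-(x-q)$ is nonzero), but your derivation avoids splitting into possibly non-absolutely-convergent pieces and is shorter. You also spell out the well-definedness of $Sw(x)$ at points near $V$ (split $|z|$ at a threshold $\delta'$ depending on $\mathrm{dist}(x,V)$ and $|x|$ so that the small-$|z|$ part stays in $V^+$ and the annulus carries finite $\nu$-mass), a point the paper handles more tersely by simply asserting the extra term is pointwise finite. Both approaches are valid; yours is a modest streamlining of the same underlying idea.
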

\begin{proof}
    Obvious $A_r w$ and $Tw$ are well-defined on $V^c$,
    since they are local operators and $V^c$ is an open set
    where $w=\pm u$.

    For $x\in V^c$. Recall $x=y+p(y,z)$, and $m(x,z)\geq 0$. Hence
    for any $|z|<r$, denote
    \begin{align*}
      F_x^+:= & \set{z<|r|: u(x-q(x,z))>0},\\
      F_x^-:= & \set{z<|r|: u(x-q(x,z))<0},\\
      F_x^0:= & \set{z<|r|: u(x-q(x,z))=0}.
    \end{align*}

    If $x\in V^+$, that is, $u(x)>0$, we observe that a neighborhood of $0$
    is contained in $F_x^+$. Then there holds
    \begin{align*}
      Sw(x)  = & \int_{{|z|<r}} [w(x-q(x,z))-w(x)+q(x,z)D w(x)] m(x,z)\nu (dz) \\
      = & \int_{F_x^+} [u(x-q(x,z))-u(x)+q(x,z)D u(x)] m(x,z)\nu (dz)\\
            & +\int_{F_x^-} [-u(x-q(x,z))-u(x)+q(x,z)D u(x)] m(x,z)\nu (dz)\\
            & +\int_{F_x^0} [0 -u(x)+q(x,z)D u(x)] m(x,z)\nu (dz)\\
      = & \int_{F_x^+} [u(x-q(x,z))-u(x)+q(x,z)D u(x)] m(x,z)\nu (dz)\\
            & +\int_{F_x^-} [u(x-q(x,z))-u(x)+q(x,z)D u(x)\\
            & \qquad -2u(x-q(x,z))] m(x,z)\nu (dz)\\
            & +\int_{F_x^0} [0 -u(x)+q(x,z)D u(x)] m(x,z)\nu (dz)\\
      = & Su(x)-2\int_{F_x^-} u(x-q(x,z))m(x,z)\nu (dz).
    \end{align*}
    The last term is then non-negative and point-wisely finite.

    Similarly, if $x\in V^-$,
    \begin{align*}
      Sw(x)  = & \int_{{|z|<r}} [w(x-q(x,z))-w(x)+q(x,z)D w(x)] m(x,z)\nu (dz)  \\
         = & \int_{{|z|<r}} [w(x-q(x,z))+u(x)-q(x,z)D u(x)] m(x,z)\nu (dz) \\
         = & \int_{F_x^+} [u(x-q(x,z))+u(x)-q(x,z)D u(x)] m(x,z)\nu (dz)\\
            & +\int_{F_x^-} [-u(x-q(x,z))+u(x)-q(x,z)D u(x)] m(x,z)\nu (dz)\\
            & +\int_{F_x^0} [0 +u(x)-q(x,z)D u(x)] m(x,z)\nu (dz)\\
         = & -\int_{{|z|<r}} [u(x-q(x,z))-u(x)+q(x,z)D u(x)] m(x,z)\nu (dz)\\
            & + 2\int_{F_x^+} u(x-q(x,z))m(x,z)\nu (dz)\\
         = & -Su(x)+ 2\int_{F_x^+} u(x-q(x,z))m(x,z)\nu (dz).
    \end{align*}

    Therefore we obtained the following relationship
    \begin{align*}
      S w(x)&=\begin{cases}
              S u(x)-2\int_{F_x^-} u(x-q(x,z))m(x,z)\nu (dz),& x\in V^+, \\
              -S u(x)+ 2\int_{F_x^+} u(x-q(x,z))m(x,z)\nu (dz),& x\in V^-.
            \end{cases} \\
       &=: \begin{cases}
              S u(x)+R(x),    & x\in V^+, \\
              -S u(x)+ R(x), & x\in V^-.
            \end{cases}
    \end{align*}
    The proof is complete.
\end{proof}

\begin{lem}\label{lem int Iw leq 0}
    Assume (H2) and (H3). Then $\int_{V^c} I_r w(x)dx \leq 0$.
\end{lem}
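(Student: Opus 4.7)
The plan is an approximation argument based on the conservation identity $\int_{\R^d} I_r v\,dx = 0$ for every $v \in D(L)$. I take the smooth convex approximant $\phi_\epsilon(s) := \sqrt{s^2+\epsilon^2} - \epsilon$ of $|\cdot|$, which satisfies $\phi_\epsilon(0) = \phi_\epsilon'(0) = 0$, $\phi_\epsilon \to |\cdot|$ uniformly, and $\phi_\epsilon' \to \mathrm{sign}$ pointwise off the origin. Setting $w_\epsilon := \phi_\epsilon(u)$ gives $w_\epsilon \in C_c^\infty(\R^d) \subset D(L)$, so $I_r w_\epsilon \in L^1(\R^d)$ by Proposition~\ref{prop Ir and Jr well def}~(b).

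The conservation identity follows from the duality $\int I_r v\,dx = \langle v, I_r^* 1\rangle$ together with the pointwise vanishing $I_r^* 1 \equiv 0$ (the integrand defining $I_r^*$ vanishes identically on constants), or equivalently by integrating each term of \eqref{eq I} by parts on $\R^d$. Applied at $v = w_\epsilon$ and split over $V$ and $V^c$, $\int_V I_r w_\epsilon\,dx + \int_{V^c} I_r w_\epsilon\,dx = 0$. At every $x \in V$, the choices $\phi_\epsilon(0) = \phi_\epsilon'(0) = 0$ force $w_\epsilon(x) = 0$ and $D w_\epsilon(x) = 0$, collapsing \eqref{eq I} to
\[
    I_r w_\epsilon(x) \;=\; \int_{|z|<r} w_\epsilon\bigl(x-q(x,z)\bigr)\,m(x,z)\,\nu(dz) \;\geq\; 0,
\]
where the non-negativity uses $w_\epsilon \geq 0$ and $m \geq 0$ (Lemma~\ref{lem bound of m(x,z)}). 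Therefore $\int_{V^c} I_r w_\epsilon\,dx \leq 0$ for every $\epsilon > 0$.

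The remaining step is the limit $\epsilon \to 0^+$, which I execute via Fatou's lemma. Writing
\[
    I_r w_\epsilon = \phi_\epsilon'(u)\,I_r u + R_\epsilon + \bigl[\phi_\epsilon(u) - u\phi_\epsilon'(u)\bigr]\int_{|z|<r}\bigl[m + \text{div}_x p - 1\bigr]\nu(dz),
\]
with $R_\epsilon \geq 0$ the $\nu$-integrated convex-excess of $\phi_\epsilon$ (mirroring the $R$ of Lemma~\ref{lem Iu vs Iw}), and using the uniform bound $|\phi_\epsilon(s) - s\phi_\epsilon'(s)| \leq \epsilon$ together with the boundedness of the last integral on the fixed compact effective support of $I_r w_\epsilon$, I obtain the $\epsilon$-uniform integrable lower bound $I_r w_\epsilon \geq -|I_r u| - C$. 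Pointwise a.e.\ convergence $I_r w_\epsilon(x) \to I_r w(x)$ on $V^c$ is obtained by splitting the $\nu$-integration at $|q(x,z)| \leq |u(x)|/(2\norm{Du}_\infty)$ (where the convex-excess is $\nu$-dominated by an $\epsilon$-uniform $C_x|z|^2$, integrable by (H3)) and its complement (a set of finite $\nu$-mass on which the integrand is bounded by $4\norm{u}_\infty$). Fatou's lemma then gives
\[
    \int_{V^c} I_r w\,dx \;\leq\; \liminf_{\epsilon \to 0^+} \int_{V^c} I_r w_\epsilon\,dx \;\leq\; 0.
\]

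The main obstacle is this limit passage: the naive $\nu$-dominating function for $S w_\epsilon$ involves $\norm{D^2 w_\epsilon}_\infty \lesssim 1/\epsilon$ and fails. The sign-preserving decomposition above, which isolates the $L^1$-piece $\phi_\epsilon'(u) I_r u$ and the non-negative excess $R_\epsilon$, together with Fatou in place of dominated convergence, is what makes the argument go through.
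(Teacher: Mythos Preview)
Your proof is correct and takes a genuinely different route from the paper's. Both arguments ultimately pass to a limit via Fatou's lemma with an integrable lower bound, but the approximation is carried out in different variables.

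The paper truncates the L\'evy measure near the origin, restricting to $\{r/n \leq |z| < r\}$ where $\nu$ is finite. On the truncated problem it applies Fubini, the change of variables $x \mapsto x-q(x,z)$ (which gives $\int w(x-q)m\,dx = \int w\,dx$), and the divergence theorem on $V^c$ to show $\int_{V^c}\int_{r/n\leq|z|<r} h(x,z)\,\nu(dz)\,dx \leq 0$. The divergence-theorem step requires approximating $V^c = \{u\neq 0\}$ by superlevel sets $\{|u|>\eps\}$ with $C^1$ boundary, invoking Sard's theorem to pick regular values $\eps$.

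You instead regularize the absolute value itself, setting $w_\epsilon = \phi_\epsilon(u) \in C_c^\infty$, and exploit the conservation identity $\int I_r v\,dx = \langle v, I_r^*1\rangle = 0$ together with the pointwise sign condition $I_r w_\epsilon \geq 0$ on $V$ (where $w_\epsilon$ and $Dw_\epsilon$ vanish, so only the non-negative term $\int w_\epsilon(x-q)m\,\nu(dz)$ survives). This sidesteps both the divergence theorem and the Sard argument; the price is the limit passage in $\epsilon$, where the naive $|z|^2$ dominating function for $Sw_\epsilon$ degenerates like $1/\epsilon$. Your decomposition into $\phi_\epsilon'(u)\,I_ru$ (dominated by $|I_ru|\in L^1$), the non-negative convex excess $R_\epsilon$, and an $O(\epsilon)$ zero-order remainder supported in a fixed compact set, combined with the splitting of the $\nu$-integral in $R_\epsilon$ at the $x$-dependent scale where $u(x-q(x,z))$ stays bounded away from zero (so that $\phi_\epsilon''$ is uniformly bounded there), handles this correctly.

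The two approaches buy the same inequality. Yours is somewhat more streamlined in that it never touches the boundary of $\{u\neq 0\}$, while the paper's truncation in $z$ stays closer to the explicit structure of the operator and reuses the change-of-variables machinery already set up in Section~\ref{subsec well def of I_r}.
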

\begin{proof}
    By definition, we can write for $x\in V^c$
    \begin{align*}
      I_r w(x) = & \int_{{|z|<r}} \Big([w(x-q(x,z))-w(x)+q(x,z)D w(x)]  m(x,z) \\
         & + (D w(x))^T [p(x,z)-q(x,z) m(x,z) ] \\
         & + w(x)  [ m(x,z) +\text{div}_x p(x,z)-1]\Big)\nu (dz)\\
         =: & \int_{{|z|<r}} h(x,z)\nu (dz).
    \end{align*}

    Next we note that there exist constants $C>0$ and $R>0$ such that
    \begin{equation}\label{eq lower bound for h(x,z)}
      h(x,z)\geq -C|z|^2\chi_{\set{|x|\leq R}}(x)=:-g(x,z).
    \end{equation}
    This is true in view of $u\in D(L)$, Lemma \ref{lem Iu vs Iw},
    and the discussion in Section \ref{subsec well def of I_r}.
    Evidently by the assumptions,
     $0\leq \int_{\Real^d}\int_{|z|<r} g(x,z)\nu(dz)dx<\infty$.
    So $g$ is an integrable lower bound.

    Then we truncate the integrand by defining
    \[
        h_n(x,z):=h(x,z)\chi_{\set{r/n \leq |z|<r}}(z),\qquad n=1,\,2,\,3,\dots
    \]
    Obviously $h_n(x,z) \geq \min\set{h(x,z),0}\geq -g(x,z)$,
    and $\lim_n h_n(x,z)=h(x,z)$,
    for all $(x,z)\in V^c\times \set{|z|<r}$.
    Then we claim that for all $n=1,\,2,\,3,\dots$
    \begin{equation}\label{eq int int h_n <=0}
      \int_{V^c}\int_{|z|<r} h_n(x,z)\nu(dz)dx \leq 0.
    \end{equation}

    With \eqref{eq lower bound for h(x,z)},
    \eqref{eq int int h_n <=0}, and the integrable lower bound $g(x,z)$,
    we can apply Fatou's Lemma and prove Lemma \ref{lem int Iw leq 0} by
    \begin{align*}
      \int_{V^c} I_r w(x)dx  & = \int_{V^c}\int_{|z|<r} h(x,z)\nu(dz)dx
        = \int_{V^c}\int_{|z|<r} \lim_n h_n(x,z)\nu(dz)dx\\
       & \leq \liminf_n \int_{V^c}\int_{|z|<r} h_n(x,z)\nu(dz)dx
         \leq \liminf_n 0 =0.
    \end{align*}

    The rest of the proof will be used to prove Claim \eqref{eq int int h_n <=0}.
    Observe that by definition
    \[
        \int_{V^c}\int_{|z|<r} h_n(x,z)\nu(dz)dx
            =\int_{V^c}\int_{{r/n\leq |z|<r}} h(x,z)\nu(dz)dx,
    \]
    and the L\'evy measure $\nu$ is no longer singular on
    the set $\set{r/n\leq |z|<r}$. Hence
    \begin{align}\label{eq int r/n<z h}
       &\notag \int_{V^c}\int_{r/n\leq |z|<r} h(x,z)\nu(dz)dx \\
       & =\int_{V^c}\int_{r/n\leq |z|<r}
            \big[ w(x-q(x,z))m(x,z)-w(x) +\text{div}_x (w (x)p(x,z)) \big] \nu(dz)dx
    \end{align}

    Then we consider the first two terms in \eqref{eq int r/n<z h}.
    \begin{align*}
       & \int_{V^c}\int_{{r/n\leq |z|<r}}[ w(x-q(x,z))m(x,z)-w(x)]\nu (dz)dx \\
       & =\int_{{r/n\leq |z|<r}}\int_{V^c}[ w(x-q(x,z))m(x,z)-w(x)]dx \nu (dz)\\
       & =\int_{{r/n\leq |z|<r}}\left( \int_{\Real^d}[ w(x-q(x,z))m(x,z)-w(x)]dx \right.\\
       &    \qquad\left. -\int_{V}[ w(x-q(x,z))m(x,z)-w(x)]dx \right)\nu (dz)\\
       & \leq \int_{{r/n\leq |z|<r}}\left( \int_{\Real^d}
            [ w(x-q(x,z))m(x,z)-w(x)]dx\right)\nu (dz),
    \end{align*}
    since $w=|u|=0$  on $V$ and $w\geq 0$.
    For the last term, we observe
    \begin{align*}
        & \int_{{r/n\leq |z|<r}}\left( \int_{\Real^d}
            [ w(x-q(x,z))m(x,z)-w(x)]dx\right)\nu (dz)\\
       & = \int_{{r/n\leq |z|<r}}(\norm{u}_1-\norm{u}_1)\nu (dz)=0.
    \end{align*}

    Now it remains to consider the third term in \eqref{eq int r/n<z h}.
    By Fubini's theorem, we have
    \begin{align*}
       & \int_{V^c}\int_{{r/n\leq |z|<r}} \text{div}_x (w (x)p(x,z))\nu (dz)dx\\
      & =\int_{{r/n\leq |z|<r}}\int_{V^c}\text{div}_x (w (x)p(x,z)) dx \nu (dz).
    \end{align*}

    Since $V^c=\set{u\neq 0}$, we now claim that
    \begin{equation}\label{eq int div =0}
      \int_{V^c}\text{div}_x (w (x)p(x,z)) dx=0.
    \end{equation}
    Then $\int_{V^c}\int_{{r/n\leq |z|<r}} \text{div}_x (w (x)p(x,z))\nu (dz)dx
        =\int_{{r/n\leq |z|<r}} 0 \nu (dz)=0$.

    Finally, we are to show \eqref{eq int div =0}.
    Without loss of generality, we assume the set $\set{u\neq 0}$
    has piece-wise $C^1$ boundary, otherwise
    we can approximate $V^c$ by sets $\set{|u|>\eps_n}$,
    $0<\eps_n\to0$, with $C^1$ boundaries. This can be done since by
    Sard's theorem and the implicit function theorem, $\set{|u|>\eps}$ has
    $C^1$ boundary for a.e. $0<\eps<\max |u|$.
    More details can be found in the proof of
    Proposition 3.4 in \cite{CNJLocal}.

    Because $u=0$ on both $\partial\set{u> 0}$ and $\partial\set{u< 0}$, we have
    \begin{align*}
       & \int_{\set{u\neq 0}}\text{div}_x (w (x)p(x,z)) dx \\
       & =\int_{\set{u> 0}}\text{div}_x (u (x)p(x,z)) dx
            -\int_{\set{u< 0}}\text{div}_x (u (x)p(x,z)) dx\\
       & = \int_{\partial\set{u> 0}} u (x)p(x,z) \vec{n}dS
            -\int_{\partial\set{u< 0}} u (x)p(x,z) \vec{n}dS\\
       & =0-0=0,
    \end{align*}
    where $\vec{n}$ denotes the outer unit normal vector.

    Now the proof of Claim \eqref{eq int int h_n <=0} is complete.
\end{proof}

\subsection{Dissipativity of $A_r+I_r$}\label{subsec dspt of Ar+Ir}
\begin{proof}[Proof of Proposition \ref{prop Ar+Ir and adj dspt} \textbf{(a)}]
    The sum of dissipative operators are in general not necessarily
    dissipative operators. However in our case,
    we are able to show that for any $\lambda>0$,
    there holds the dissipativity inequality
    \[
        \norm{(\lambda-(A_r+I_r))u}_1 \geq \lambda \norm{u}_1.
    \]

    Recall the decomposition $I_r=S+T$ in
    Lemma \ref{lem Iu vs Iw}. We rewrite
   \begin{align*}
     & \norm{(\lambda-(A_r+I_r))u}_1
     := \int_{\Real^d} \abs{\lambda u-A_ru-I_ru}\\
      &=\int_{\Real^d} \abs{\lambda u-A_ru-Tu-Su}\\
     &    \geq \int_{ V^c} \abs{\lambda u-A_ru-Tu-Su}\\
     & =\int_{V^+} \abs{\lambda u-A_ru-Tu-Su)}
       +\int_{V^-} \abs{\lambda u-A_ru-Tu-Su}.
   \end{align*}

   Then we use the relationship \eqref{eq Sw =Su+R}
   and yield
   \begin{align*}
      & \int_{V^+} \abs{\lambda u(x)-A_ru(x)-Tu(x)-Su(x)} dx \\
        & \quad +\int_{V^-} \abs{\lambda u(x)-A_ru(x)-Tu(x)-Su(x)} dx \\
      & = \int_{V^+} \abs{\lambda w(x)-A_rw(x)-Tw(x)-(Sw(x)-R(x))} dx  \\
        & \quad +\int_{V^-} \abs{\lambda (-w)(x)-A_r(-w)(x)-T(-w)(x)-(R(x)-Sw(x))} dx \\
      & = \int_{V^+} \abs{\lambda w(x)-A_rw(x)-Tw(x)-Sw(x)+R(x)}  dx \\
        & \quad +\int_{V^-} \abs{\lambda w(x)-A_rw(x)-Tw(x)-Sw(x)+R(x)} dx .
   \end{align*}

    So we got the same integrand on $V^+$ and $V^-$, that is
   \begin{align*}
        & \int_{V^+} \abs{\lambda w(x)-A_rw(x)-Tw(x)-Sw(x)+R(x))}  dx \\
        & \quad +\int_{V^-} \abs{\lambda w(x)-A_rw(x)-Tw(x)-Sw(x)+R(x))}  dx \\
        & = \int_{V^c} \abs{\lambda w(x)-A_rw(x)-Tw(x)-Sw(x)+R(x)} dx \\
      & = \int_{V^c} \abs{\lambda w(x)-A_rw(x)-I_rw(x)+R(x)} dx .
   \end{align*}

   Keep in mind that $R(x)\geq 0$ for all $x\in V^c$,
   and we can estimate that
   \begin{align*}
      & \int_{V^c} \abs{\lambda w(x)-A_rw(x)-I_rw(x)+R(x)} dx \\
      & \geq \int_{V^c} (\lambda w(x)-A_rw(x)-I_rw(x)+R(x)) dx \\
      & \geq \int_{V^c} (\lambda w(x)-A_rw(x)-I_rw(x)) dx \\
      & =\int_{\Real^d} \lambda w(x)-\int_{V^c} (A_rw(x)+I_r w(x)) dx \\
      & \geq \lambda \norm{u}_1,
   \end{align*}
   since $\int_{V^c} A_rw\leq 0$ from the proof of Proposition 3.4 in \cite{CNJLocal},
   and $\int_{V^c} I_r w \leq 0$ by Lemma \ref{lem int Iw leq 0}.
\end{proof}

\subsection{Dissipativity of $A_r^*+I_r^*$}
\label{subsec dspt of Ar*+Ir*}
We specify the domain of the adjoint operator,
\begin{align*}
  &  D(A_r^*+I_r^*):=\\
  & \set{f\in L^{\infty}(\Real^d): \exists\, g \in L^{\infty}(\Real^d)
        \text{ such that } \forall u\in D(L), \int_{\Rd} g u=\int_{\Rd} f (A_r+I_r) u}.
\end{align*}
Then  $(A_r^*+I_r^*) f=g$ in the distributional sense.

\begin{proof}[Proof of Proposition \ref{prop Ar+Ir and adj dspt} \textbf{(b)}]
    Consider an arbitrary function $f\in D(A_r^*+I_r^*)$,
    it then follows from  Condition (E) that
    $f\in W^{2,p}_{loc}(\Rd)$ for some $p>d$.
    Hence we construct $f_n(y):= f(y)-\frac{|y|^2}{n}=:f(y)-g_n(y)$.
    Then by Lemma 5.9 in \cite{CNJLocal},
    there exists a sequence $y_n'\in \Real^d$ such that
    $y_n'$ is the global maximal point of $f_n$ with
    \begin{equation}\label{eq property of yn}
        \lim_n \abs{f(y_n')} =\norm{f}_{\infty},\,
        \lim_n (1+|y_n'|)|D f(y_n')|  = 0, \text{ and }
        \lim_n \frac{|y_n'|^2}{n}=0.
    \end{equation}

    Without loss of generality, we require $f(y_n')\geq 0$.
    Then in view of assumptions (H1) -- (H3) and
    Lemma \ref{lem bound of m(x,z)}, for each $n$ we are able to apply a version of the
    Bony maximum principle -- Proposition 3.1.14
    in \cite{garroni2002second} --  in any bounded neighborhood of $y_n'$
    and obtain
    \begin{equation}\label{eq lim I+A fn}
        \lim_{\rho \to 0} \essinf_{B(\rho ,y_n')} (-A_r^*-I_r^*)f_n(y_n')\geq 0.
    \end{equation}
    (Note that Proposition 3.1.14 holds without uniform ellipticity as can easily
    be seen from its proof given in \cite{GimbertLions1984}).
    We also avoid the points where $D^2 f$ is not defined
    and pick, for each $n$, another point $y_n$
    such that
    \begin{equation}\label{eq compare yn and yn'}
        \abs{y_n-y_n'}+\abs{f(y_n)-f(y_n')}
            +\abs{D f(y_n)}\leq \frac 1n,\quad
            (-A_r^*-I_r^*)f_n(y_n)\geq -\frac 1n.
    \end{equation}
    And we can always take
    \[
        \set{y_n}\subset \set{y\in\Real^d: |\lambda f(y)- A_r^*f(y)-I_r^*f(y)|
            \leq \norm{\lambda f-(A_r^*+I_r^*)f}_{\infty}},
    \]
    because the complement of the latter set has zero Lebesgue measure in $\Real^d$.
    Hence
    \begin{align*}
      &(-A_r^*-I_r^*)f(y_n)  \\
      & = (-A_r^*-I_r^*)f_n(y_n)+(-A_r^*-I_r^*)g_n(y_n)\\
      & \geq  -\frac 1n-\abs{(A_r^*+I_r^* )g_n(y_n)} \\
       & =  -\frac 1n - \frac 1n \abs{2 \langle y_n,b(y_n)\rangle
            +\sigma^T(y_n)\sigma(y_n)+\int_{|z|<r}p^T(y_n,z)p(y_n,z)
                \nu(dz) }.
    \end{align*}
    By \eqref{eq property of yn} and
    \eqref{eq compare yn and yn'}, (H1) -- (H3),
     the right hand side of above tends to zero. Therefore
    $$\liminf_n (-A_r^*-I_r^*)f(y_n)\geq 0.$$

    Finally for all $\lambda>0$, there holds
    \begin{align*}
      \lambda\norm{f}_{\infty} & =\lambda\lim_n f(y_n)
        =\liminf_n  \lambda f(y_n) \\
       & \leq \liminf_n  \lambda f(y_n)
        +\liminf_n(-A_r^*-I_r^*)f(y_n)\\
       & \leq \liminf_n \big( \lambda f(y_n) -(A_r^*+I_r^*)f(y_n) \big)\\
       & \leq \norm{ (\lambda-A_r^*-I_r^*)f}_{\infty}.
    \end{align*}
    The proof is complete.
\end{proof}
\begin{rem}
    \textbf{(a)} Maximum principles like \eqref{eq lim I+A fn}
    first appeared in \cite{Bony1967} for local operators with
    $p>d$ and the critical case $p=d$ was treated in \cite{Lions1983},
    and the first treatment of non-local operators is found in
    \cite{GimbertLions1984} with $p(y,z)=z$, and the proof can
    be easily extended to functions $p(y,z)$ locally bounded in
    $y$, see Section 3.1 in \cite{garroni2002second}.

    \textbf{(b)} The infimum in \eqref{eq lim I+A fn} cannot be replaced by supremum.
    Indeed, let $f(x):=x^2\big(\sin(\ln(x^2))-2\big)\in W^{2,\infty}_{loc}(\R)$,
    with global maxima $f(0)=0$.
    Then $f''(x)=6\cos(\ln(x^2))-2\sin(\ln(x^2))-4$ and
    $\lim_{r\to 0} \essinf_{B(r,0)} f''=-10$ but on the other hand
    $\lim_{r\to 0} \esssup_{B(r,0)} f''=2$.
\end{rem}

\subsection{Dissipativity of $J_r$}\label{subsec dspt of Jr}
Proposition \ref{prop Ir and Jr well def} \textbf{(b)} claims that
$J_r$ is well defined on $L^1(\Rd)$,
but in general it does not possess an explicit
expression since \eqref{eq global est of 1+p'x}
may no longer hold.

In view of Section 1.1.4 in \cite{pazy1992semigroups},
for each $u\in L^1(\Real^d)$, we define its \textit{duality set}
\[
    \mathcal J(u):=\set{f\in L^{\infty}(\Real^d):
        \langle u, f\rangle=\norm{u}_1^2=\norm{f}_{\infty}^2 }.
\]
An equivalent definition for dissipativity is that
for all $u\in L^1(\Real^d)$ there exists $f\in \mathcal J(u)$
such that $ \langle J_r u, f\rangle\leq 0$,
where $\langle \cdot, \cdot\rangle$ stands for duality pairing,
cf. Definition 1.4.1 and Theorem 1.4.2 in \cite{pazy1992semigroups}.

\begin{proof}[Proof of Theorem \ref{prop Ar+Ir and adj dspt} \textbf{(c)}]
    For any $u\in L^1(\Real^d)$, take
    $f_u(x):=\norm{u}_1 \text{sign }u(x)$.

    Obviously $f_u\in\mathcal J(u)$. Moreover
    \begin{align*}
      \langle J_r u, f_u\rangle & =\langle u, J_r^* f_u\rangle
         =\int_{\Rd} u J_r^* f_u\\
       & =\int_{\Rd}  u(x)\norm{u}_1 \int_{|z|\geq r} [\text{sign }u(x+p(x,z))-\text{sign }u(x)]\nu (dz)dx\\
       & =\norm{u}_1\int_{\Rd}  \int_{|z|\geq r}[ u(x)\,\text{sign }u(x+p(x,z))-|u|(x)]\nu (dz)dx\\
       & \leq 0,
    \end{align*}
    since the integrand is always non-positive.
\end{proof}

\section{Elliptic Regularity -- proof of Proposition \ref{prop ell reg 01}}\label{sec ell reg}
The proof of part (a) is similar to the proof of part (b), but easier
since the non-local operator can be treated
as a lower order perturbation. In this case the proof
follows from arguing as in the proof of
Theorem 1.5 in \cite{ZhangBao2013} and applying Proposition 1.1
in \cite{ZhangBao2013} and Lemma 2.3.6 in \cite{garroni2002second}.

By condition (HE) with $s=1$, we know $\int_{|z|<1}|z|\nu(dz)<\infty$. Hence
for $C^1$ functions the following operators are well-defined.
\begin{align*}
  I_r^* \phi(x) & = \tilde I_r^* \phi(x) + D\phi(x)\int_{|z|<r} p(x,z)\nu(dz),\\
  \tilde I_r^* \phi(x)& := \int_{|z|<r}\big( \phi(x+p(x,z))-u(x)\big)\nu(dz),\\
  \tilde I_r \phi(x) & := (\tilde I_r^* )^* \phi(x)
    =\int_{|z|<r}\big( \phi(x-q(x,z))-\phi(x)\big)m(x,z)\nu(dz)\\
        &\qquad\qquad \qquad\qquad   +\phi(x)\int_{|z|<r}\big( m(x,z)-1\big)\nu(dz),
\end{align*}
where $m$ and $q$ are defined in the beginning of Section \ref{sec well-def of L}.

\begin{proof}[Proof of Proposition \ref{prop ell reg 01} (a)]
    Fix $R>0$ and let $B_R:=\set{x\in\Rd: |x|\leq R}$.
    By assumptions and (HE) with $s=1$, equation \eqref{eq distr sol}
    is equivalent to
    \begin{equation}\label{eq div}
      \int_{\Rd}  \sum_{i,j=1}^d f \partial_i \left(\frac 12 a_{ij}\partial_j u\right)
         =\int_{\Rd}  [gu + fH-f \tilde I_r u]
    \end{equation}
    where
    \begin{align*}
      H(x) := & \sum_{i=1}^d \partial_i(b_i(x) u(x))
            -\frac 12 \sum_{i,j=1}^d \partial_i (\partial_j a_{ij}(x) u(x))\\
            &    +\int_{|z|<1}\text{div }\big(u(x)\, p(x,z)\big)\nu (dz).
    \end{align*}
    With $r<1/(4dK)$ as in Section \ref{subsec well def of I_r},
    one can apply Lemmas \ref{lem est of q(x,z)} and \ref{lem bound 1-m(x,z)}
    and verify all assumptions of Lemma 2.3.6 in \cite{garroni2002second}.
    Hence for all $1\leq q \leq \infty$,
    \begin{equation}\label{eq Lq est of Ir}
      \norm{\tilde I_r u}_{L^q(B_{R})}\leq C(R) \norm{u}_{W^{1,q}(B_{R})}.
    \end{equation}

    Next let $q:=\frac{p}{p-1}$ and we argue as in
    the proof of Theorem 1.5 in \cite{ZhangBao2013}
    to show that $f\in \bigcap_{1<p<\infty}W^{1,p}_{loc}(\Rd)$.

    Let $|h|< R/6$, $k=1,\cdots,d$, and $\eta\in C_c^\infty(\Rd)$ be
    such that $\supp \eta\subset B_{2R/3}$ and $\eta\equiv 1$ in $B_{R/2}$.
    Since $f\in L^\infty$, the difference quotient
    \[
        \Delta_h^k f(x):=\frac{f(x+he_k)-f(x)}{h}\in L^p(B_R)
    \]
    and hence
    \[
        \abs{\eta\Delta_h^k f}^{p-1}\sign (\Delta_h^k f)\in L^q(B_R).
    \]
    By the classical results (see Theorem 9.15 in \cite{gilbarg2015elliptic}),
    there is a unique strong solution in $W_0^{1,q}(B_R)\cap W^{2,q}(B_R)$
    for the following Dirichlet problem
    \[
        \frac 12 a_{ij}\partial_i\partial_j v_h
            =\abs{\eta\Delta_h^k f}^{p-1}\sign (\Delta_h^k f)
            \quad \text{ in } B_R,\qquad v_h=0 \quad\text{ in }\partial B_R,
    \]
    such that
    \begin{equation}\label{eq est on vh}
        \norm{v_h}_{W^{2,q}(B_R)}
            \leq C\norm{\abs{\eta\Delta_h^k f}^{p-1}}_{L^q(B_R)}
            \leq C\norm{\eta\Delta_h^k f}^{p-1}_{L^p(B_R)}.
    \end{equation}
    By a density argument, \eqref{eq div} holds for
    all elements $u\in W^{2,q}(\Rd)$ with compact support.
    Hence we choose $u:=\eta \Delta_{-h}^k v_h$ and
    follow the arguments in \cite{ZhangBao2013} for the local terms to get
    \begin{align*}
      \norm{\eta \Delta_h^k f}_{L^p(B_R)}^p
       \leq & \frac 14 \norm{\eta \Delta_h^k f}_{L^p(B_R)}^p +C\\
       & +\norm{f\int_{|z|<1}\text{div}_x (p(\cdot,z)u)\nu(dz)}_{L^1(B_R)}
            +\norm{f\tilde I_r u}_{L^1(B_R)}.
    \end{align*}

    By H\"older's and Young's inequalities, \eqref{eq est on vh},
     (H2), and $\int_{|z|<1}|z|\nu(dz)<\infty$,we have
    \begin{align*}
       &\int_{B_R}\abs{f(x)\int_{|z|<1}\text{div }\big(p(x,z)u(x)\big)\nu (dz)}dx\\
       &  \leq C\int_{|z|<1}|z|\nu(dz) \norm{f}_{L^p(B_R)}\norm{v_h}_{W^{1,q}(B_R)}\\
       & \leq C \norm{f}_{L^p(B_R)}\norm{\eta\Delta_h^k f}^{p-1}_{L^p(B_R)}\\
       &  \leq \frac 14 \norm{\eta\Delta_h^k f}^p_{L^p(B_R)}+C.
    \end{align*}
    Similarly, with the aid of \eqref{eq Lq est of Ir},
    \begin{align*}
       &\int_{B_R}\abs{f(x)\tilde  I_r u(x)} dx
         \leq \norm{f}_{L^p(B_R)}\norm{v_h}_{W^{1,q}(B_R))}\\
       & \leq C \norm{f}_{L^p(B_R)}\norm{\eta\Delta_h^k f}^{p-1}_{L^p(B_R)}
          \leq \frac 14 \norm{\eta\Delta_h^k f}^p_{L^p(B_R)}+C.
    \end{align*}

    Combining the above estimates we get that
    $\norm{\eta \Delta_h^k v_h}_{L^p(B_R)}^p\leq 4C$. Thus
    by definitions of $\eta$ and $v_h$,
    \[
        \norm{\Delta_h^k f}_{L^p(B_{R/2})}\leq 4C,
            \text{ for all }|h|\in (0, 1/6).
    \]
    By the property of difference quotients,  (cf. e.g. Theorem 5.8.3 in
    \cite{evans2010partial}), we have that the weak derivative
    $\partial_k f$ exists in $B_{R/2}$ and
    $\norm{\partial_k f}_{L^p(B_{R/2})}<\infty$.
    Since $R>0$ is arbitrary, $f\in \bigcap_{1<p<\infty}W^{1,p}_{loc}(\Rd)$.

    With this extra regularity we can further rearrange \eqref{eq div} as
    \[
        \int_{\Rd}  f(x)  \sum_{i,j=1}^d \partial_i
            \left(\frac 12 a_{ij}(x)\partial_j u(x)\right)dx
            =\int_{\Rd}  u\,[g + G-\tilde I_r^*f]
    \]
    where
    \[
        G(x) := \sum_{i=1}^d \partial_i f(x)
            \left(\frac 12 \sum_{j=1}^d \partial_j a_{ij} -b_i(x)\right)
           +\int_{|z|<1} Df(x)p(x,z) \nu(dz).
    \]
    By the regularity of $f$, (H2) and (HE) with $s=1$, and
    Lemma 2.3.6 in \cite{garroni2002second}
    again we observe that $g$, $G$, and
    $\tilde I_r^*f \in L^p_{loc}(\Rd)$ for all $1<p<\infty$.
    Hence by Proposition 1.1 in \cite{ZhangBao2013} we have
     $f\in \bigcap_{1<p<\infty}W^{2,p}_{loc}(\Rd)$.
\end{proof}

Now we will prove part (b), and we start with the following estimate.
\begin{lem}\label{lem Lip q}
    Assume (H2) with constant $K>0$. Then
    \[
        \abs{q(x_1,z)-q(x_2,z)}\leq 2K|z||x_1-x_2|,\quad
            \text{for all } x_1,\,x_2\in \Rd \text{ and } |z|<1/2K.
    \]
\end{lem}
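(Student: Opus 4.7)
The plan is to express $q(x,z) = p(y(x,z),z)$ and exploit the Lipschitz estimate $|D_y p(y,z)| \leq K|z|$ from (H2), combined with the implicit definition $y(x,z) = x - q(x,z)$.

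First, I would write
\[
q(x_1,z) - q(x_2,z) = p(y(x_1,z),z) - p(y(x_2,z),z),
\]
and apply the mean value theorem together with the bound $|D_y p(y,z)| \leq K|z|$ from (H2) to obtain
\[
|q(x_1,z) - q(x_2,z)| \leq K|z|\,|y(x_1,z) - y(x_2,z)|.
\]

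Next, I would use the implicit relation $y(x,z) = x - q(x,z)$ to write
\[
y(x_1,z) - y(x_2,z) = (x_1 - x_2) - (q(x_1,z) - q(x_2,z)),
\]
which gives $|y(x_1,z) - y(x_2,z)| \leq |x_1 - x_2| + |q(x_1,z) - q(x_2,z)|$. Plugging this into the previous bound yields
\[
|q(x_1,z) - q(x_2,z)| \leq K|z|\,|x_1 - x_2| + K|z|\,|q(x_1,z) - q(x_2,z)|.
\]

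Finally, for $|z| < 1/(2K)$ we have $K|z| < 1/2$, so I can absorb the last term on the left-hand side:
\[
(1 - K|z|)\,|q(x_1,z) - q(x_2,z)| \leq K|z|\,|x_1 - x_2|,
\]
and hence $|q(x_1,z) - q(x_2,z)| \leq \tfrac{K|z|}{1-K|z|}|x_1-x_2| \leq 2K|z|\,|x_1 - x_2|$, as claimed. There is no real obstacle here; the only subtle point is justifying that $y(x,z)$ is well-defined as a function of $x$ for $|z| < 1/(2K)$, which follows from the same contraction-mapping idea (or from Lemma~\ref{lem bound of m(x,z)} and the implicit function theorem on the relevant range of $z$).
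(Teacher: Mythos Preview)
Your proof is correct and follows essentially the same route as the paper's own argument: write $q(x,z)=p(y(x,z),z)$, apply the Lipschitz bound $|D_y p|\leq K|z|$ from (H2), substitute $y(x,z)=x-q(x,z)$, and absorb the resulting term using $K|z|<\tfrac12$. The paper simply writes these steps more tersely without separating out the intermediate bound on $|y(x_1,z)-y(x_2,z)|$.
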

\begin{proof}
    By definition
    \begin{align*}
      & \abs{q(x_1,z)-q(x_2,z)}  =\abs{p(y(x_1,z),z)-p(y(x_2,z),z)} \\
       & = \abs{p(x_1-q(x_1,z),z)-p(x_2-q(x_2,z),z)}\\
       & \leq K |z|\abs{x_1-x_2+ q(x_2,z)-q(x_1,z)}\\
       & \leq K |z|\abs{x_1-x_2}+\frac 12 \abs{ q(x_2,z)-q(x_1,z)}.
    \end{align*}
    Hence $\abs{q(x_1,z)-q(x_2,z)}\leq 2K|z||x_1-x_2|$.
\end{proof}

\begin{proof}[Proof of Proposition \ref{prop ell reg 01} (b)]
    The first two steps below mainly follow the same ideas of proving
    Theorem 1.5 in \cite{ZhangBao2013} and Theorem 1.3 in \cite{Zhang20121867}
    respectively.

    \emph{Step I. } $W^{1,p}_{loc}$ regularity for $p\in (1, \infty)$.

    Let $R>1$ and $B_R(0)$ be the ball of radius $R$ centered at the origin.
    For any $x_0\in B_R(0)$, denote $B_r$ the ball of radius $r>0$ centered at $x_0$.

    According to (H2) and Lemma \ref{lem est of q(x,z)},
    for this $R>0$ there exists $r_R>0$ such that
    \[
        \sup_{|x|<R,\,|z|<r_R} |p(x,z)|\leq 1/6\quad\text{ and }\quad
        \sup_{|x|<R,\,|z|<r_R} |q(x,z)|\leq 1/6.
    \]
    This is  the principal part of the operator $I_r$.
    The outer part of $I_r$,
    \[
        \int_{r_R\leq |z|<r} [f(x+p(x,z))-f(x)-p(x,z)D f(x)]\nu(dz),
    \]
    is a bounded operator in $L^p(\Rd)$ (cf. Lemma 2.3.6 in \cite{garroni2002second})
    plus a first-order differential operator.
    So they can be absorbed into other lower order terms.
    Hence without loss of generality, one can suppose $r_R=r$.

    Let $p>d$ be so large that the conjugate index $q:=\frac{p}{p-1}<\frac{d}{s-1}$.
    Let $\eta\in C_c^\infty(\Rd)$ be a
    nonnegative truncation function
    such that $\supp \eta\subset B_{2/3}$ and $\eta\equiv 1$ in $B_{1/2}$.
    Then for $|h|\in (0,1/6)$ and $k=1,\dots,d$,  difference quotient
    \[
        \Delta_h^k f(x):=\frac{f(x+he_k)-f(x)}{h}\in L^p(B_{1}),
    \]
    and hence
    \[
        \abs{\eta\Delta_h^k f}^{p-1}\sign (\Delta_h^k f)\in L^q(B_1).
    \]

    Since $\int_{|z|<1}|z|^s \nu(dz)<\infty$,
    we can take $\gamma_1=s$ to be the \emph{boundary order} of the operator $I_r^*$
    defined in Theorem 3.1.22 in \cite{garroni2002second}.
    By this theorem there exists a
    unique strong solution of the Dirichlet problem
    $v_h\in W_0^{1,q}(B_1)\cap W^{2,q}(B_1)$:
    \begin{equation}\label{eq Dirichlet vh}
        (A_r+I_r) v_h
         =\abs{\eta\Delta_h^k f}^{p-1}\sign (\Delta_h^k f)\qquad\text{ in }B_1,
            \qquad v_h=0 \qquad\text{ in }B_1^c,
    \end{equation}
    satisfying the estimate
    \begin{equation}\label{eq est of vh}
        \norm{v_h}_{W^{2,q}(B_1)}\leq C\norm{\abs{\eta\Delta_h^k f}^{p-1}}_{L^q(B_1)}
            = C\norm{\eta\Delta_h^k f}_{L^p (B_1)}^{p-1}.
    \end{equation}
    Next we denote
    \[
      I_r (\eta \Delta_{-h}^k v_h)(x)
       = \Delta_{-h}^k \big( \eta(x) I_r v_h(x)\big)
       +R(x,h).
    \]
    The commutator $R(x,h)$ will be explicitly computed in
    \emph{Step III} where we will also establish the following
    estimate
    \begin{equation}\label{eq est of R}
      \norm{R(\cdot,h)}_{L^q (B_1)}\leq C\norm{v_h}_{W^{2,q}(B_1)},
    \end{equation}
    where $C>0$ is a constant independent of $h$.

    We now continue as in the proof of
    Theorem 1.5 in \cite{ZhangBao2013}. First, observe that
    \begin{equation}\label{eq commutators}
        (A_r+I_r)(\eta \Delta_{-h}^k v_h)
        = \Delta_{-h}^k \big( \eta (A_r+I_r)v_h \big)
            + \tilde R(x,h),
    \end{equation}
    where the commutator $\tilde R(x,h)$,
    consisting of $R(x,h)$ and commutator of the local terms,
    belongs to $L^q_{loc}(\Rd)$ satisfying (see \cite{ZhangBao2013})
    \[
        \|\tilde R(x,h)\|_{L^q(B_1)}
        \leq C\norm{\eta\Delta_h^k f}_{L^p (B_1)}^{p-1}.
    \]
    Since $f\in L^p_{loc}$, it follows from
    a density arguments that equation \eqref{eq distr sol} holds
    for all functions in $W^{2,q}(\Rd)$ with compact support.
    Hence we take $u:=\eta v_h$ in \eqref{eq distr sol}, use \eqref{eq commutators}
    and \eqref{eq Dirichlet vh} consecutively to obtain
    \begin{align*}
       & \int_{B_1} g\eta \Delta_{-h}^k v_h
        = \int_{B_1} f (A_r+I_r)(\eta \Delta_{-h}^k v_h)\\
       & = \int_{B_1} \big[(\Delta_h^k f)  \eta (A_r+I_r)v_h  + f\tilde R\big]
        = \int_{B_1} \big[ \abs{\eta \Delta_h^k f}^p +f\tilde R\big].
    \end{align*}
    By H\"older's inequality, \eqref{eq est of R}, \eqref{eq est of vh},
    and Young's inequality,
    we have further that
    \begin{align*}
      \norm{\eta \Delta_h^k f}_{L^p (B_1)}^p
        & \leq \int_{B_1} \big(|f\tilde R|+|g\eta\Delta_{-h}^k v_h|\big) \\
       & \leq \norm{f}_{L^p(B_1)}\|\tilde R\|_{L^q(B_1)}
        +\norm{g}_{L^p(B_1)}\norm{\eta\Delta_{-h}^k v_h}_{L^q(B_1)}\\
       & \leq C(\norm{f}_{L^p(B_1)}+\norm{g}_{L^p(B_1)})\norm{v_h}_{W^{2,q}(B_1)}\\
       & \leq C'(\norm{f}_{L^p(B_1)}+\norm{g}_{L^p(B_1)})\norm{\eta\Delta_h^k f}_{L^p (B_1)}^{p-1}\\
       & \leq \frac 12 \norm{\eta \Delta_h^k f}_{L^p (B_1)}^p + C''.
    \end{align*}
    Therefore $\norm{\eta \Delta_h^k f}_{L^p (B_1)}^p\leq 2C''$,
    uniformly for $h\to 0$.
    By the property of difference quotient (cf. e.g. Theorem 5.8.3 in
    \cite{evans2010partial}),
    $\partial_k f$ exists and belongs to $L^p (B_{1/2})$.
    Since $k$ and $x_0\in B_R(0)$ were arbitrarily chosen,
    we can use the finite covering theorem
    to conclude that $f\in W^{1,p}(B_R(0))$.

    \emph{Step II. } $W^{2,p}_{loc}$ regularity for $p\in \big(1, \frac{d}{s-1}\big)$.

    By the previous step, $f$ is now a weak solution of equation \eqref{eq distr sol}.
    For a rigorous interpretation of the non-local operator in the weak sense,
    one can refer to Section 3.2 in \cite{garroni2002second}.
    Here we formally rewrite the equation as
    \begin{equation}\label{eq global in div form}
      \frac12 \sum_{i,j=1}^d\partial_i(a_{ij}\partial_j f) + f + I_r^* f =g+R',
    \end{equation}
    where $R'$ is a local term belonging to $L^p_{loc}(\Rd)$
    for all $p\in (1, \infty)$.
    Especially, multiplying
    on both sides of \eqref{eq global in div form}
    the truncation function $\eta$ defined in the previous step, we observe
    $v:=\eta f \in W^{1,p}_0(B_1)$ is a weak solution of the following equation
    \begin{equation}\label{eq in div form}
        \frac12 \sum_{i,j=1}^d\partial_i(a_{ij}\partial_j v) + v + I_r^* v =\eta g+ R'',
    \end{equation}
    where $R''$ consists of local terms,  local commutators which are
    in $L^p(B_1)$ for all $p\in (1, \infty)$,
    and a non-local commutator
    \begin{align*}
       & \int_{|z|<r}\Big[ \Big( \eta(x+p(x,z))-\eta(x)-p(x,z)D\eta(x)\Big)
            f(x+p(x,z))\\
       & \quad +p(x,z)D\eta(x)\Big( f(x+p(x,z))-f(x) \Big) \Big]\nu(dz)
    \end{align*}
    which also belongs to $L^p(B_1)$ for all $p\in (1, \infty)$.
    This commutator is well-defined for $f\in W^{1,p}_{loc}$
    and it is derived by considering a truncation of the L\'evy measure
    by $0<\eps\leq |z|<r$ and passing $\eps$ to $0+$,
    where the resulting truncated operators are well-defined
    for functions in $W^{1,p}(\Rd)$ with compact support.

    By Theorem 3.2.3 in \cite{garroni2002second},
    the \emph{weak maximum principle} holds
    for the operator
    $\frac12 \sum_{i,j=1}^d\partial_i(a_{ij}\partial_j \cdot) + id + I_r^*$
    in  \eqref{eq in div form}.
    Moreover $w\equiv 1$ is a \emph{weak subsolution}
    (cf. (3.2.24) in Section 3.2 of \cite{garroni2002second})
    for the same operator.
    Hence Theorem 3.2.5 in \cite{garroni2002second} can be applied to
    \eqref{eq in div form} and it guarantees the equation \eqref{eq in div form}
    has a unique weak solution
    in $W^{1,p}_0(B_{1})$ for $p\in (1, d/(s-1))$.
    Moreover, according to Theorem 3.1.22 in \cite{garroni2002second},
    equation  \eqref{eq in div form} also admits a strong solution in
    $W_0^{1,p}(B_1)\cap W^{2,p}(B_1)$ for $p\in (1, d/(s-1))$.
    Therefore $v$ is a strong solution and $f\in W^{2,p}(B_{1/2})$.
    With the finite covering theorem, we can show
    $f\in W^{2,p}(B_R(0))$.

    \emph{Step III}. Finally, we prove \eqref{eq est of R}.
    For simplicity, we will write $he_k=h$ when there is no ambiguity.
    After a long but elementary computation we write
    $R(x,h)=\frac{1}{-h}\int_{|z|<r}\sum_{i=1}^8 R_i (x,z,h)\nu(dz)$ with
    \begin{align*}
      & \sum_{i=1}^8 R_i (x,z,h)\\
        &:= \big(\eta(x)-\eta(x-h)\big)\\
            & \quad \cdot\Big[ v_h(x-h-q(x,z))m(x,z)-v_h(x-h)
        + D_x[p(x,z)v_h(x-h)] \Big]\\[0.2cm]
        & +\Big[ \eta(x-q(x,z))m(x,z)-\eta(x)+  D_x[p(x,z)\eta(x)]\Big]\\
            & \quad\cdot\big(v_h(x-q(x,z)-h)-v_h(x-q(x,z))\big)\\[0.2cm]
        & +p(x)D\eta(x)
            \Big[ v_h(x-h)-v_h(x)-v_h(x-q(x,z)-h)+v_h(x-q(x,z)) \Big]\\[0.2cm]
        &   +\eta(x)\Big[ v_h(x-q(x,z))-v_h(x-q(x,z)-h) \Big]
            (m(x,z)+D_x\cdot p(x,z)-1 )\\[0.2cm]
        &   +\eta(x-h)\Big[ v_h(x-q(x-h,z)-h)-v_h(x-h) +q(x-h,z)
            D v_h(x-h)\Big]\\
            & \quad\cdot\big(m(x,z)-m(x-h,z)\big)\\[0.2cm]
        &   +\eta(x-h)D v_h(x) \\
            & \quad\cdot
            \Big[ p(x,z)-q(x,z)m(x,z)-(p(x-h,z)-q(x-h,z)m(x-h,z)) \Big]\\[0.2cm]
        &   +\eta(x-h) v_h(x)\Big[ m(x,z)+D_x\cdot p(x,z)-1 -
            (m(x-h,z)+D_x\cdot p(x-h,z)-1)\Big]\\[0.2cm]
        &   + \eta(x-h)\Big[ v_h(x-q(x,z)-h)-v_h(x-q(x-h,z)-h)\\
            & \quad +q(x,z)D v_h(x-h) - q(x-h,z)D v_h(x-h) \Big] m(x,z).
    \end{align*}
    Recall that $\supp \eta\subset B_{2/3}$,
    $|h|\in (0,1/6)$, $\sup_{|x|<R,\,|z|<r} |p(x,z)|\leq 1/6$
    and $\sup_{|x|<R,\,|z|<r} |q(x,z)|\leq 1/6$.
    In particular $x+p(x,z)-h\in B_1$ for $x\in B_{2/3}$.

    We will show that for $i=1,\cdots,8$,
    \begin{equation}\label{eq est of Ri}
      \norm{\frac{1}{-h}\int_{|z|<r}R_i(\cdot,z,h)\nu(dz)}_{L^q(B_1)}
            \leq C \norm{v_h}_{W^{2,q}(B_1)}.
    \end{equation}

    For $R_1$, $\abs{\eta(x)-\eta(x-h)}\leq C|h|$ and observe
    \begin{align*}
       & v_h(x-h-q(x,z))m(x,z)-v_h(x-h)  + D_x[p(x,z)v_h(x-h)] \\
       & = \Big(v_h(x-h-q(x,z)) -v_h(x-h) +q(x,z)D v_h(x-h) \Big)m(x,z)\\
       & \quad +D v_h(x-h) \Big( p(x,z) -q(x,z)m(x,z) \Big)\\
       & \quad +v_h(x-h) \Big( m(x,z) + D_x p(x,z)-1 \Big).
    \end{align*}
    The first term equals
    \[
        \int_0^1 (1-\theta)q^T(x,z)(D^2 v_h) (x-h-\theta q(x,z))q(x,z) d\theta \,m(x,z).
    \]
    Then the rest of arguments for $R_1$ follows from
    \eqref{eq est of vh} and the proof of
    Proposition \ref{prop Ir and Jr well def} \textbf{(a)}.

    The analysis of $R_2$ is the same as for $R_1$,
    with the roles of $\eta$ and $v_h$ exchanged.

    For $R_3$, estimate \eqref{eq est of Ri} follows from the observation
    \[
        R_3(x,z,h)=h\big(p(x,z)D \eta(x)\big)^T
            \int_0^1\int_0^1  (D^2 v_h)(x-\theta q(x,z)-\xi h) q(x,z)d\xi d\theta.
    \]

    When $i=4$, we obtain \eqref{eq est of Ri}  from the proof of
    Proposition \ref{prop Ir and Jr well def} \textbf{(a)} and the  fact
    \[
        v_h(x-q(x,z))-v_h(x-q(x,z)-h)
            =\int_0^1 he_k (D v_h)(x-q(x,z)-\theta h)d\theta.
    \]

    For $R_5$, it suffices to show
    $\abs{m(x,z)-m(x-h,z)}\leq C_R|z||h|$ for all $|x|\leq R$.
    Indeed, with the same notations as in Section \ref{sec well-def of L},
    denote $M(x,z):= (D_y p)(y(x,z),z)$. Then
    \begin{align*}
      & m(x,z)-m(x-h,z) \\
      & =\frac{1}{1+\tr M(x,z) + P(x,z)}
        -\frac{1}{1+\tr M(x-h,z) + P(x-h,z)} \\
       & = \frac{\tr M(x-h,z)-\tr M(x,z)+P(x-h,z)-P(x,z)}{(1+\tr M(x,z) + P(x,z))(1+\tr M(x-h,z) + P(x-h,z))}
    \end{align*}
    By the global Lipschitz condition on $p(x,z)$ for $x$, the denominator of the last fraction
    is bounded away from $0$ uniformly for all $x\in \Rd$ and all $|z|<r$.
    Moreover by (H2')
    \begin{align*}
       & \abs{\tr M(x-h,z)-\tr M(x,z)} \\
       & \leq \sum_{k=1}^d \abs{(\partial_{y_k} p)(y(x-h,z),z)
                -(\partial_{y_k} p)(y(x,z),z)}\\
        & =\sum_{k=1}^d \abs{(\partial_{y_k} p)(x-h-q(x-h,z),z)
                -(\partial_{y_k} p)(x-q(x,z),z)}\\
         &   \leq C|z||h|.
    \end{align*}
    Similarly, we have $\abs{P(x-h,z)-P(x,z)}\leq C|z|^2 |h|$,
    since $P(x,z)$ are sum of products of at least two $x$-derivatives
    of $p(x,z)$ by the definition of the Jacobian.

    Now we analyze $R_6$. First we compute
    \begin{align*}
      & p(x,z)-q(x,z)m(x,z)-(p(x-h,z)-q(x-h,z)m(x-h,z))\\
      & = p(x,z)-q(x,z)+q(x,z)(1-m(x,z))\\
        &\quad -p(x-h,z)-q(x-h,z)+q(x-h,z)(1-m(x-h,z)) \\
      & = \int_0^1\int_0^1 D_x^2 p(x-\theta q(x,z)+\xi (h+\theta(q(x,z)-q(x-h,z)) ))\\
          &\qquad  \cdot(h+\theta(q(x,z)-q(x-h,z)))q(x,z)d\xi d\theta\\
      &\quad +\int_0^1 (q(x,z)-q(x-h,z)) D_x p(x-h+\theta q(x-h,z)) d\theta\\
      & \quad + (q(x,z)-q(x-h,z))(1-m(x,z))\\
      & \quad + q(x-h,z)(m(x,z)-m(x-h,z))
    \end{align*}
    Then according to  $\abs{D_x^2 p(x,z)}\leq C_R|z|$ from (H2'),
    Lemmas \ref{lem Lip q}, \ref{lem est of q(x,z)} and \ref{lem bound 1-m(x,z)},
    we know the first three terms are bounded by $C_R |z|^2 |h|$ for all $|x|\leq R$.

    In view of Lemma \eqref{lem est of q(x,z)},
    for the last term, it follows from
    $\abs{m(x,z)-m(x-h,z)}\leq C_R|z||h|$ for all $|x|\leq R$,
    given in the analysis for $R_5$.

    Now we turn to $R_7$. Note that
    \begin{align*}
       & m(x,z)+D_x\cdot p(x,z)-1 -
            [m(x-h,z)+D_x\cdot p(x-h,z)-1] \\
       & = m(x,z)+ (\text{div}_y p)(y(x,z),z)-1 -[m(x-h,z)+ (\text{div}_y p)(y(x-h,z),z)-1]\\
         & \quad   +[\text{div}_xp(x,z)- (\text{div}_y p)(y(x,z),z)]
            - [\text{div}_x p(x-h,z)- (\text{div}_y p)(y(x-h,z),z)]\\
       & =: R_7'+ R_7''
    \end{align*}
    Similar to $R_6$, with the estimate $\abs{D_x^2 p(x,z)}\leq C_R |z|$ from (H2'),
    \begin{align*}
      R_7'  = \frac{-P+P\tr M+(\tr M)^2}{1+\tr M+ P}(x,z)
            - \frac{-P+P\tr M+(\tr M)^2}{1+\tr M+ P}(x-h,z).
    \end{align*}
    can be estimated in the same manner and
    it is bounded by $C_R|z|^2|h|$.
    To illustrate, we treat one typical term in the above difference.
    \begin{align*}
       & (\partial_i p)(y(x-h,z),z)(\partial_j p)(y(x-h,z),z)
        -(\partial_i p)(y(x,z),z)(\partial_j p)(y(x,z),z)\\
       & = (\partial_i p)(y(x-h,z),z)\Big((\partial_j p)(y(x-h,z),z)-(\partial_j p)(y(x,z),z)\Big)\\
       & \quad +(\partial_j p)(y(x,z),z)\Big((\partial_i p)(y(x-h,z),z)-(\partial_i p)(y(x,z),z)\Big).
    \end{align*}
    By (H2') and Lemma \ref{lem bound of y(x,z)},
    \[
       \abs{(\partial_i p)(y(x,z),z)}\leq C(1+|y(x,z)|)|z|
        \leq 2C(1+|x|)|z|\leq C_R |z|.
    \]
    We also observe that
    \begin{align*}
       & (\partial_j p)(y(x-h,z),z)-(\partial_j p)(y(x,z),z) \\
       & =\int_0^1 (he_k-q(x-h,z)+q(x,z))\\
       & \qquad \cdot(D_x \partial_j p)
        \big(x-q(x,z)-\theta(he_k-q(x-h,z)+q(x,z)),z\big)d\theta.
    \end{align*}
    Then the estimate \eqref{eq est of Ri} will follow from
    (H2') and Lemma \ref{lem Lip q}.

    Next, we compute that
    \begin{align*}
      R_7'' & =\int_0^1 (q(x-h,z)-q(x,z))(D_x (\text{div}_x p))(x-\theta q(x,z),z)d\theta \\
       & \quad + \int_0^1\int_0^1 q(x-h,z)(h+\theta (q(x-h,z)-q(x,z)))\\
       & \qquad    \cdot\big( D_x^2(\text{div}_x p)\big)\big(x-h-\theta q(x-h,z)
            +\xi (h+\theta (q(x-h,z)-q(x,z))),z\big) d\xi.
    \end{align*}
    By (H2'),  we obtain
    $\abs{R_7''(x,z,h)}\leq C|z|^2 |h|$ for all $|z|<r$.

    For $R_8$, observe that
    \begin{align*}
      R_8= & \eta(x-h)\int_0^1\int_0^1 \theta|q(x,z)-q(x-h,z)|^2 \\
       &  \quad \cdot(D^2 v_h)\big(x-\xi\theta (q(x,z)-q(x-h,z)-h) \big)
            d\xi d\theta \,m(x,z).
    \end{align*}
    Therefore by Lemma \ref{lem Lip q},
    $\norm{\frac{1}{-h}\int_{|z|<r}R_8(\cdot,z,h)\nu(dz)}_{L^q(B_1)}
    \leq C\norm{\eta\Delta_h^k f}_{L^p (B_1)}^{p-1}$ too.
    The proof is complete.
\end{proof}


\begin{filecontents}{Levy-PI.bib}

@article {Butko2016,
    AUTHOR = {Butko, Y. A. and Grothaus, M. and Smolyanov, O. G.},
     TITLE = {Feynman formulae and phase space {F}eynman path integrals for
              tau-quantization of some {L}\'evy-{K}hintchine type {H}amilton
              functions},
   JOURNAL = {J. Math. Phys.},
  FJOURNAL = {Journal of Mathematical Physics},
    VOLUME = {57},
      YEAR = {2016},
    NUMBER = {2},
     PAGES = {023508, 22},
      ISSN = {0022-2488},
   MRCLASS = {81S40},
       DOI = {10.1063/1.4940697},
       URL = {http://dx.doi.org/10.1063/1.4940697},
}

@article{GobetLa2008,
    title = {{Sharp estimates for the convergence of the density of the Euler scheme in small time}},
    journal = "Electronic Communications in Probability",
    volume = "13",
    pages = "352-363",
    year = "2008",
    author = "Gobet, E. and Labart, C.",
}

@article{MenozziLemaire2010,
	author = {Menozzi, S. and  Lemaire, V.},
	title = {{On Some non Asymptotic Bounds for the Euler Scheme}},
	journal = {Electron. J. Probab.},
	fjournal = {Electronic Journal of Probability},
	volume = {15},
	year = {2010},
	pages = {no. 53, 1645-1681},
	issn = {1083-6489},
	doi = {10.1214/EJP.v15-814},
    url = {http://ejp.ejpecp.org/article/view/814}
}

@article{Barndorff-Nielsen1997,
    author = {Barndorff-Nielsen, O. E.},
    doi = {10.1111/1467-9469.00045},
    file = {:D$\backslash$:/Papers/Jump Processes/Normal Inverse Gaussian Distributions and Stochastic Volatility Modelling.pdf:pdf},
    issn = {1467-9469},
    journal = {Scand. J. Stat.},
    number = {1},
    pages = {1-13},
    title = {{Normal Inverse Gaussian Distributions and Stochastic Volatility Modelling}},
    url = {http://dx.doi.org/10.1111/1467-9469.00045},
    volume = {24},
    year = {1997}
}

@book{schoutens2003levy,
  title={{L\'evy Processes in Finance: Pricing Financial Derivatives}},
  author={Schoutens, W.},
  isbn={9780470851562},
  lccn={2003043297},
  series={Wiley Series in Probability and Statistics},
  url={https://books.google.no/books?id=Be4NqIz7h-kC},
  year={2003},
  publisher={Wiley}
}

@book{kloeden2013numerical,
  title={{Numerical Solution of Stochastic Differential Equations}},
  author={Kloeden, P. E. and Platen, E.},
  isbn={9783662126165},
  series={Stochastic Modelling and Applied Probability},
  url={https://books.google.no/books?id=r9r6CAAAQBAJ},
  year={1999},
  publisher={Springer Berlin Heidelberg}
}

@book{lasota2013chaos,
  title={Chaos, Fractals, and Noise: Stochastic Aspects of Dynamics},
  author={Lasota, A. and Mackey, M. C.},
  isbn={9781461242864},
  series={Applied Mathematical Sciences},
  url={https://books.google.no/books?id=5gHaBwAAQBAJ},
  year={1994},
  publisher={Springer New York}
}

@book{oksendal2010stochastic,
  title={Stochastic Differential Equations: An Introduction with Applications},
  author={{\O}ksendal, B.},
  isbn={9783642143946},
  lccn={2003052637},
  series={Universitext},
  url={https://books.google.no/books?id=EQZEAAAAQBAJ},
  year={2010},
  publisher={Springer Berlin Heidelberg}
}

@book{platen2010numerical,
  title={Numerical Solution of Stochastic Differential Equations with Jumps in Finance},
  author={Platen, E. and Bruti-Liberati, N.},
  isbn={9783642136948},
  series={Stochastic Modelling and Applied Probability},
  url={https://books.google.no/books?id=XjNLshgxpxUC},
  year={2010},
  publisher={Springer}
}

@book{engel2006one,
  title={One-Parameter Semigroups for Linear Evolution Equations},
  author={Engel, K. J. and Nagel, R. and others},
  isbn={9780387226422},
  lccn={99015366},
  series={Graduate Texts in Mathematics},
  url={https://books.google.no/books?id=q0YMCAAAQBAJ},
  year={2006},
  publisher={Springer New York}
}

@book{ikeda2014stochastic,
  title={Stochastic Differential Equations and Diffusion Processes},
  author={Ikeda, N. and Watanabe, S.},
  isbn={9781483296159},
  series={North-Holland Mathematical Library},
  url={https://books.google.no/books?id=QZbOBQAAQBAJ},
  year={2014},
  publisher={Elsevier Science}
}

@misc{ISEM_16,
  author        = {Hundertmark, D. and  Meyries, M. and Machinek, L. and Schnaubelt, R.},
  title         = {Operator Semigroups and Dispersive Equations},
  year          = {2013},
  note          = {ISEM 16 Lecture Notes, Karlsruhe, Halle},
  url = {https://isem.math.kit.edu/images/b/b3/Isem16_final.pdf}
}

@book{applebaum2004levy,
  title={L{\'e}vy Processes and Stochastic Calculus},
  author={Applebaum, D.},
  isbn={9780521832632},
  lccn={2003063882},
  series={Cambridge Studies in Advanced Mathematics},
  url={https://books.google.no/books?id=q7eDUjdJxIkC},
  year={2004},
  publisher={Cambridge University Press}
}

@book{garroni2002second,
  title={Second Order Elliptic Integro-Differential Problems},
  author={Garroni, M. G. and Menaldi, J. L.},
  isbn={9781420035797},
  series={Chapman \& Hall/CRC Research Notes in Mathematics Series},
  url={https://books.google.no/books?id=paxKwQJJgQ0C},
  year={2002},
  publisher={CRC Press}
}

@article{CNJLocal,
  author={Chen, L. and Jakobsen, E. R. and Naess, A.},
  title={On numerical density approximations of solutions of {SDE}s with unbounded coefficients},
  journal={Adv. Comput. Math. Online First},
  pages={1-29},
  year={2017}
}

@article{Jakobsen2002,
    author = {Jakobsen, E. R. and Karlsen, K. H.},
    doi = {10.1006/jdeq.2001.4136},
    issn = {10726691},
    journal = {Electron. J. Differ. Equations},
    number = {39},
    pages = {1-10},
    title = {{Continuous dependence estimates for viscosity solutions of
        fully nonlinear degenerate elliptic equations}},
    volume = {2002},
    year = {2002}
}

@book{gilbarg2015elliptic,
  title={Elliptic Partial Differential Equations of Second Order},
  author={Gilbarg, D. and Trudinger, N.S.},
  isbn={9783642617980},
  series={Classics in Mathematics},
  url={https://books.google.no/books?id=l9L6CAAAQBAJ},
  year={2015},
  publisher={Springer Berlin Heidelberg}
}

@article{price1951,
     jstor_articletype = {research-article},
     title = {{Bounds for Determinants with Dominant Principal Diagonal}},
     author = {Price, G. B.},
     journal = {Proceedings of the American Mathematical Society},
     volume = {2},
     number = {3},
     pages = {497-502},
     url = {http://www.jstor.org/stable/2031782},
     ISSN = {00029939},
     year = {1951},
     publisher = {American Mathematical Society}
    }

@article{Brooks2006,
    author = {Brooks, B. P.},
    doi = {10.1016/j.aml.2005.07.007},
    issn = {08939659},
    journal = {Appl. Math. Lett.},
    keywords = {Derivative of a determinant,Eigenvalue,Jacobian},
    number = {6},
    pages = {511-515},
    title = {{The coefficients of the characteristic polynomial
            in terms of the eigenvalues and the elements of an $n\times n$ matrix}},
    volume = {19},
    year = {2006}
}

@book{evans2010partial,
  title={Partial Differential Equations},
  author={Evans, L. C.},
  isbn={9780821849743},
  lccn={2009044716},
  series={Graduate studies in mathematics},
  url={https://books.google.no/books?id=Xnu0o\_EJrCQC},
  year={2010},
  publisher={American Mathematical Society}
}

@book{schwartz1958linear,
  title={{Linear Operators, Part I: General Theory}},
  author={Dunford, N. and Schwartz, J.T. and Bade, W.G. and Bartle, R.G.},
  series={Pure and Applied Mathematics. vol. 7, etc},
  url={https://books.google.no/books?id=4gkVMwEACAAJ},
  year={1958},
  publisher={New York; Groningen printed}
}

@book{wheeden1977measure,
  title={Measure and Integral: An Introduction to Real Analysis},
  author={Wheeden, R. L. and Zygmund, A.},
  isbn={9780824764999},
  lccn={77014167},
  series={Chapman \& Hall/CRC Pure and Applied Mathematics},
  url={https://books.google.no/books?id=YDkDmQ\_hdmcC},
  year={1977},
  publisher={Taylor \& Francis}
}

@book{Sato1999levy,
  title={L{\'e}vy Processes and Infinitely Divisible Distributions},
  author={Sato, K.-I.},
  isbn={9780521553025},
  lccn={99015232},
  series={Cambridge Studies in Advanced Mathematics},
  url={https://books.google.no/books?id=CwT5BNG0-owC},
  year={1999},
  publisher={Cambridge University Press}
}

@book{pazy1992semigroups,
  title={Semigroups of Linear Operators and Applications to Partial Differential Equations},
  author={Pazy, A.},
  isbn={9780387908458},
  lccn={lc83010637},
  series={Applied Mathematical Sciences},
  url={https://books.google.no/books?id=sIAyOgM4R3kC},
  year={1992},
  publisher={Springer New York}
}

@article{Protter1997,
    author = {Protter, P. and Talay, D.},
    file = {:C$\backslash$:/Users/linghuac/Desktop/Lit Review of Non Local/The Euler Scheme for L\'evy Driven Stochastic Differential Equations.pdf:pdf},
    number = {1},
    pages = {393-423},
    title = {{The Euler Scheme for L\'evy Driven Stochastic Differential Equations}},
    journal = {Ann. Probab.},
    volume = {25},
    year = {1997}
}

@incollection{Kohatsu-higa1994,
  author      = {Kohatsu-Higa, A. and Protter, P.},
  title       = {{The Euler scheme for SDE's driven by semimartingales}},
  editor      = {Kunita, H. and Kuo, H.H.},
  booktitle   = {Stochastic Analysis on Infinite Dimensional Spaces},
  publisher   = {Taylor \& Francis},
  series        ={Chapman \& Hall/CRC Research Notes in Mathematics Series},
  address     = {New York},
  year        = 1994,
}

@article{jacod1998,
    author = "Jacod, J. and Protter, P.",
    doi = "10.1214/aop/1022855419",
    fjournal = "The Annals of Probability",
    journal = "Ann. Probab.",
    month = "01",
    number = "1",
    pages = "267-307",
    publisher = "The Institute of Mathematical Statistics",
    title = {{Asymptotic error distributions for the Euler method for
     stochastic differential equations}},
    url = "http://dx.doi.org/10.1214/aop/1022855419",
    volume = "26",
    year = "1998"
}

@article{Kolokoltsov2000,
    author = {Kolokoltsov, V.},
    doi = {10.1112/S0024611500012314},
    file = {:C$\backslash$:/Users/linghuac/Desktop/Lit Review of Non Local/SYMMETRIC STABLE LAWS AND STABLE-LIKE JUMP-DIFFUSIONS.pdf:pdf},
    issn = {0024-6115},
    journal = {Proc. London Math. Soc.},
    number = {03},
    title = {{Symmetric stable laws and stable-like jump-diffusions}},
    volume = {80},
    year = {2000}
}

@article{Jacod2003,
    author = {Jacod, J. and Jakubowski, a. and M\'{e}min, J.},
    doi = {10.1214/aop/1048516529},
    file = {:D$\backslash$:/Papers/Jean Jacod/On asymptotic errors in discretization of processes.pdf:pdf},
    issn = {00911798},
    journal = {Ann. Probab.},
    keywords = {Discretization,L\'{e}vy processes,Semimartingales},
    number = {2},
    pages = {592-608},
    title = {{On asymptotic errors in discretization of processes}},
    volume = {31},
    year = {2003}
}

@article{Jacod2004,
    archivePrefix = {arXiv},
    arxivId = {arXiv:math/0410118v1},
    author = {Jacod, J.},
    doi = {10.1214/009117904000000667},
    eprint = {0410118v1},
    file = {:D$\backslash$:/Papers/Jean Jacod/Euler scheme for L\'evy driven stochastic differential equations- limit theorems.pdf:pdf},
    issn = {00911798},
    journal = {Ann. Probab.},
    keywords = {Euler scheme,L\'evy process,Rate of convergence},
    number = {3 A},
    pages = {1830-1872},
    primaryClass = {arXiv:math},
    title = {{The Euler scheme for L\'evy driven stochastic differential equations: Limit theorems}},
    volume = {32},
    year = {2004}
}

@article{Jacod2005523,
    title = {{The approximate Euler method for L\'evy driven stochastic differential equations}},
    journal = "Annales de l'Institut Henri Poincare (B) Probability and Statistics ",
    volume = "41",
    number = "3",
    pages = "523-558",
    year = "2005",
    issn = "0246-0203",
    doi = "http://dx.doi.org/10.1016/j.anihpb.2004.01.007",
    url = "http://www.sciencedirect.com/science/article/pii/S024602030500021X",
    author = "Jacod, J. and Kurtz, T. G. and Meleard, S. and Protter, P.",
}

@article{Guyon2006,
    author = {Guyon, J.},
    doi = {10.1016/j.spa.2005.11.011},
    eprint = {0707.1243},
    issn = {03044149},
    journal = {Stoch. Process. their Appl.},
    keywords = {Euler scheme,Rate of convergence,Stochastic differential equation,Tempered distributions},
    number = {6},
    pages = {877-904},
    title = {{Euler scheme and tempered distributions}},
    volume = {116},
    year = {2006}
}

@article{Konakov2011,
    author = {Konakov, V. and Menozzi, S.},
    doi = {10.1007/s10959-010-0291-x},
    eprint = {0810.3224},
    issn = {08949840},
    journal = {J. Theor. Probab.},
    keywords = {Euler scheme,Parametrix,Symmetric stable processes},
    number = {2},
    pages = {454-478},
    title = {{Weak Error for Stable Driven Stochastic Differential Equations: Expansion of the Densities}},
    volume = {24},
    year = {2011}
}

@article{Bally2014,
    author = {Bally, V. and Caramellino, L.},
    doi = {10.1214/EJP.v19-3175},
    file = {:C$\backslash$:/Users/linghuac/Desktop/Lit Review of Non Local/On the distances between PDFs.pdf:pdf},
    issn = {1083-6489},
    journal = {Electron. J. Probab.},
    number = {110},
    title = {{On the distances between probability density functions}},
    url = {http://ejp.ejpecp.org/article/view/3175},
    volume = {19},
    year = {2014},
    pages = {1-33}
}

@book{schechter2001principles,
  title={Principles of Functional Analysis},
  author={Schechter, M.},
  isbn={9780821828953},
  lccn={00103160},
  series={Graduate studies in mathematics},
  url={https://books.google.no/books?id=bjiigCL7T5AC},
  year={2001},
  publisher={American Mathematical Society}
}

@book{zhou2008,
  title={Real Analysis},
  author={Zhou, M.},
  year={2008},
  publisher={Peking University Press}
}

@ARTICLE{linetsky1998path,
    title = {{The Path Integral Approach to Financial Modeling and Options Pricing}},
    author = {Linetsky, V.},
    year = {1998},
    journal = {Computational Economics},
    volume = {11},
    number = {1-2},
    pages = {129-63},
    url = {http://EconPapers.repec.org/RePEc:kap:compec:v:11:y:1998:i:1-2:p:129-63}
}

@INPROCEEDINGS{Rosa-clot99apath,
    author = {Rosa-Clot, M. and Taddei, S.},
    title = {{A path integral approach to derivative security pricing, II: Numerical methods}},
    booktitle = {Cond-Mat/9901279},
    year = {1999}
}

@article{NaessSkaug2007,
    year={2007},
    issn={0927-7099},
    journal={Computational Economics},
    volume={30},
    number={2},
    doi={10.1007/s10614-007-9091-5},
    title={{Fast and accurate pricing of discretely monitored barrier options by numerical path integration}},
    url={http://dx.doi.org/10.1007/s10614-007-9091-5},
    publisher={Springer US},
    keywords={Barrier options; Discrete monitoring; Numerical path integration},
    author={Skaug, C. and Naess, A.},
    pages={143-151},
}

@article{Naess199391,
    title = {{Response statistics of nonlinear, compliant offshore structures by the path integral solution method}},
    author = "A. Naess and J. M. Johnsen",
    journal = "Probabilistic Engineering Mechanics ",
    volume = "8",
    number = "2",
    pages = "91-106",
    year = "1993",
    note = "",
    issn = "0266-8920",
    doi = "http://dx.doi.org/10.1016/0266-8920(93)90003-E",
    url = "http://www.sciencedirect.com/science/article/pii/026689209390003E"
}

@article{Naess2000221,
    title = {{Efficient path integration methods for nonlinear dynamic systems}},
    journal = "Probabilistic Engineering Mechanics ",
    volume = "15",
    number = "2",
    pages = "221-231",
    year = "2000",
    note = "",
    issn = "0266-8920",
    doi = "http://dx.doi.org/10.1016/S0266-8920(99)00031-4",
    url = "http://www.sciencedirect.com/science/article/pii/S0266892099000314",
    author = "Naess, A. and Moe, V."
}

@article{Yu20041493,
    title = {{Numerical path integration of a non-homogeneous Markov process}},
    journal = "International Journal of Non-Linear Mechanics ",
    volume = "39",
    number = "9",
    pages = "1493-1500",
    year = "2004",
    note = "Themes in Non-linear Stochastic Dynamics ",
    issn = "0020-7462",
    doi = "http://dx.doi.org/10.1016/j.ijnonlinmec.2004.02.011",
    url = "http://www.sciencedirect.com/science/article/pii/S0020746204000186",
    author = "Yu, J. S. and Lin, Y. K."
}

@PHDTHESIS{Mo2008phd,
    author = "Mo, E.",
    title = {{Nonlinear Stochastic Dynamics and Chaos by Numerical Path Integration}},
    school = "Norwegian University of Science and Technology",
    year = "2008"
}

@book{protter2005stochastic,
      title={Stochastic Integration and Differential Equations},
      author={Protter, P.},
      isbn={9783540003137},
      lccn={2003059169},
      series={Stochastic Modelling and Applied Probability},
      url={https://books.google.no/books?id=mJkFuqwr5xgC},
      year={2005},
      publisher={Springer Berlin Heidelberg}
}

@book{magnus1999matrix,
  title={Matrix Differential Calculus with Applications in Statistics and Econometrics},
  author={Magnus, J.R. and Neudecker, H.},
  isbn={9780471986331},
  lccn={98053556},
  series={Wiley Series in Probability and Statistics: Texts and References Section},
  url={https://books.google.no/books?id=0CXXdKKiIpQC},
  year={1999},
  publisher={Wiley}
}

@article{AlibaudCJ2012,
    author = {Alibaud, N. and Cifani, S. and Jakobsen, E. R.},
    title = {{Continuous Dependence Estimates for Nonlinear Fractional Convection-diffusion Equations}},
    journal = {SIAM Journal on Mathematical Analysis},
    volume = {44},
    number = {2},
    pages = {603-632},
    year = {2012},
    doi = {10.1137/110834342},
}

@book{nualart2006malliavin,
  title={{The Malliavin Calculus and Related Topics}},
  author={Nualart, D.},
  isbn={9783540283287},
  lccn={94048195},
  series={Probability and Its Applications},
  url={https://books.google.no/books?id=r-2XRCAihmEC},
  year={2006},
  publisher={Springer}
}

@article{bally2015,
    author = "Bally, V. and Kohatsu-Higa, A.",
    doi = "10.1214/14-AAP1068",
    journal = "Ann. Appl. Probab.",
    month = "12",
    number = "6",
    pages = "3095-3138",
    publisher = "The Institute of Mathematical Statistics",
    title = {{A probabilistic interpretation of the parametrix method}},
    url = "http://dx.doi.org/10.1214/14-AAP1068",
    volume = "25",
    year = "2015"
}

@article{Bodnarchuk2008cond,
    year={2008},
    journal={Teor. Imovir. Mat. Statyst.},
    issue={79},
    title={{Conditions for the existence and smoothness of
        the distribution density for the Ornstein-Uhlenbeck process with L\'evy noise}},
    author={Bodnarchuk, S. V. and Kulik, A. M.},
    pages={21-35}
}

@article{Cass20091416,
    title = {{Smooth densities for solutions to stochastic differential equations with jumps}},
    journal = "Stochastic Processes and their Applications ",
    volume = "119",
    number = "5",
    pages = "1416-1435",
    year = "2009",
    note = "",
    issn = "0304-4149",
    doi = "http://dx.doi.org/10.1016/j.spa.2008.07.005",
    url = "http://www.sciencedirect.com/science/article/pii/S0304414908001191",
    author = "Cass, T.",
    keywords = "Jump diffusions",
    keywords = "Malliavin calculus",
    keywords = "Semimartingales "
}

@article{Komatsu2001,
    title = {{On the Smoothness of PDF of Solutions to SDE of Jump Type}},
    journal = {{International Journal of Differential Equations and Applications}},
    volume = {2},
    number = {2},
    pages = {141-197},
    year = {2001},
    author = "Komatsu, T. and Takeuchi, A."
}

@article{zhang2014,
    author = "Zhang, X.",
    doi = "10.1214/13-AOP900",
    journal = "Annals of Probability",
    month = "09",
    number = "5",
    pages = "1885-1910",
    publisher = "The Institute of Mathematical Statistics",
    title = {{Densities for SDEs driven by degenerate $\alpha$-stable processes}},
    url = "http://dx.doi.org/10.1214/13-AOP900",
    volume = "42",
    year = "2014"
}

@article{zhang2015,
    author = "Song, Y. and Zhang, X.",
    journal = "Electron. J. Probab",
    volume = "20",
    number = "21",
    pages = "1-27",
    title = {{Regularity of density for SDEs driven by degenerate L\'evy noises}},
    year = "2015"
}

@ARTICLE{Hagen2002option,
    title = {{Option pricing from path integral for non-Gaussian fluctuations.
        Natural martingale and application to truncated L\'evy distributions}},
    author = {Kleinert, H.},
    year = {2002},
    journal = {Physica A: Statistical Mechanics and its Applications},
    volume = {312},
    number = {1},
    pages = {217-242},
}

@book{kyprianou2006exotic,
  title={{Exotic Option Pricing and Advanced L\'evy Models}},
  author={Kyprianou, A. and Schoutens, W. and Wilmott, P.},
  isbn={9780470017203},
  url={https://books.google.no/books?id=ZUF\_aN4VQWsC},
  year={2006},
  publisher={Wiley}
}

@article{BogachevKrylov2001,
    author = {Bogachev, V. I.  and  Krylov, N. V. and Rockner, M.},
    title = {{On Regularity of Transition Probabilities
        and Invariant Measures of Singular Diffusions under Minimal Conditions}},
    journal = {Communications in Partial Differential Equations},
    volume = {26},
    number = {11-12},
    pages = {2037-2080},
    year = {2001},
    doi = {10.1081/PDE-100107815},
}

@article{Kohatsu2014,
    title= {{Optimal simulation schemes for L\'evy driven stochastic differential equations}},
    Author= {Kohatsu-Higa, A. and Ortiz-Latorre, S. and Tankov, P.},
    Journal= {Math. Comp.},
    volume={83},
    year={2014},
    pages ={ 2293-2324},
}

@incollection{Kohatsu2013survey,
  author      = "Kohatsu-Higa, A. and Ngo, H.-L.",
  title       = {{Weak Approximations for SDE¡¯s Driven by L\'evy Processes}},
  editor      = "Dalang, R.C. and Dozzi, M. and Russo, F.",
  booktitle   = "Seminar on Stochastic Analysis, Random Fields and Applications VII:
        Centro Stefano Franscini, Ascona, May 2011",
  publisher   = "Springer ",
  address     = "Basel",
  year        = 2013,
}

@article{OptPrice2013,
    journal = {{OpenGamma Quantitative Research}},
    title ={{ Option Pricing with Fourier Methods}},
    author ={White, R.},
    number ={7},
    year ={2013},
    pages = {1-20}
}

@MISC{Carr_optionvaluation,
    author = {Carr, P. and Madan, D. B.},
    title = {{Option valuation using the fast Fourier transform}},
    journal = {Journal of Computational Finance},
    volume = {2},
    number = {4},
    year = {1999},
    pages ={61-73}
}

@article{BrisLions2008,
    Journal={{Communications in Partial Differential Equations}},
    title={{Existence and Uniqueness of Solutions to Fokker¨CPlanck Type Equations with Irregular Coefficients}},
    author= {Le Bris, C. and Lions, P.-L. },
    year ={2008},
    volume={33},
    pages={1272-1317}
}

@article{Borzi2014,
    author={Mohammadi, M. and Borzi, A.},
    title={{ Analysis of the Chang-Cooper Discretization Scheme for a Class of Fokker-Planck Equations}},
    journal={{Journal of Numerical Mathematics}},
    year={ 2015},
    volume={22},
    pages={271-288}
}

@article{Annunziato2013487,
    title = {{A Fokker-Planck control framework for multidimensional stochastic processes}},
    journal = "Journal of Computational and Applied Mathematics ",
    volume = "237",
    number = "1",
    pages = "487-507",
    year = "2013",
    author = "Annunziato, M."
}

@book{tankov2003financial,
  title={Financial Modelling with Jump Processes},
  author={Cont, R. and Tankov, P.},
  isbn={9780203485217},
  lccn={2003063470},
  series={Chapman and Hall/CRC Financial Mathematics Series},
  url={https://books.google.fr/books?id=bVlieV8GBrIC},
  year={2003},
  publisher={CRC Press}
}

@article{Benth2001OptPort,
    year={2001},
    issn={0949-2984},
    journal={Finance and Stochastics},
    volume={5},
    number={4},
    doi={10.1007/s007800000032},
    title={{Optimal portfolio management rules in a
            non-Gaussian market with durability and intertemporal substitution}},
    url={http://dx.doi.org/10.1007/s007800000032},
    publisher={Springer-Verlag},
    author={Benth, F. E. and Karlsen, K. H. and Reikvam, K.},
    pages={447-467}
}

@book{mikosch2009non,
  title={Non-Life Insurance Mathematics: An Introduction with the Poisson Process},
  author={Mikosch, T.},
  isbn={9783540882336},
  lccn={2008943236},
  series={Universitext},
  url={https://books.google.fr/books?id=yOgoraGQhGEC},
  year={2009},
  publisher={Springer Berlin Heidelberg}
}

@article{CaiYang2014,
    year={2014},
    issn={0254-5330},
    journal={Annals of Operations Research},
    volume={212},
    number={1},
    doi={10.1007/s10479-011-1032-y},
    title={On the decomposition of the absolute ruin probability
        in a perturbed compound Poisson surplus process with debit interest},
    url={http://dx.doi.org/10.1007/s10479-011-1032-y},
    publisher={Springer US},
    author={Cai, J. and Yang, H.},
    pages={61-77}
}

@article{GimbertLions1984,
    year={1984},
    journal={Ricerche di Matematica},
    volume={33},
    number={2},
    title={Existence and regularity results for solutions of second-order elliptic integro-differential operators},
    publisher={Springer Milan},
    author={Gimbert, F. and Lions, P.-L.},
    pages={315-358},
}

    @article{Bony1967,
    year={1967},
    journal={Comptes Rendus Acad. Sci. Paris, S\'erie A},
    volume={265},
    title={{Principe du maximum dans les espaces de Sobolev}},
    author={Bony, J.-M.},
    pages={333-336},
    language={French}
}

@article{Lions1983,
    year={1983},
    journal={Proc. Amer. Math. Soc.},
    volume={88},
    title={{A remark on Bony maximum principle}},
    author={P.-L. Lions},
    pages={503-508}
}

@article{hiraba1992,
    author = "Hiraba, S.",
    doi = "10.2996/kmj/1138039525",
    journal = "Kodai Math. J.",
    number = "1",
    pages = "28--49",
    publisher = "Tokyo Institute of Technology, Department of Mathematics",
    title = {{Existence and smoothness of transition density for jump-type Markov processes:
        applications of Malliavin calculus}},
    url = "http://dx.doi.org/10.2996/kmj/1138039525",
    volume = "15",
    year = "1992"
}

@article{BotSch2011,
    author = { B\"ottcher,  B. and Schnurr, A.},
    title = {{The Euler Scheme for Feller Processes}},
    journal = {Stochastic Analysis and Applications},
    volume = {29},
    number = {6},
    pages = {1045-1056},
    year = {2011},
    doi = {10.1080/07362994.2011.610167}
}

@PHDTHESIS{Jum2015PhD,
    author = "Jum, E.",
    title = {{Numerical Approximation of Stochastic Differential Equations Driven
        by L\'evy Motion with Infinitely Many Jumps}},
    school = "University of Tennessee, Knoxville",
    year = "2015"
}

@article{ZhangBao2013,
    author = {Zhang, W. and Bao, J.},
    title = {{Regularity of Very Weak Solutions for Nonhomogeneous Elliptic Equation}},
    journal = {Communications in Contemporary Mathematics},
    volume = {15},
    number = {04},
    pages = {1350012},
    year = {2013},
    doi = {10.1142/S0219199713500120}
}

@article{Zhang20121867,
    author = {Zhang, W. and Bao, J.},
    title = {{Regularity of Very Weak Solutions for Elliptic Equation of Divergence Form}},
    journal = "Journal of Functional Analysis ",
    volume = "262",
    number = "4",
    pages = "1867 - 1878",
    year = "2012",
    note = "",
    issn = "0022-1236"
}

@article{Kolokoltsov2000,
    author = {Kolokoltsov, V.},
    title = {{Symmetric Stable Laws and Stable-Like Jump-Diffusions}},
    journal = {Proceedings of the London Mathematical Society},
    volume = {80},
    issue = {03},
    month = {5},
    year = {2000},
    issn = {1460-244X},
    pages = {725--768},
    numpages = {44},
    doi = {null},
    URL = {http://journals.cambridge.org/article_S0024611500012314},
}

@book{abels2012pseudodifferential,
  title={{Pseudodifferential and Singular Integral Operators: An Introduction with Applications}},
  author={Abels, H.},
  isbn={9783110250312},
  lccn={2011041884},
  series={De Gruyter Textbook},
  url={https://books.google.no/books?id=WDsvbL6tvGcC},
  year={2012},
  publisher={De Gruyter}
}

@book{oksendal2007,
  title={{Applied Stochastic Control of Jump Diffusions}},
  author={{\O}ksendal, B. and Sulem, A.},
  isbn={9783540698265},
  series={Universitext},
  url={https://books.google.no/books?id=ALHJxprw4ksC},
  year={2007},
  publisher={Springer Berlin Heidelberg}
}

@article{BallyTalayII1995,
    author = {Bally, V. and Talay, D.},
    title = {{The law of the Euler scheme for stochastic differential equations (II):
            convergence rate of the density}},
    year = {1996},
    journal = {Monte Carlo Methods Appl.},
    volume = {2},
    pages = {93-128},
    number = {2},
}

@article{Mikulevicius2012,
    title = {{On the rate of convergence of simple and jump-adapted weak Euler schemes for L\'evy driven SDEs}},
    journal = "Stochastic Processes and their Applications ",
    volume = "122",
    number = "7",
    pages = "2730 - 2757",
    year = "2012",
    author = "Mikulevicius, R."
}

@article{tanaka2009,
    author = "Tanaka, H. and Kohatsu-Higa, A.",
    doi = "10.1214/08-AAP568",
    fjournal = "The Annals of Applied Probability",
    journal = "Ann. Appl. Probab.",
    month = "06",
    number = "3",
    pages = "1026--1062",
    publisher = "The Institute of Mathematical Statistics",
    title = {{An operator approach for Markov chain weak approximations
        with an application to infinite activity L\'evy driven SDEs}},
    url = "http://dx.doi.org/10.1214/08-AAP568",
    volume = "19",
    year = "2009"
}

@article{Kelbert20161145,
    title = {{Weak error for Continuous Time Markov Chains related to
        fractional in time P(I)DEs}},
    author = "Kelbert, M. and Konakov, V. and Menozzi, S.",
    journal = "Stochastic Processes and their Applications ",
    volume = "126",
    number = "4",
    pages = "1145 - 1183",
    year = "2016",
}

@article{bally2016,
    author = "Bally, V. and Rey, C.",
    doi = "10.1214/16-EJP4079",
    fjournal = "Electronic Journal of Probability",
    journal = "Electron. J. Probab.",
    pages = "1-44",
    number = "12",
    publisher = "The Institute of Mathematical Statistics and the Bernoulli Society",
    title = {{Approximation of Markov semigroups in total variation distance}},
    url = "http://dx.doi.org/10.1214/16-EJP4079",
    volume = "21",
    year = "2016"
}

@article{wo2001,
    author = "Woyczynski, W.",
    journal = "L\'evy processes",
    pages = "241-266",
    publisher = "Birkh{\"a}user, Boston.",
    title = {{L\'evy processes in the physical sciences}},
    year = "2001"
}

@book {GS1972,
    AUTHOR = {G{\={\i}}hman, {\u{I}}. {\=I}. and Skorohod, A. V.},
     TITLE = {Stochastic differential equations},
      NOTE = {Translated from the Russian by Kenneth Wickwire,
              Ergebnisse der Mathematik und ihrer Grenzgebiete, Band 72},
 PUBLISHER = {Springer-Verlag, New York-Heidelberg},
      YEAR = {1972},
}

@book {LevyBook2001,
     TITLE = {L\'evy processes},
    EDITOR = {Barndorff-Nielsen, O. E. and Mikosch, T. and Resnick, S. I.},
      NOTE = {Theory and applications},
 PUBLISHER = {Birkh\"auser Boston, Inc., Boston, MA},
      YEAR = {2001},
}

@article{AR01,
    author = "Asmussen, S. and Rosinski, J.",
    journal = "J. Appl. Probab.",
    volume = "38",
    number = "2",
    pages = "482-493",
    publisher = "Birkh{\"a}user, Boston.",
    title = {{Approximations of small jumps of L\'evy processes with a view
towards simulation.}},
    year = "2001"
}

@Article{Wang2013,
    author="Wang, J.",
    title={{Sub-Markovian $C_0$-Semigroups Generated by Fractional Laplacian with Gradient Perturbation}},
    journal="Integral Equations and Operator Theory",
    year="2013",
    volume="76",
    number="2",
    pages="151-161",
    issn="1420-8989",
    doi="10.1007/s00020-013-2055-3",
    url="http://dx.doi.org/10.1007/s00020-013-2055-3"
}

@ARTICLE{Fornaro2007747,
    author={Fornaro, S. and Lorenzi, L.},
    title={{Generation results for elliptic operators with unbounded diffusion coefficients in Lp- and Cb-spaces}},
    journal={Discrete and Continuous Dynamical Systems},
    year={2007},
    volume={18},
    number={4},
    pages={747-772},
    document_type={Article},
    source={Scopus},
}

@article {Qi14,
    AUTHOR = {Qiao, H.},
     TITLE = {Euler-{M}aruyama approximation for {SDE}s with jumps and
              non-{L}ipschitz coefficients},
   JOURNAL = {Osaka J. Math.},
  FJOURNAL = {Osaka Journal of Mathematics},
    VOLUME = {51},
      YEAR = {2014},
    NUMBER = {1},
     PAGES = {47-66},
}

@article {KM00,
    AUTHOR = {Konakov, V. and Mammen, E.},
     TITLE = {Local limit theorems for transition densities of {M}arkov
              chains converging to diffusions},
   JOURNAL = {Probab. Theory Related Fields},
  FJOURNAL = {Probability Theory and Related Fields},
    VOLUME = {117},
      YEAR = {2000},
    NUMBER = {4},
     PAGES = {551-587},
}

@phdthesis{LC2016,
    AUTHOR ={Chen, L.},
    TITLE = {The Numerical Path Integration Method for Stochastic Differential Equations},
    SCHOOL = {NTNU Norwegian University of Science and Technology},
    YEAR  = {2016},
}

@book{BKRS,
    AUTHOR = {Bogachev, V. I. and Krylov, N. V. and R\"ockner, M. and Shaposhnikov, S. V.},
     TITLE = {Fokker-{P}lanck-{K}olmogorov equations},
    VOLUME = {207},
 PUBLISHER = {American Mathematical Society, Providence, RI},
      YEAR = {2015},
}

@article{Chxz17,
  author={Chen, Z.-Q. and Hu, E. and Xie, L and Zhang, X.},
  title={Heat kernels for non-symmetric diffusion operators with jumps},
  journal={J. Differential Equations},
  volume={263},
  number={10},
  year={2017},
  pages={6576-6634},
}

@article{Ar68,
  author={Aronson, D. G.},
  title={Non-negative solutions of linear parabolic equations},
  journal={Ann. Scuola Norm. Sup. Pisa},
  voluem={22},
  number={3},
  pages={607-694},
  year={1968},
}

@article{BL02,
  author={Bass, R. F.  and  Levin, D. A.},
  title={Transition probabilities for symmetric
jump processes},
  journal={Trans. Amer. Math. Soc.},
  voluem={354},
  number={7},
  pages={2933-2953},
  year={2002},
}

@article{BJ07,
  author={Bogdan, K. and Jakubowski, T.},
  title={Estimates of heat kernel of fractional Laplacian perturbed by gradient opera- tors},
  journal={Comm. Math. Phys.},
  voluem={271},
  number={1},
  pages={179-198},
  year={2007},
}

@article{CK10,
  author={Chen, Z.-Q. and Kumagai, T. },
  title={A priori H\"older estimate, parabolic Harnack principle
        and heat kernel estimates for diffusions with jumps.},
  journal={Rev. Mat. Iberoam.},
  voluem={26},
  number={2},
  pages={551-589},
  year={2010},
}

\end{filecontents}

\bibliographystyle{amsplain}
\bibliography{Levy-PI}

\end{document}